\newtheorem{thm}{Theorem}
\newtheorem{prop}[thm]{Proposition}
\newtheorem{coro}[thm]{Corollary}
\newtheorem{lem}[thm]{Lemma}
\theoremstyle{remark}
\newtheorem*{ex}{Example}
\newtheorem*{rem}{Remark}
\newtheorem{remnum}[thm]{Remark}
\newtheorem*{rems}{Remarks}
\theoremstyle{definition}
\newtheorem{df}[thm]{Definition}
\newtheorem{exnum}[thm]{Example}
\DeclareMathOperator{\Diag}{Diag}
\DeclareMathOperator{\Id}{Id}
\DeclareMathOperator{\Conv}{Conv}
\newcommand{\fonction}[5]{
 \begin{array}{rrcl}
	#1: & #2 & \longrightarrow & #3 \\
    & #4 & \longmapsto & #5 \end{array}
    }
\newcommand{\RR}{\mathbb{R}}
\newcommand{\Pbf}{\mathbf{P}}
\newcommand{\Cbf}{\mathbf{C}}
\newcommand{\Mbf}{\mathbf{M}}
\newcommand{\bbf}{\mathbf{b}}
\newcommand{\Pibf}{\boldsymbol{\Pi}}
\newcommand{\Mbft}{\tilde{\mathbf{M}}}
\newcommand{\bbft}{\tilde{\mathbf{b}}}
\newcommand{\lb}{\llbracket}
\newcommand{\rb}{\rrbracket}
\newcommand{\One}{\mathbf{1}}
\title{Parameter identifiability of a deep feedforward ReLU neural network}
\date{\today}
\author[1]{Joachim Bona-Pellissier}
\author[2]{Fran\c{c}ois Bachoc}
\author[2]{Fran\c{c}ois Malgouyres}
\affil[1]{Institut de Math\'ematiques de Toulouse, UMR5219. Universit\'e de Toulouse, CNRS. UT1, F-31042  Toulouse, France}
\affil[2]{Institut de Math\'ematiques de Toulouse, UMR5219. Universit\'e de Toulouse, CNRS. UPS IMT, F-31062 Toulouse Cedex 9, France}
\begin{document}

\maketitle

\abstract{
The possibility for one to recover the parameters --weights and biases-- of a neural network thanks to the knowledge of its function on a subset of the input space can be, depending on the situation, a curse or a blessing. On one hand, recovering the parameters allows for better adversarial attacks and could also disclose sensitive information from the dataset used to construct the network. On the other hand, if the parameters of a network can be recovered, it guarantees the user that the features in the latent spaces can be interpreted. It also provides foundations to obtain formal guarantees on the performances of the network.

It is therefore important to characterize the networks whose parameters can be identified and those whose parameters cannot.

In this article, we provide a set of conditions on a deep fully-connected feedforward ReLU neural network under which the parameters of the network are uniquely identified --modulo permutation and positive rescaling-- from the function it implements on a subset of the input space.
}
\bigskip

{\bf Keywords:} ReLU networks, Equivalent parameters, Symmetries, Parameter recovery, Deep Learning



\section{Introduction}

The development of Machine Learning and in particular of Deep Learning in the last decade has led to many breakthroughs in fields such as image classification \cite{krizhevsky2012imagenet}, object recognition \cite{redmon2016you,ren2015faster}, speech recognition \cite{6296526,sak2014long,hannun2014deep}, natural language processing \cite{DBLP:journals/corr/abs-1301-3781,mikolov2010recurrent,kalchbrenner2013recurrent}, anomaly detection \cite{pinto2021deep} or climate sciences \cite{adewoyin2021tru}. Deep neural networks are now widely used in real-life tasks stemming from those fields and beyond. This development and the diversity of contexts in which neural networks are used require to investigate theoretical properties that permit to guarantee that they can be used safely, are robust to attack, and can be used widely without giving access to sensitive information.

One key problem in these regards is the relation between the parameters and the function implemented by the network. If a parameterization of a network uniquely defines a function, the reverse is not true. Which other parameterizations define the same function, and what do they have in common? Which information on the parameters of a network are we able to infer from the knowledge of its function on a given domain? Addressing these questions is important for different reasons: industrial property, privacy, robustness and efficiency guarantee (see Section \ref{relwork-sec} for further discussions and references).

In this article, we consider fully-connected feedforward neural networks with $K$ layers, $K \geq 2$, with the ReLU activation function (see Section \ref{nn-sec} for details). The weights and bias parameterizing a neural network are gathered in a list $\Mbf$ of matrices and a list $\bbf$ of vectors. The corresponding function is denoted\footnote{For clarity of the proofs, we index the layers from $K$ (input) to $0$ (output). The input layer is not counted hence the `$K$ layers'.} $f_{\Mbf,\bbf}:\RR^{n_K} \longrightarrow \RR^{n_0}$. We say that two parameterizations $(\Mbf,\bbf)$ and $(\Mbft,\bbft)$ are \textit{equivalent} if they can be deduced from each other by the permutation of neurons in each hidden layer and by positive rescaling between the inward and outward weights of every neuron of every hidden layer. These two operations, that are precisely defined in Definition \ref{equivalence def-main}, are well-known in the literature \cite{pourzanjani2017improving, petzka2020symmetries,phuong2020functional,pmlr-v119-rolnick20a,stock:tel-03208517} and will be referred to as `permutation and positive rescaling'. As is well known and restated for completeness in Proposition \ref{equivalent networks have same function-main}, if two parameterizations $(\Mbf,\bbf)$ and $(\Mbft,\bbft)$ are equivalent, then the corresponding networks implement the same function: for all $x\in\RR^{n_K}$, $f_{\Mbf,\bbf}(x) = f_{\Mbft,\bbft}(x)$. In other words, parameter equivalence implies \textit{functional equivalence} of the networks.

The main contribution of this article is an {\em identifiablity statement} (see Theorem \ref{main theorem-main}) which establishes a `weak' converse of this statement. We consider a set $\Omega\subset \RR^{n_K}$ and two parameterizations $(\Mbf,\bbf)$ and $(\Mbft,\bbft)$ sharing the same architecture (number of layers and of neurons per layer). We establish a {\em sufficient condition} $\Pbf$ such that, if for all $x\in\Omega$, $f_{\Mbf,\bbf}(x) = f_{\Mbft,\bbft}(x)$ and the condition $\Pbf$ is met, then the two parameterizations $(\Mbf,\bbf)$ and $(\Mbft,\bbft)$ are equivalent. The motivation for the introduction of the set $\Omega$ is that, in practice, we may only test the values of $f_{\Mbf,\bbf}$ and $f_{\Mbft,\bbft}$ on a subset of $\RR^{n_K}$. Typically, $\Omega$ is a subset of the support of the input distribution law. Such a setting also allows to show that two networks which coincide on a given domain actually coincide on the whole input space $\mathbb{R}^{n_K}$. Indeed, if the functions implemented by the networks coincide on $\Omega$ and if the sufficient condition $\Pbf$ is satisfied, then the parameters are equivalent and thus by Proposition \ref{equivalent networks have same function-main} the functions also coincide on the rest of the input space $\RR^{n_K}$. This can be useful to bound the generalization error.

We also reformulate this identifiability statement (see Corollary \ref{risk minimization formulation-main}) in a way that illustrates its interest with regard to risk minimization. The corollary considers a random variable $X$ generating the input and an output of the form $Y = f_{\Mbf,\bbf}(X)$, for some parameters $(\Mbf,\bbf)$. It states that, when the condition $\Pbf$ is met, any estimated neural network $(\Mbft,\bbft)$ for which the population risk equals $0$  belongs to the equivalence class of $(\Mbf,\bbf)$. In words, the only way to have a perfect prediction is to perfectly recover $(\Mbf,\bbf)$, up to permutation and positive rescaling.

We describe the related works in Section \ref{relwork-sec}. In addition to the works providing identifiability, stability or stable recovery statements, we give a few pointers on privacy, robustness and guarantees of efficiency that motivate our study from an applied perspective. We define in Section \ref{nn-sec} the considered neural networks and provide the (known) properties that are useful in our context. The  sufficient condition $\Pbf$ and the main theorems are in Section \ref{main-result-sec}. The sketch of the proofs is in Section \ref{sketch-sec} and the details are in the Appendix.

\section{Related work}\label{relwork-sec}

\subsection{Identifiability, stability and stable recovery}
\subsubsection{Identifiability}

Identifiability of the parameters of neural networks has been the topic of a fair amount of work.  For smooth activation functions, some results were already established in the 1990s. For shallow networks, results exist for activation functions amongst which $\mathrm{tanh}$ \cite{sussmann1992uniqueness, albertini1993uniqueness}, the logistic sigmoid \cite{kuurkova1994functionally}, or the Gaussian and rational functions \cite{kainen1994uniqueness}. For deep networks, \cite{fefferman1994reconstructing} shows that with $\mathrm{tanh}$ as activation function, with only a few generic conditions on the parameters, two networks that implement the same function have the same architecture and the same parameters up to some permutations and sign-flip operations.

In the case of ReLU networks, we have seen that two operations are well known to preserve the function implemented by the network: permutation and positive rescaling. These operations define equivalence classes on the set of parameters, and we can at best identify the parameters of a network up to these equivalences. It is shown in \cite{phuong2020functional} that these operations are the only generic operations of this kind for ReLU networks with nonincreasing number of neurons per layer. Indeed, they show that for any fully-connected ReLU network architecture with nonincreasing number of neurons per layer, for any nonempty open set $\Omega$, there exists a parameterization $(\Mbf,\bbf)$ such that for any other parameterization $(\tilde{\Mbf}, \tilde{\bbf})$ satisfying some generic assumption, if $f_{\tilde{\Mbf}, \tilde{\bbf}}$ coincides with $f_{\Mbf,\bbf}$ on $\Omega$, then $(\tilde{\Mbf}, \tilde{\bbf})$ in the equivalence class of $(\Mbf,\bbf)$.

In this work, in order to establish identifiability, we take advantage of the piecewise linear geometry of the functions implemented by ReLU networks to identify the parameters. Indeed, it is well known that the function defined by a deep ReLU network is continuous piecewise-linear, i.e. we can partition the input space into polyhedral regions, sometimes called `linear regions', over which the function is affine. These regions are separated by boundaries that are made of pieces of hyperplanes and that correspond to the non differentiabilities of the function. One crosses such a boundary when the pre-activation value of a neuron (before applying the ReLU function) changes sign. By observing the boundary, one can infer information about the weights and bias of the said neuron.

Other articles adopt similar strategies for shallow \cite{petzka2020symmetries} or deep networks \cite{pmlr-v119-rolnick20a, phuong2020functional, stock:tel-03208517, stock2022embedding}. The specificity of our proof is to proceed by induction, identifying the weights and bias layer after layer. We discuss the differences between our condition $\mathbf{P}$ and the sufficient conditions given in \cite{phuong2020functional,pmlr-v119-rolnick20a,stock2022embedding} in detail in Section \ref{comparisonexisting-sec}.

In the case of shallow ReLU networks, \cite{petzka2020symmetries} establishes a sufficient condition on the parameters for identifiability. If the condition is satisfied by two two-layer fully-connected feedforward ReLU networks whose functions coincide on all the input space, then the parameters of one network can be obtained from the parameters of the other network by permutation and positive rescaling.

In the case of deep ReLU networks, \cite{pmlr-v119-rolnick20a} gives a sufficient condition to be able to reconstruct the architecture, weights and biases of a deep ReLU network by knowing its input-output map on all the input space. The condition concerns the boundaries mentioned above: for each neuron in a hidden layer, the authors define the boundary associated to the neuron as the points at which the pre-activation value of the neuron is zero.  Then, the condition requires each boundary associated to a neuron in a layer $k$ to intersect the boundaries associated to all the neurons in layer $k+1$ and $k-1$ (see Section \ref{comparisonexisting-sec} for more details).

Another kind of property is local identifiability, which is identifiability of a parameter $(\Mbf,\bbf)$ amongst a set of parameters that are close to $(\Mbf,\bbf)$. \cite{stock2022embedding} studies this property for shallow and deep networks. For a deep ReLU network, it first shows that under a trivial assumption, general identifiability up to permutation and positive rescaling implies local identifiability up to positive rescaling, and that the non-existence of `twin' neurons is necessary to identifiability and local identifiability. Then, \cite{stock2022embedding} makes a breakthrough by giving an abstract necessary and sufficient condition on $\mathbf{(M,b)}$ such that there exists a well-chosen \textit{finite} set $\Omega$ from which local identifiability holds up to positive rescalings, and it gives a bound on the size of the set. 
Furthermore, more recently, \cite{bonalocal} provided a numerically testable condition for local identifiability also from a finite set $\Omega$.

Finally, another line of work that can be linked to identifiability is the field of lossless compression of neural networks \cite{serra2020lossless, serra2021scaling}.

\subsubsection{Inverse stability and stable recovery}

Establishing identifiability properties is a first step towards establishing inverse stability properties and studying stable recovery algorithms. Given a norm between functions, we say that inverse stability holds when the proximity of the functions implemented by two networks with the same architecture implies the proximity of the corresponding parameters -up to equivalences of parameters, for instance permutations and positive rescalings in the case of ReLU networks. Inverse stability is a stronger property than identifiability, and is necessary for stable recovery algorithms, which goal is to practically recover the parameters of a network from its function. 

Inverse stability does not hold in general with the uniform norm for fully-connected feedforward neural networks. Indeed, \cite{petersen2020topological} shows that for any depth, for any architecture with at least 3 neurons in the first hidden layer and any practically used activation function, there exists a sequence of networks whose function tends uniformly to $0$ while any parameterization of these networks tends to infinity.

Many inverse stability and stable recovery results already exist for shallow networks. \cite{berner2019degenerate} studies inverse stability directly up to functional equivalence classes, without specifying the nature of these classes in terms of parameters -which interests us in this paper. The authors show that inverse stability has interesting implications in terms of optimization, allowing to link the minima in the parameter space to minima in the realization space (the space of all the functions that can be implemented by a network) and to estimate the quality of local minima in the parameter space based on their radii. Referring to the counter-example given by \cite{petersen2020topological}, the authors of \cite{berner2019degenerate} argue that the Sobolev norm is more suited than the uniform norm to the problem of inverse stability. With this norm, they concretely establish an inverse stability result on shallow ReLU networks without bias, under a few conditions on the parameters. 

When it comes to stable recovery algorithms, \cite{fu2020guaranteed} provides a sample complexity under which one can recover the parameters of a shallow network with sigmoid activation function using cross-entropy as a loss. For shallow fully-connected ReLU networks, without bias and with Gaussian input, \cite{ge2018learning, zhang2019learning, zhong2017recovery, zhou2021local} study the stable recovery of the parameters of a teacher network. They give a sample complexity under which minimizing the empirical risk allows to recover the parameters of the network. \cite{li2017convergence} studies the same configuration but with an identity mapping that skips one layer. ReLU networks can also be used to recover a network with absolute value as activation function \cite{li2020learning}. In fact, a neuron with absolute value can be seen as a sum of two ReLU neurons.

Some results also exist in the case of shallow convolutional networks. 
\cite{brutzkus2017globally, zhang2020improved, zhang2020guaranteed, du2018whenis} establish stable recovery results for convolutional ReLU networks with no overlapping. \cite{janzamin2015beating} gives a result in the case of a sigmoidal activation function. The case of convolutional ReLU networks with overlapping is studied in \cite{goel2018learning}.

Stability and stable recovery for \textit{deep} networks is a more complicated question. A few results exist on the subject, but it stays mostly unexplored.

Among them, for deep structured linear networks, \cite{MalgouyresLandsbergITW, MalgouyresLandsberg_long, malgouyres2020stable} use a tensorial lifting technique to  establish inverse stability properties. \cite{MalgouyresLandsbergITW, MalgouyresLandsberg_long} establish necessary and sufficient conditions of inverse stability for a general constraint on the parameters defining the network. \cite{malgouyres2020stable} specializes the analysis to the sparsity constraint on the parameters, and obtains necessary and sufficient conditions of inverse stability.
 
The authors of \cite{arora2014provable} consider deep feed-forward networks with Heavyside activation function which are very sparse and randomly generated. They show that these can be learned with high probability one layer after another.

The authors of \cite{sedghi2014provable} consider a deep feed-forward neural network, with an activation function that can be, inter alia, ReLU, sigmoid or softmax. They show that, if the input is Gaussian or its distribution is known, and if the weight matrix of the first layer is sparse, then a method based on moments and sparse dictionary learning can retrieve it exactly. Nothing is said about the stability or the estimation of the other layers. 

For deep ReLU networks, in the case where one has full access to the function implemented by the network \cite{pmlr-v119-rolnick20a} provides a practical algorithm able to approximately recover the parameters modulo permutation and rescaling, and \cite{carlini2020cryptanalytic} reconstructs a functionally equivalent network, formulating it as a cryptanalytic problem.

Further inverse stability and stable recovery results for deep ReLU networks are still to be established. Studying identifiability for these networks, as we do in this article, is a first step towards this goal.

\subsection{Motivations: privacy, robustness and interpretability}

The generalization of deep networks in various applications such as life style choices or medical diagnosis has raised new concerns about privacy and security. Indeed, to perform well, neural networks need to be trained with many examples. The training of some models can take up to several weeks, and need huge datasets such as ImageNet, which contains millions of images. For instance, the training of the giant GPT-3 neural network costed an estimated 12 millions of dollars \cite{NEURIPS2020_1457c0d6}. For this reason, trained models are valuable and their owners may want to protect them from replication. 

In many applications, the training dataset also contains sensitive information that could be uncovered
\cite{narayanan2008robust, loukides2010disclosure, carlini2019secret, fredrikson2015model, chen2021protect}. It is crucial, to the deployment of the solutions relying on deep networks, to guarantee that this cannot occur. For example, when the system returns a confidence indicator in the prediction or a notion of margin, the {\em Model Inversion Attack} described in \cite{fredrikson2015model} uncovers learning examples $x$ by maximizing the confidence/margin, under a constraint that $\|f_{\Mbf,\bbf}(x) -y\|\leq \varepsilon$, where $y$ is a target output. In moderate dimension, this can be achieved by simply applying $f_{\Mbf,\bbf}$ several times. In large dimension, the complexity of the computation is too large unless the adversary can compute $\nabla f_{\Mbf,\bbf}(x)$, for any $x$. To perform this computation, the adversary needs to know $(\Mbf,\bbf)$. Guaranteeing that the parameters cannot be recovered prevents this. With a slightly different objective, (differential) {\em privacy deep learning} also assumes that the adversary has the knowledge of the network parameters \cite{abadi2016deep}.

Furthermore, knowing the architecture and parameters of a network could make easier for a malicious user to attack it, for instance with adversarial attacks. Indeed, if some black-box adversarial attacks do exist \cite{su2019one, sarkar2017upset, cisse2017houdini}, many of them use the knowledge of the parameters of the network, at least to compute the gradients \cite{szegedy2013intriguing,goodfellow2014explaining, kurakin2016adversarial, papernot2016limitations, carlini2017towards, moosavi2016deepfool, moosavi2017universal, baluja2017adversarial}.

For all these reasons, the authors of \cite{chabanne2020protection} developed a method of preventing parameters extraction by artificially complexifying the network without changing its global behavior. This method builds on previous works on stable recovery of the parameters of the ReLU networks, and in particular on the fact that the piecewise-linear structure of the functions implemented by such networks can be used to recover the parameters. Further understanding of stable recovery for deep networks could help improve protection methods.

Another interest of our work is interpretability of deep neural networks. In some uses of deep networks we want to understand what happens at a layer level and how we can interpret the feature spaces defined by the different layers. But such an interpretation is more meaningful if we know that, for a given function implemented by the network, the parameterization is unique -up to elementary operations such as permutations and positive rescalings for ReLU networks.

\section{Neural networks}\label{nn-sec}

In this section, we provide known definitions and properties of neural networks with ReLU activation functions. For a self-contained reading, all the corresponding proofs are provided in the appendix.

\subsection{Parameterization of neural networks}

We consider deep feedforward ReLU networks with $K \geq 2$ layers. To clarify any ambiguity, note that the input layer is not actually counted, as it does not gather any weights. As evoked in the introduction, we index the layers of a deep neural network in reverse order, from $K$ to $0$, for some $K \geq 2$. The input layer is the layer $K$, the output layer is the layer $0$, and between them are $K-1$ hidden layers. We denote by $n_k \in \mathbb{N}^*$ the number of neurons of the layer $k$. The information contained at the layer $k$ is a $n_k$-dimensional vector.

Let $k \in \llbracket 0, K-1 \rrbracket$. We denote the weights between the layer $k+1$ and the layer $k$ with a matrix $M^{k} \in \mathbb{R}^{n_k \times n_{k+1}}$. We also consider a bias vector $b^{k} \in \mathbb{R}^{n_k}$ at the layer $k$, and the ReLU activation function, that is
$\sigma(x) = \max(x,0)$. By extension, for a vector $x = (x_1, \dots , x_p)^T \in \mathbb{R}^p$ we also write $\sigma(x) = (\sigma(x_1), \dots , \sigma(x_p))^T$. We denote by $h_k$ the mapping implemented by the network between the layer $k+1$ and the layer $k$. If $x \in \mathbb{R}^{n_{k+1}}$ is the information contained at the layer $k+1$, the information contained at the layer $k$ is:
\begin{equation}\label{hk-main} h_k(x) = \begin{cases}\sigma(M^{k} x + b^{k}) & \text{if } k \neq 0 \\M^{k} x + b^{k} & \text{if } k=0.\end{cases}
\end{equation}

The parameters of the network can be summarized in the couple $(\mathbf{M},\mathbf{b})$, where $\mathbf{M} = (M^{0}, M^{1}, \dots , M^{K-1}) \in \mathbb{R}^{n_0 \times n_1} \times \dots \times \mathbb{R}^{n_{K-1} \times n_K}$ and $\mathbf{b} = (b^{0}, b^{1}, \dots , b^{K-1}) \in \mathbb{R}^{n_0} \times \dots \times \mathbb{R}^{n_{K-1}}$. The function implemented by the network is then $f_{\mathbf{M},\mathbf{b}} = h_0 \circ h_1 \circ \dots \circ h_{K-1}$, from $\mathbb{R}^{n_K}$ to $\mathbb{R}^{n_0}$. We refer to Figure \ref{NN representation} for a representation of a neural network and its parameters.

\begin{figure}
\center
\includegraphics[scale=0.8]{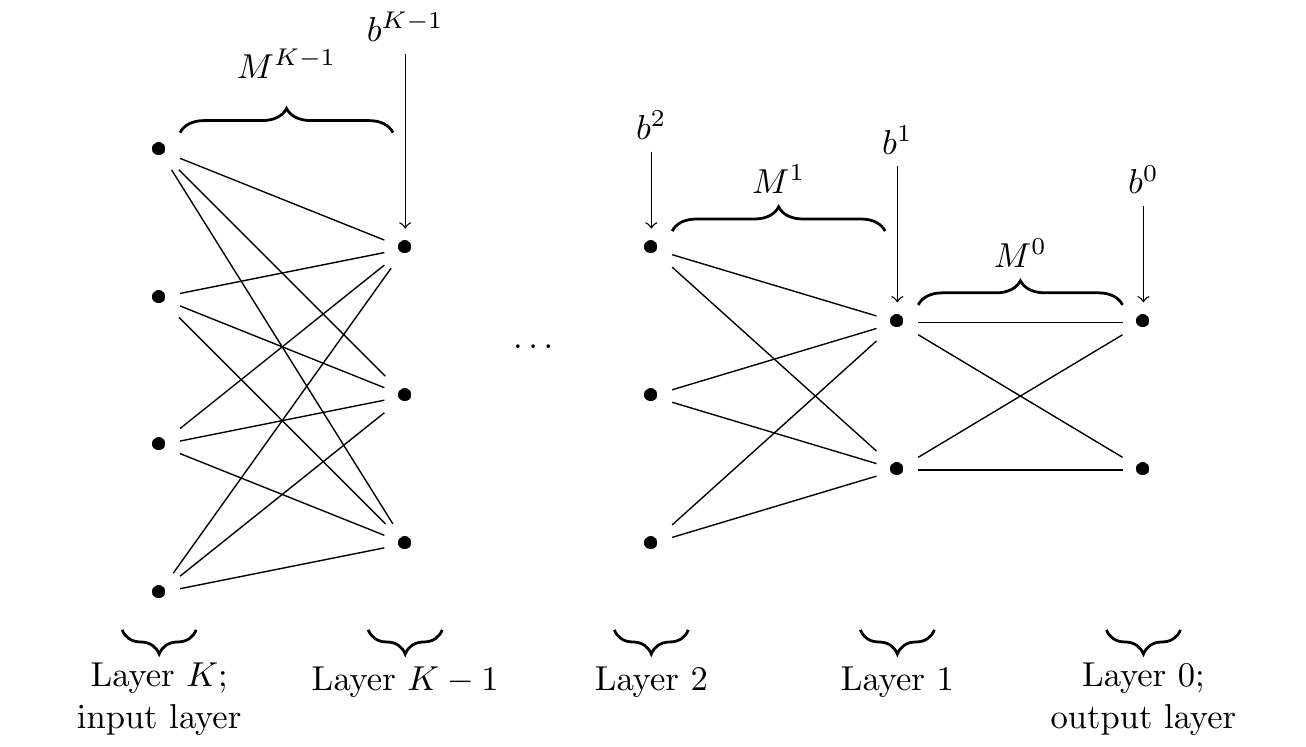}
\caption{The parameters $\mathbf{M}$ and $\mathbf{b}$ of a neural network.}
\label{NN representation}
\end{figure}

\subsection{Continuous piecewise linear functions and neural networks \label{CPL functions-sec}}

We will actively use the fact that the function implemented by a deep ReLU network as well as the intermediate functions between layers are continuous piecewise linear, which means that we can partition their domain of definition in closed polyhedral subsets such that they are linear on each subset. In this paper we use indifferently `linear' or `affine' to describe functions of the form $x \mapsto A x + b$, with $A \in \mathbb{R}^{n \times m}$ some matrix and $b \in \mathbb{R}^n$ some vector.

More precisely, for $m \in \mathbb{N}$, a subset $D \subset \mathbb{R}^m$ is a \textit{closed polyhedron} iif there exist $q \in \mathbb{N}$, $a_1, \dots, a_q \in \mathbb{R}^m$ and $b_1, \dots b_q \in \mathbb{R}$ such that for all $x \in \mathbb{R}^m$, 
\begin{equation} \label{polyhedron definition-main}
x \in D \quad \Longleftrightarrow \quad \begin{cases}
a_1^T x + b_1 \leq 0 \\
\vdots \\
a_q^T x + b_q \leq 0.
\end{cases}
\end{equation}
By convention, if $q=0$, we obtain an empty system of equations which is satisfied for any $x \in \mathbb{R}^m$, meaning the set $\mathbb{R}^m$ is a closed polyhedron.

We say that a function $g: \mathbb{R}^m \rightarrow \mathbb{R}^n$ is \textit{continuous piecewise linear} if there exists a finite set of closed polyhedra whose union is $\mathbb{R}^m$ and such that $g$ is linear over each polyhedron.

It is easy to show (see Proposition \ref{continuity of CPL functions} in the appendix) that this definition implies the continuity of the function, hence the `continuous' in the name. We do not require here the polyhedra to be disjoint and in fact, there are always some overlaps between the borders of adjacent polyhedra. For a given continuous piecewise linear function $g$, there are infinitely many possible sets of closed polyhedra that match the definition. Among them, we can always find one such that all the polyhedra $D$ have nonempty interior $\ring{D}$ (see Proposition \ref{proposition CPL functions} in the appendix). We call such a set admissible, as in the following definition.

\begin{df} \label{admissible set definition-main}
Let $g: \mathbb{R}^m \rightarrow \mathbb{R}^n$ be a continuous piecewise linear function. Let $\Pi$ be a set of closed polyhedra of $\mathbb{R}^m$. We say that $\Pi$ is \textit{admissible} with respect to $g$ if and only if:
\begin{equation} 
\begin{cases}
\bigcup_{D \in \Pi} D = \mathbb{R}^m, \\
\text{for all } D \in \Pi, \ g \text{ is linear on } D, \\
\text{for all } D \in \Pi, \ \ring{D} \neq \emptyset.
\end{cases}
\end{equation}
\end{df}

We now define additional functions associated to a network. Recall the layer functions $h_k$ defined in \eqref{hk-main}, that represent the actions of the network between successive layers. Let $k \in \llbracket 0, K \rrbracket$. We define the following functions:
\begin{equation}\label{definition fk and gk-main}
\begin{aligned}
f_k & = h_k \circ h_{k+1} \circ \dots \circ h_{K-1};\\
g_k & = h_0 \circ h_1 \circ \dots \circ h_{k-1}.
\end{aligned}
\end{equation}
Above, by convention, we let $f_K = id_{\mathbb{R}^{n_K}}$ and $g_0 = id_{\mathbb{R}^{n_0}}$, where $id_{\mathbb{R}^m}$ denotes the identity function on $\mathbb{R}^m$.
The function $f_k:\mathbb{R}^{n_K} \mapsto \mathbb{R}^{n_k}$ represents the mapping implemented by the network between the input layer and the layer $k$. The function $g_k: \mathbb{R}^{n_k} \mapsto \mathbb{R}^{n_0}$ represents the mapping implemented by the network between the layer $k$ and the output layer. Hence, for all $k \in \llbracket 0 , K \rrbracket$ we have $g_k \circ f_k =  f_{\mathbf{M},\mathbf{b}}$, and in particular $f_0 = g_K = f_{\mathbf{M},\mathbf{b}}$.

For any $\Omega \subset \mathbb{R}^{n_K}$, we also denote for all $k \in \llbracket 0,K \rrbracket$, 
\begin{equation}\label{def omega_k-main} \Omega_k = f_k (\Omega).
\end{equation}
In particular, $\Omega_K = f_K(\Omega) = \Omega$.

The following proposition is easy to show by induction and using the fact that the composition of two continuous piecewise linear functions is also continuous piecewise linear (see Proposition \ref{fk and gk are CPL} in the appendix).

\begin{prop}\label{f_k and g_k are piecewise linear-main}
For all $k \in \llbracket 0 , K\rrbracket$, $f_k$ and $g_k$ are continuous piecewise linear.
\end{prop}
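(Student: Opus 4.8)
The plan is to prove the statement by induction on $k$, using two elementary facts: (i) each layer function $h_k$ defined in \eqref{hk-main} is continuous piecewise linear, and (ii) the composition of two continuous piecewise linear functions is again continuous piecewise linear. Fact (ii) is exactly the content referenced as Proposition \ref{fk and gk are CPL} in the appendix, so I may invoke it freely; only fact (i) and the bookkeeping of the induction need to be spelled out here.

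First I would check fact (i). For $k=0$, $h_0(x) = M^0 x + b^0$ is affine, hence trivially continuous piecewise linear with the single polyhedron $\RR^{n_1}$. For $k \neq 0$, $h_k(x) = \sigma(M^k x + b^k)$ where $\sigma$ acts coordinatewise. One can exhibit an explicit admissible partition: for each sign pattern $s \in \{+,-\}^{n_k}$, let $D_s$ be the polyhedron cut out by the inequalities $(M^k x + b^k)_i \le 0$ for $i$ with $s_i = -$ and $-(M^k x + b^k)_i \le 0$ for $i$ with $s_i = +$; on $D_s$ the map $h_k$ coincides with the affine map that zeroes out the coordinates with $s_i = -$ and leaves the others unchanged. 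Since $\RR^{n_{k+1}} = \bigcup_s D_s$ and there are finitely many sign patterns, $h_k$ is continuous piecewise linear.

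Next comes the induction for $f_k$. The base case is $f_K = \mathrm{id}_{\RR^{n_K}}$, which is affine, hence continuous piecewise linear. For the inductive step, assume $f_{k+1}$ is continuous piecewise linear; since $f_k = h_k \circ f_{k+1}$ and $h_k$ is continuous piecewise linear by fact (i), fact (ii) gives that $f_k$ is continuous piecewise linear. This covers all $k \in \llbracket 0, K \rrbracket$. The argument for $g_k$ is symmetric: the base case is $g_0 = \mathrm{id}_{\RR^{n_0}}$, and the inductive step uses $g_{k+1} = g_k \circ h_k$ together with fact (i) and fact (ii) to propagate the property from $g_k$ to $g_{k+1}$.

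I do not expect any real obstacle here: the only slightly delicate point is making sure the composition lemma (fact (ii)) is stated for the right signature — namely that composing $g : \RR^m \to \RR^n$ and $g' : \RR^n \to \RR^p$, both continuous piecewise linear, yields a continuous piecewise linear map $\RR^m \to \RR^p$ — which is precisely what Proposition \ref{fk and gk are CPL} provides. The finiteness of the polyhedral partitions is what keeps the induction from degenerating (each composition only multiplies the number of pieces by a finite factor), but since we are only asserting existence of such a partition and not controlling its size, even that is automatic.
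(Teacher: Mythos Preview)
Your proposal is correct and follows exactly the same approach as the paper's proof (Proposition~\ref{fk and gk are CPL} in the appendix): induction on $k$ using that each $h_k$ is continuous piecewise linear and that composition preserves this property. One small labeling slip: the composition lemma you invoke as fact~(ii) is Proposition~\ref{composition CPL functions} in the appendix, not Proposition~\ref{fk and gk are CPL} (the latter is the appendix restatement of the very proposition you are proving); this does not affect the argument.
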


In particular, $f_{\mathbf{M,b}}$ is continuous piecewise linear.

We say that a list of sets of closed polyhedra $\boldsymbol{\Pi} = (\Pi_1, \dots, \Pi_{K-1})$ is \textit{admissible} with respect to $(\mathbf{M}, \mathbf{b})$ iif for all $k \in \llbracket 1, K-1 \rrbracket$, the set of closed polyhedra $\Pi_k$ is admissible with respect to $g_k$. Since there always exist such $\Pi_k$ (from Proposition \ref{f_k and g_k are piecewise linear-main} and Proposition  \ref{proposition CPL functions} in Appendix \ref{Appendix-A}), there always exists an admissible list $\mathbf{\Pi}$.

\subsection{Equivalence between two parameterizations}\label{Equivalence between two parameterizations}

We are interested in sufficient conditions to identify the parameters of a network from its function. As mentioned in the introduction, some elementary operations on the parameters are well known to preserve the function of a network, so what we shall actually identify is the equivalence class of the parameters modulo these operations. There are two such operations:

\begin{itemize}
\item the permutation of neurons of a hidden layer;
\item the positive rescalings, that is, multiplying all the outward weights of a hidden neuron by a strictly positive number and dividing the inward weights by the same number.
\end{itemize}
The invariance to permutation is classical and common to many feedforward architectures. It is described in the foundational articles \cite{hecht1990algebraic, chen1993geometry}. The invariance to positive rescalings is more specific to ReLU (and homogeneous activation functions), and is also well-studied, as for instance in \cite{pourzanjani2017improving, petzka2020symmetries,phuong2020functional,pmlr-v119-rolnick20a,stock:tel-03208517}.

We give in Definition \ref{equivalence def-main} below the formalization we use for the equivalence relation modulo these operations, after introducing some notations.
For all $m \in \mathbb{N}^*$, we denote by $\mathfrak{S}_m $ the set of all permutations of $\llbracket 1 , m \rrbracket$. For any permutation $\varphi \in \mathfrak{S}_m$, we denote by $P_\varphi$ the $m \times m$ permutation matrix associated to $\varphi$, whose coefficients are defined as
\[(P_\varphi)_{i,j} = \begin{cases}  1 & \text{if } \varphi(j) = i\\ 0 & \text{otherwise.} \end{cases}\]
We also denote by $\One_m$ the vector $(1,1, \dots, 1)^T \in \mathbb{R}^m$, by $\mathbb{R}^*_+$ the set of strictly positive real numbers and by $\Id_m$ the $m \times m$ identity matrix.

\begin{df}[Equivalence between parameters]\label{equivalence def-main}
If $(\mathbf{M},\mathbf{b})$ and $(\tilde{\mathbf{M}}, \tilde{\mathbf{b}})$ are two parameterizations of a network, we say that $(\mathbf{M},\mathbf{b})$ is equivalent to $(\tilde{\mathbf{M}}, \tilde{\mathbf{b}})$, and we write $(\mathbf{M},\mathbf{b}) \sim (\tilde{\mathbf{M}}, \tilde{\mathbf{b}})$, if and only if there exist:
\begin{itemize}
\item a family of permutations $\boldsymbol{\varphi} = (\varphi_0, \dots , \varphi_{K}) \in \mathfrak{S}_{n_0} \times \dots \times \mathfrak{S}_{n_{K}}$, with $P_{\varphi_0} = \Id_{ n_0}$ and $P_{\varphi_{K}} = \Id_{ n_K}$,
\item a family of vectors $\boldsymbol{\lambda}=(\lambda^{0}, \lambda^{1}, \dots, \lambda^{K}) \in (\mathbb{R_+^*})^{n_0} \times \dots \times( \mathbb{R_+^*})^{n_{K}}$, with $\lambda^{0} = \One_{n_0}$ and $\lambda^{K} = \One_{n_K}$,
\end{itemize}
such that for all $k \in \llbracket 0 , K-1 \rrbracket$,
\begin{equation}\label{equivalence expression-main}\begin{cases}\tilde{M}^{k}= P_{\varphi_{k}} \Diag(\lambda^{k} )M^{k}\Diag(\lambda^{(k+1)})^{-1}P_{\varphi_{k+1}}^{-1} \\
\tilde{b}^{k} = P_{\varphi_k}\Diag(\lambda^{k} ) b^{k}.\end{cases} \end{equation}

The relation $\mathbf{(M,b)} \sim \mathbf{(\tilde{M}, \tilde{b})}$ is an equivalence relation \cite{pourzanjani2017improving, petzka2020symmetries,phuong2020functional}. We include a proof of this fact for completeness in Appendix \ref{Appendix-A} (see Proposition \ref{the relation is an equivalence relation}). We denote by $[\mathbf{M},\mathbf{b}]$ the equivalence class of $(\mathbf{M},\mathbf{b})$.
\end{df}
We can now formalize in Proposition \ref{equivalent networks have same function-main} the fact discussed at the beginning of the section: two equivalent parameterizations modulo permutation and positive rescaling implement the same function. As mentioned, this result is well-known \cite{pourzanjani2017improving, petzka2020symmetries,phuong2020functional,pmlr-v119-rolnick20a,stock:tel-03208517}, but we prove it for completeness in Appendix \ref{Appendix-A} (see Proposition \ref{object relations for equivalent networks} and Corollary \ref{equivalent networks have same function}).

\begin{prop}\label{equivalent networks have same function-main}
If $(\mathbf{M},\mathbf{b}) \sim (\mathbf{\tilde{M}}, \mathbf{\tilde{b}})$, then $f_{\mathbf{M},\mathbf{b}} = f_{\mathbf{\tilde{M}}, \mathbf{\tilde{b}}}$.
\end{prop}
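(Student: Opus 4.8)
The plan is to show that the effect of a permutation-and-rescaling on the parameters corresponds, layer by layer, to conjugating the intermediate representations by the corresponding monomial matrices, and that these conjugations telescope and cancel between consecutive layers, leaving the input–output map unchanged. Concretely, for a parameterization $(\mathbf{M},\mathbf{b})$ and each $k\in\llbracket 0,K\rrbracket$, set $T_k = P_{\varphi_k}\Diag(\lambda^k)$, so $T_0 = \Id_{n_0}$ and $T_K = \Id_{n_K}$ by the constraints in Definition \ref{equivalence def-main}. Denote by $\tilde h_k$ the layer functions attached to $(\tilde{\mathbf M},\tilde{\mathbf b})$. The core claim to establish is the \emph{intertwining identity}
\begin{equation*}
\tilde h_k = T_k \circ h_k \circ T_{k+1}^{-1}\qquad\text{for all } k\in\llbracket 0,K-1\rrbracket,
\end{equation*}
where I abuse notation and write $T_k$ also for the linear map $x\mapsto T_k x$.

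First I would prove this intertwining identity. For $k=0$ (the output layer, no ReLU), $\tilde h_0(x) = \tilde M^0 x + \tilde b^0 = P_{\varphi_0}\Diag(\lambda^0)M^0\Diag(\lambda^1)^{-1}P_{\varphi_1}^{-1} x + P_{\varphi_0}\Diag(\lambda^0) b^0 = T_0\bigl(M^0 T_1^{-1}x + b^0\bigr) = T_0\circ h_0\circ T_1^{-1}(x)$, using $T_0=\Id$. For $k\in\llbracket 1,K-1\rrbracket$ the only subtlety is commuting the ReLU past $T_k$: since $T_k = P_{\varphi_k}\Diag(\lambda^k)$ with $\lambda^k\in(\mathbb{R}_+^*)^{n_k}$, the map $T_k$ permutes coordinates and scales each by a strictly positive factor, and $\sigma$ applied coordinatewise commutes with any such monomial map with positive entries, i.e. $\sigma(T_k y) = T_k\,\sigma(y)$. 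Hence $\tilde h_k(x) = \sigma(\tilde M^k x + \tilde b^k) = \sigma\bigl(T_k(M^k T_{k+1}^{-1}x + b^k)\bigr) = T_k\,\sigma(M^k T_{k+1}^{-1}x + b^k) = T_k\circ h_k\circ T_{k+1}^{-1}(x)$, which is the claim. The commutation of $\sigma$ with positive-scaling permutation matrices is the one genuinely ReLU-specific step, and it is the main thing to get right — though it is elementary.

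Finally I would compose. Using $f_{\mathbf M,\mathbf b} = h_0\circ h_1\circ\cdots\circ h_{K-1}$ and the analogous formula for $(\tilde{\mathbf M},\tilde{\mathbf b})$, the intertwining identity gives
\begin{equation*}
f_{\tilde{\mathbf M},\tilde{\mathbf b}} = (T_0\circ h_0\circ T_1^{-1})\circ(T_1\circ h_1\circ T_2^{-1})\circ\cdots\circ(T_{K-1}\circ h_{K-1}\circ T_K^{-1}).
\end{equation*}
Each interior pair $T_{j}^{-1}\circ T_{j}$ cancels, so the right-hand side collapses to $T_0\circ h_0\circ h_1\circ\cdots\circ h_{K-1}\circ T_K^{-1} = T_0\circ f_{\mathbf M,\mathbf b}\circ T_K^{-1}$, and since $T_0 = \Id_{n_0}$ and $T_K = \Id_{n_K}$ this equals $f_{\mathbf M,\mathbf b}$. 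A clean way to present this without an informal "telescoping" argument is a short downward induction on $k$ showing $\tilde h_k\circ\tilde h_{k+1}\circ\cdots\circ\tilde h_{K-1} = T_k\circ h_k\circ\cdots\circ h_{K-1}\circ T_K^{-1}$, with the base case $k=K-1$ being the intertwining identity and the inductive step precomposing with $\tilde h_{k}=T_{k}\circ h_{k}\circ T_{k+1}^{-1}$ and cancelling $T_{k+1}^{-1}\circ T_{k+1}$; taking $k=0$ concludes. No real obstacle beyond bookkeeping is expected; the $\sigma$–monomial commutation and the index conventions (layers running $K$ down to $0$, with $f_{\mathbf M,\mathbf b}$ the full composite) are the only places to be careful.
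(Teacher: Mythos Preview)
Your proposal is correct and follows essentially the same approach as the paper: the paper's Proposition \ref{object relations for equivalent networks} proves exactly your intertwining identity $\tilde h_k = T_k\circ h_k\circ T_{k+1}^{-1}$ via the same $\sigma$--monomial commutation argument, then obtains $\tilde f_k = T_k\circ f_k$ by the downward induction you describe, and concludes by taking $k=0$ with $T_0=\Id_{n_0}$.
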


In this article we give a set of conditions under which we have a reciprocal, i.e. if two parameterizations $(\mathbf{M},\mathbf{b})$ and $(\mathbf{\tilde{M}}, \mathbf{\tilde{b}})$ satisfying the conditions lead to the same function on a set $\Omega$, i.e.  $f_{\mathbf{M,b}}(x) = f_{\mathbf{\tilde{M},\tilde{b}}}(x)$ for all $x \in \Omega$, then they are equivalent: $\mathbf{(M,b)} \sim \mathbf{(\tilde{M}, \tilde{b})}$.

\section{Main result}\label{main-result-sec}

The core of our work is exposed in this section. It is structured as follows. In Section \ref{Section conditions-main}, we expose the conditions $\Pbf$ and in Section \ref{main theorems-sec} we state our main theorems of identifiability. Then, Section \ref{discussion of the conditions-sec} is dedicated to an extensive discussion of the conditions $\Pbf$, with motivating examples and comparison to the state of the art.

\subsection{Conditions}\label{Section conditions-main}

We expose in this section the conditions under which the main theorem holds. They are formalized in Definition \ref{Conditions P-main} and referred to as conditions $\mathbf{P}$. 

First, we introduce a few notations. We consider a network with $K \geq 2$ layers and with parameters $\mathbf{(M,b)}$, a list of sets of closed polyhedra $\boldsymbol{\Pi} = (\Pi_1, \dots, \Pi_{K-1})$ admissible with respect to $\mathbf{(M,b)}$ and a domain $\Omega \subset \mathbb{R}^{n_K}$. Recall the definitions \eqref{hk-main} and \eqref{definition fk and gk-main} of the functions $h_k$, $f_k$ and $g_k$ associated to the network. For all $k \in \llbracket 1, K-1 \rrbracket$, $g_{k}$ is continuous piecewise linear, and since $\boldsymbol{\Pi}$ is admissible with respect to $\mathbf{(M,b)}$, by definition, the set of closed polyhedra $\Pi_{k}$ is admissible with respect to $g_k$ in the sense of Definition \ref{admissible set definition-main}. For all $D \in \Pi_k$, the function $g_k$ thus coincides with a linear function on $D$. Since by definition the interior of $D$ is nonempty, we define $V^k(D) \in \mathbb{R}^{n_0 \times n_k}$ and $c^k(D) \in \mathbb{R}^{n_0}$ as the unique couple satisfying, for all $x \in D$:
\begin{equation}\label{V_k definition-main}
g_k(x) = V^k(D)x + c^k(D).
\end{equation}
For $\Omega \subset \RR^{n_K}$, recall the definition \eqref{def omega_k} of $\Omega_k$, for all $k \in [0,K]$. For any $m,n \in \mathbb{N}^*$, for any $m \times n$ matrix $\Sigma$, for any $i \in \llbracket 1 , m \rrbracket,  j \in \llbracket 1 , n \rrbracket$, we denote by $\Sigma_{i,.}$ the $i^{\text{th}}$ row vector of $\Sigma$ and by $\Sigma_{.,j}$ the $j^{\text{th}}$ column vector of $\Sigma$.  We denote $E^k_i = \{x \in \mathbb{R}^{n_k}, \ x_i = 0 \}$, and $h_k^{lin}(x) = M^kx + b^k$. For any $m \in \mathbb{N}^*$ and any subset $A \subset \mathbb{R}^m$, we denote by $\partial A$ the topological boundary with respect to the standard topology of $\mathbb{R}^m$.

\begin{df}\label{Conditions P-main}
We say that $(\mathbf{M},\mathbf{b},\Omega, \boldsymbol{\Pi})$ satisfies the conditions $\mathbf{P}$ iif for all $k \in \llbracket 1, K-1 \rrbracket$:
\begin{itemize}
\item[$\mathbf{P}.a)$] $M^k$ is full row rank;
\item[$\mathbf{P}.b)$] for all $i \in \llbracket 1 , n_k \rrbracket$, there exists $x \in \ring{\Omega}_{k+1}$ such that
\[M^k_{i,.}x + b^k_i = 0,\]
or equivalently
\[E_i^k \cap h_k^{lin}(\ring{\Omega}_{k+1}) \neq \emptyset;\]
\item[$\mathbf{P}.c)$] for all $D \in \Pi_k$, for all $i \in \llbracket 1 , n_k \rrbracket$, if $E^k_i \cap D \cap \Omega_{k} \neq \emptyset$ then $V^k_{.,i}(D) \neq 0$;
\item[$\mathbf{P}.d)$] for any affine hyperplane $H \subset \mathbb{R}^{n_{k+1}}$,
\[H \cap \ring{\Omega}_{k+1} \ \not\subset \bigcup_{D \in \Pi_k} \partial h_k^{-1}(D).\]
\end{itemize}
\end{df}

The conditions $\Pbf$ are invariant modulo equivalences of parameters. Indeed, as shown by the following proposition, if some parameters $\mathbf{(M,b)}$ satisfy the conditions $\mathbf{P}$, then all the parameters in their equivalence class satisfy them too.

\begin{prop}\label{conditions P are stable by equivalence-main}
Suppose $(\mathbf{M}, \mathbf{b})$ and $(\mathbf{\tilde{M}}, \mathbf{\tilde{b}})$ are two equivalent network parameterizations, and suppose that there exists a list $\Pibf$ admissible with respect to $(\Mbf, \bbf)$ such that $(\mathbf{M}, \mathbf{b}, \Omega, \boldsymbol{\Pi})$ satisfies the conditions $\mathbf{P}$. 

Then, there exists a list $\boldsymbol{\tilde{\Pi}}$ that is admissible with respect to $\mathbf{(\tilde{M}, \tilde{b})}$, and such that $(\mathbf{\tilde{M}}, \mathbf{\tilde{b}}, \Omega, \boldsymbol{\tilde{\Pi}})$ satisfies the conditions $\mathbf{P}$.
\end{prop}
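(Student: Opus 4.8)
The plan is to observe that moving from $(\Mbf,\bbf)$ to an equivalent $(\Mbft,\bbft)$ amounts, layer by layer, to conjugating all the relevant maps by the invertible linear maps $A_k := P_{\varphi_k}\Diag(\lambda^k)$, where $(\boldsymbol{\varphi},\boldsymbol{\lambda})$ is a pair realizing the equivalence as in Definition \ref{equivalence def-main}, and then to transport the list $\Pibf$ and each of the four conditions through these isomorphisms. First I would record, from (the proof of) Proposition \ref{object relations for equivalent networks}, the identities $\tilde f_k = A_k\circ f_k$, $\tilde g_k = g_k\circ A_k^{-1}$, $\tilde h_k = A_k\circ h_k\circ A_{k+1}^{-1}$ and $\tilde h_k^{lin} = A_k\circ h_k^{lin}\circ A_{k+1}^{-1}$ for all $k$, with $A_0 = \Id_{n_0}$ and $A_K = \Id_{n_K}$ since $\varphi_0,\varphi_K$ and $\lambda^0,\lambda^K$ are trivial in Definition \ref{equivalence def-main}. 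Each $A_k$ is a linear isomorphism of $\RR^{n_k}$, hence a homeomorphism: it sends closed polyhedra to closed polyhedra, preserves nonempty interiors, commutes with the topological boundary $\partial$, and maps affine hyperplanes to affine hyperplanes; these facts will be used repeatedly.

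Then I would set $\tilde\Pi_k := \{A_k D : D\in\Pi_k\}$ for $k\in\lb 1,K-1\rb$ and $\tilde\Pibf := (\tilde\Pi_1,\dots,\tilde\Pi_{K-1})$. Admissibility with respect to $(\Mbft,\bbft)$ is immediate from Definition \ref{admissible set definition-main}: $\bigcup_{D\in\Pi_k}A_k D = A_k\RR^{n_k} = \RR^{n_k}$, each $A_k D$ is a closed polyhedron with $\ring{A_k D} = A_k\ring{D} \neq\emptyset$, and $\tilde g_k = g_k\circ A_k^{-1}$ is linear on $A_k D$ whenever $g_k$ is linear on $D$, with linear part $\tilde V^k(A_k D) = V^k(D)A_k^{-1}$. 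Alongside this I would record the consequences $\tilde\Omega_k = \tilde f_k(\Omega) = A_k\Omega_k$ (hence $\ring{\tilde\Omega}_k = A_k\ring{\Omega}_k$), $\tilde h_k^{lin}(\ring{\tilde\Omega}_{k+1}) = A_k\,h_k^{lin}(\ring{\Omega}_{k+1})$, and $\tilde h_k^{-1}(A_k D) = A_{k+1}\big(h_k^{-1}(D)\big)$. The one computation requiring care is the action of $A_k$ on the coordinate data: since $\Diag(\lambda^k)$ has positive diagonal entries it fixes each coordinate hyperplane, so $A_k(E^k_j) = E^k_{\varphi_k(j)}$, and correspondingly the $i$-th column of $V^k(D)A_k^{-1}$ is a positive multiple of $V^k_{.,\varphi_k^{-1}(i)}(D)$.

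With these preparations, verifying $\mathbf{P}.a)$–$\mathbf{P}.d)$ for $(\Mbft,\bbft,\Omega,\tilde\Pibf)$ is a routine translation, carried out for each $k\in\lb 1,K-1\rb$. For $\mathbf{P}.a)$, $\tilde M^k = A_k M^k A_{k+1}^{-1}$ has the same rank as $M^k$, hence is full row rank. For $\mathbf{P}.b)$, $E^k_i\cap\tilde h_k^{lin}(\ring{\tilde\Omega}_{k+1}) = A_k\big(E^k_{\varphi_k^{-1}(i)}\cap h_k^{lin}(\ring{\Omega}_{k+1})\big)$, which is nonempty by the original $\mathbf{P}.b)$ applied to the index $\varphi_k^{-1}(i)$. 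For $\mathbf{P}.c)$, if $E^k_i\cap A_k D\cap\tilde\Omega_k = A_k\big(E^k_{\varphi_k^{-1}(i)}\cap D\cap\Omega_k\big)\neq\emptyset$, then the original $\mathbf{P}.c)$ gives $V^k_{.,\varphi_k^{-1}(i)}(D)\neq0$, whence $\tilde V^k_{.,i}(A_k D)\neq0$ by the column identity above. For $\mathbf{P}.d)$, given an affine hyperplane $H\subset\RR^{n_{k+1}}$, set $H' := A_{k+1}^{-1}(H)$, again an affine hyperplane; then $H\cap\ring{\tilde\Omega}_{k+1} = A_{k+1}(H'\cap\ring{\Omega}_{k+1})$ and $\bigcup_{D\in\Pi_k}\partial\tilde h_k^{-1}(A_k D) = A_{k+1}\big(\bigcup_{D\in\Pi_k}\partial h_k^{-1}(D)\big)$, so an inclusion $H\cap\ring{\tilde\Omega}_{k+1}\subset\bigcup_{D\in\Pi_k}\partial\tilde h_k^{-1}(A_k D)$ would, after applying $A_{k+1}^{-1}$, contradict the original $\mathbf{P}.d)$ for $H'$. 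The main obstacle is purely bookkeeping: keeping track of how the permutations $\varphi_k$ relabel the coordinate hyperplanes $E^k_i$ and the columns of $V^k(D)$, and making sure that the maps appearing on the two sides of a given layer are exactly the $A_k$ and $A_{k+1}$ dictated by Definition \ref{equivalence def-main}. No genuine analytic difficulty arises, since every $A_k$ is a homeomorphism.
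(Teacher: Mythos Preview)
Your proposal is correct and follows essentially the same approach as the paper's proof (Proposition \ref{conditions P are stable by equivalence} in the appendix): define $\tilde\Pi_k$ as the pushforward of $\Pi_k$ by $A_k = P_{\varphi_k}\Diag(\lambda^k)$, invoke the transport identities of Proposition \ref{object relations for equivalent networks}, and verify $\mathbf{P}.a)$--$\mathbf{P}.d)$ one by one via the bijectivity of the $A_k$. Your use of the compact notation $A_k$ and the observation $A_k(E^k_j)=E^k_{\varphi_k(j)}$ streamline the bookkeeping, but the argument is otherwise line-for-line the same as the paper's.
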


Proposition \ref{conditions P are stable by equivalence-main} is proven as Proposition \ref{conditions P are stable by equivalence} in Appendix \ref{Appendix-B}.

\subsection{Main theorems}\label{main theorems-sec}

We have now introduced all the necessary material to expose our main result, Theorem \ref{main theorem-main}, as well as an application in terms of risk minimization in Section \ref{risk minimization corollary-sec}. 

\subsubsection{Identifiability statement}\label{main-result-subsec}

Our main theorem is the following one. We provide a sketch of the proof in Section \ref{sketch-sec}. For the complete proof, see Theorem \ref{main theorem} in Appendix $\ref{Appendix-B}$ and its proof in Section \ref{Proof of the main theorem-apdx}.

\begin{thm}\label{main theorem-main}
Let $K \in \mathbb{N}$, $K \geq 2$. Suppose we are given two networks with $K$ layers, identical number of neurons per layer, and with respective parameters $(\mathbf{M}, \mathbf{b})$ and $(\mathbf{\tilde{M}}, \mathbf{\tilde{b}})$. Assume $\boldsymbol{\Pi}$ and $\boldsymbol{\tilde{\Pi}}$ are two lists of sets of closed polyhedra that are admissible with respect to $(\mathbf{M}, \mathbf{b})$ and $(\mathbf{\tilde{M}}, \mathbf{\tilde{b}})$ respectively. Denote by $n_K$ the number of neurons of the input layer, and suppose we are given a set $\Omega \subset \mathbb{R}^{n_K}$ such that $(\mathbf{M}, \mathbf{b},\Omega,\boldsymbol{\Pi})$ and $(\mathbf{\tilde{M}}, \mathbf{\tilde{b}},\Omega, \boldsymbol{\tilde{\Pi}})$ satisfy the conditions $\mathbf{\mathbf{P}}$, and such that, for all $x \in \Omega$:
 \[f_{\mathbf{M},\mathbf{b}}(x) = f_{\tilde{\mathbf{M}}, \tilde{\mathbf{b}}}(x).\]
 
Then:
\[(\mathbf{M},\mathbf{b}) \sim (\mathbf{\tilde{M}}, \mathbf{\tilde{b}}).\]
\end{thm}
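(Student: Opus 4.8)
The plan is to proceed by induction on $K$, the number of layers, identifying weights and biases one layer at a time, starting from the input layer $K$ and working toward the output. The base case $K=2$ amounts to an identifiability statement for shallow networks and is handled by the same machinery specialized to a single hidden layer. For the inductive step, I would first identify the first layer map $h_{K-1}$ up to permutation and positive rescaling of its neurons, then ``peel it off'' and apply the induction hypothesis to the remaining $K-1$ layers. The key geometric object is the set of nondifferentiability points (the ``boundaries'') of $f_{\mathbf{M},\mathbf{b}}$ in $\mathring\Omega$: each neuron $i$ in layer $K-1$ contributes a piece of the affine hyperplane $\{x : M^{K-1}_{i,\cdot}x + b^{K-1}_i = 0\}$ to this boundary set, and crucially this piece is genuinely part of the nondifferentiability locus --- it is not ``erased'' by the downstream layers. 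This is exactly what conditions $\mathbf{P}.b)$ and $\mathbf{P}.c)$ buy us: $\mathbf{P}.b)$ guarantees the hyperplane $E^{K-1}_i$ actually meets the image $h_{K-1}^{\mathrm{lin}}(\mathring\Omega_K)$ so the boundary piece is nonempty, and $\mathbf{P}.c)$ guarantees the downstream linear map $V^{K-1}(D)$ has a nonzero $i$-th column wherever that boundary piece appears, so the kink survives. Condition $\mathbf{P}.d)$ ensures that no spurious affine hyperplane arises as a boundary purely from the polyhedral decomposition of a downstream layer, so that the hyperplanes we recover are exactly the $n_{K-1}$ neuron hyperplanes of the first layer and nothing more.

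Concretely, the first main step is: from $f_{\mathbf{M},\mathbf{b}}=f_{\tilde{\mathbf{M}},\tilde{\mathbf{b}}}$ on $\Omega$, deduce that the two functions have the same nondifferentiability locus in $\mathring\Omega$, hence (by the preceding paragraph, applied to both parameterizations) the collection of affine hyperplanes $\{M^{K-1}_{i,\cdot}x+b^{K-1}_i=0\}_{i}$ equals the collection $\{\tilde M^{K-1}_{i,\cdot}x+\tilde b^{K-1}_i=0\}_{i}$, as sets of hyperplanes. Since $\mathbf{P}.a)$ forces $M^{K-1}$ and $\tilde M^{K-1}$ to be full row rank (so these hyperplanes are pairwise distinct and the rows are nonzero), this pairing of hyperplanes defines a permutation $\varphi_{K-1}$ of $\llbracket 1,n_{K-1}\rrbracket$, and matching each hyperplane to its mate forces $\tilde M^{K-1}_{\varphi_{K-1}(i),\cdot}=\lambda^{K-1}_i M^{K-1}_{i,\cdot}$ and $\tilde b^{K-1}_{\varphi_{K-1}(i)}=\lambda^{K-1}_i b^{K-1}_i$ for some nonzero scalars $\lambda^{K-1}_i$. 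The second step is to show these scalars are \emph{positive}: this is where one uses that on a small neighborhood straddling a boundary piece, the ReLU of $M^{K-1}_{i,\cdot}x+b^{K-1}_i$ is active on the correct side, and matching the ``switched-on'' half-spaces of the two networks (again using that the downstream column $V^{K-1}$ is nonzero so the sign is detectable) pins down $\lambda^{K-1}_i>0$. Then $h_{K-1}$ and $\tilde h_{K-1}$ agree up to the rescaling/permutation $(\mathrm{Diag}(\lambda^{K-1}),P_{\varphi_{K-1}})$; replacing $(\tilde{\mathbf{M}},\tilde{\mathbf{b}})$ by an equivalent parameterization we may assume $h_{K-1}=\tilde h_{K-1}$, so $f_{K-1}=\tilde f_{K-1}$ on $\Omega$, hence the depth-$(K-1)$ networks $g_{K-1}$ and $\tilde g_{K-1}$ agree on $\Omega_{K-1}=f_{K-1}(\Omega)$. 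To invoke the induction hypothesis I need $(\mathbf{M}',\mathbf{b}',\Omega_{K-1},\boldsymbol\Pi')$ (the truncated data) to still satisfy conditions $\mathbf{P}$; this follows because the truncated admissible list is $(\Pi_1,\dots,\Pi_{K-2})$, the sets $\Omega_{j}$ for $j\le K-1$ are unchanged, and every clause of $\mathbf{P}$ for $k\le K-2$ refers only to this unchanged data --- with Proposition~\ref{conditions P are stable by equivalence-main} absorbing the change of representative. Applying the induction hypothesis gives equivalence of the truncated parameters, and combining with the already-matched first layer yields $(\mathbf{M},\mathbf{b})\sim(\tilde{\mathbf{M}},\tilde{\mathbf{b}})$.

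The hard part will be the rigorous extraction of the ``neuron hyperplanes'' from the nondifferentiability locus --- that is, proving cleanly that (i) every neuron hyperplane of layer $K-1$ contributes an actual, observable $(n_K-1)$-dimensional kink in $\mathring\Omega$, and (ii) conversely every observable kink hyperplane comes from such a neuron and not from the downstream polyhedral subdivision. Step (i) is delicate because even though $\mathbf{P}.b)$ places a point of $E^{K-1}_i$ in the image, one must check that near the preimage point the downstream composition does not accidentally cancel the kink --- this is a local argument: in a small ball the downstream map $g_{K-1}$ restricted to the relevant linear region is $x\mapsto V^{K-1}(D)x+c^{K-1}(D)$, and $\mathbf{P}.c)$ gives $V^{K-1}_{\cdot,i}(D)\ne 0$, so the one-sided derivatives of $f_{\mathbf{M},\mathbf{b}}$ across $E^{K-1}_i$ genuinely differ by a rank-one term $V^{K-1}_{\cdot,i}(D)\,(M^{K-1}_{i,\cdot})$. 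Step (ii) is exactly the content of $\mathbf{P}.d)$: any candidate kink hyperplane $H$, if it were not a neuron hyperplane, would have $H\cap\mathring\Omega_K$ contained in the union of the boundaries $\partial h_{K-1}^{-1}(D)$ of the pulled-back polyhedra (the only other source of nondifferentiability), which $\mathbf{P}.d)$ forbids. Getting the interplay of these local and global arguments correct --- especially the careful bookkeeping of which polyhedron $D$ one is in, and making the ``same nondifferentiability locus'' deduction from functional equality precise at the level of hyperplane arrangements --- is the crux; the permutation/positivity algebra and the inductive descent are then comparatively routine. The full details are deferred to Appendix~\ref{Appendix-B}.
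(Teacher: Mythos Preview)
Your proposal is correct and follows essentially the same approach as the paper's: induction on $K$, extracting the first-layer neuron hyperplanes from the nondifferentiability locus of $f_{\mathbf{M},\mathbf{b}}$ on $\mathring\Omega$ via conditions $\mathbf{P}.b)$--$\mathbf{P}.d)$ exactly as you describe, determining the sign of the matching scalars, then peeling off that layer and recursing. The paper packages your hyperplane-extraction and positivity steps into a single ``Fundamental Lemma'' (Lemma~\ref{Fundamental lemma-main}) after first normalizing both parameterizations to have unit-norm rows---so your positive scalars $\lambda^{K-1}_i$ become signs $\epsilon_i\in\{\pm1\}$ there---but this is a cosmetic reorganization of the same argument.
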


As mentioned before, this theorem can be seen as a partial reciprocal to Proposition \ref{equivalent networks have same function-main}. Indeed, the latter shows that two networks with equivalent parameters modulo permutation and positive rescaling implement the same function. In other words, parameter equivalence implies functional equivalence of the networks. In Theorem \ref{main theorem-main}, we state that under the conditions $\mathbf{P}$, functional equivalence (on a given domain $\Omega$) implies parameter equivalence modulo permutation and positive rescaling.

\subsubsection{An application to risk minimization}\label{risk minimization corollary-sec}

Assume we are given a couple of input-output variables $(X,Y)$ generated by a ground truth network with parameters $(\mathbf{M}, \mathbf{b})$:
\[Y = f_{\mathbf{M,b}}(X).\]

We can use Theorem \ref{main theorem-main} to show that the only way to bring the population risk to $0$ is to find the ground truth parameters -modulo permutation and positive rescaling. 

Indeed, let $\Omega \subset \mathbb{R}^{n_K}$ be a domain that is contained in the support of $X$, and suppose $L : \mathbb{R}^{n_0} \times \mathbb{R}^{n_0} \rightarrow \mathbb{R}_+$ is a loss function such that $L(y,y') = 0 \Rightarrow y = y'$. Consider the population risk:
\[R(\mathbf{\tilde{M}}, \mathbf{\tilde{b}}) = \mathbb{E}[L(f_{\mathbf{\tilde{M}},\mathbf{\tilde{b}}}(X), Y)].\]
We have the following result.

\begin{coro}\label{risk minimization formulation-main}
Suppose there exists a list of sets of closed polyhedra $\boldsymbol{\Pi}$ admissible with respect to $(\mathbf{M}, \mathbf{b})$ such that $(\mathbf{M}, \mathbf{b},\Omega, \boldsymbol{\Pi})$ satisfies the conditions $\mathbf{P}$. 

If $(\mathbf{\tilde{M}}, \mathbf{\tilde{b}}) $ is such that there exists a list $\boldsymbol{\tilde{\Pi}}$ admissible with respect to $(\mathbf{\tilde{M}}, \mathbf{\tilde{b}})$ such that $(\mathbf{\tilde{M}}, \mathbf{\tilde{b}},\Omega, \boldsymbol{\tilde{\Pi}})$ satisfies the conditions $\mathbf{P}$, and if $(\mathbf{M}, \mathbf{b}) \not\sim (\mathbf{\tilde{M}}, \mathbf{\tilde{b}})$, then:
\[R(\mathbf{\tilde{M}}, \mathbf{\tilde{b}}) > 0.\]
\end{coro}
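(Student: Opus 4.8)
The plan is to argue by contraposition: assume $R(\mathbf{\tilde M},\mathbf{\tilde b})=0$ and derive $(\mathbf M,\mathbf b)\sim(\mathbf{\tilde M},\mathbf{\tilde b})$, which contradicts the hypothesis. The only real work is to convert the probabilistic statement "$R=0$" into the deterministic statement "$f_{\mathbf M,\mathbf b}=f_{\mathbf{\tilde M},\mathbf{\tilde b}}$ on $\Omega$", after which Theorem \ref{main theorem-main} applies verbatim (its hypotheses on $\boldsymbol\Pi$, $\boldsymbol{\tilde\Pi}$ and the conditions $\mathbf P$ are exactly what is assumed here).

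First I would use nonnegativity of $L$: since $L\geq 0$ and $\mathbb{E}[L(f_{\mathbf{\tilde M},\mathbf{\tilde b}}(X),Y)]=0$, the integrand vanishes almost surely, so $L(f_{\mathbf{\tilde M},\mathbf{\tilde b}}(X),Y)=0$ with probability one. Substituting $Y=f_{\mathbf M,\mathbf b}(X)$ and invoking the assumed implication $L(y,y')=0\Rightarrow y=y'$ gives $f_{\mathbf{\tilde M},\mathbf{\tilde b}}(X)=f_{\mathbf M,\mathbf b}(X)$ almost surely. Let $C=\{x\in\mathbb{R}^{n_K}:f_{\mathbf M,\mathbf b}(x)=f_{\mathbf{\tilde M},\mathbf{\tilde b}}(x)\}$. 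By Proposition \ref{f_k and g_k are piecewise linear-main} (with $k=0$), $f_{\mathbf M,\mathbf b}$ and $f_{\mathbf{\tilde M},\mathbf{\tilde b}}$ are continuous piecewise linear, hence continuous, so $C$ is closed. We have just shown $\mathbb{P}(X\in C)=1$, i.e. $C$ is a closed set of full probability; since the support of $X$ is by definition the smallest such closed set, $\mathrm{supp}(X)\subset C$. By hypothesis $\Omega\subset\mathrm{supp}(X)$, so $\Omega\subset C$, which is precisely $f_{\mathbf M,\mathbf b}(x)=f_{\mathbf{\tilde M},\mathbf{\tilde b}}(x)$ for all $x\in\Omega$.

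Now all hypotheses of Theorem \ref{main theorem-main} are met: two networks with the same architecture, admissible lists $\boldsymbol\Pi$ and $\boldsymbol{\tilde\Pi}$, the conditions $\mathbf P$ satisfied for $(\mathbf M,\mathbf b,\Omega,\boldsymbol\Pi)$ and $(\mathbf{\tilde M},\mathbf{\tilde b},\Omega,\boldsymbol{\tilde\Pi})$, and coincidence of the two implemented functions on $\Omega$. The theorem yields $(\mathbf M,\mathbf b)\sim(\mathbf{\tilde M},\mathbf{\tilde b})$, contradicting the assumption $(\mathbf M,\mathbf b)\not\sim(\mathbf{\tilde M},\mathbf{\tilde b})$. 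Hence $R(\mathbf{\tilde M},\mathbf{\tilde b})\neq 0$, and since $R\geq 0$ this means $R(\mathbf{\tilde M},\mathbf{\tilde b})>0$.

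The only subtle point — the "main obstacle", though it is mild — is the passage from almost-sure agreement to agreement on all of $\Omega$: it genuinely uses both the continuity of the two network functions (to make $C$ closed) and the characterization of $\mathrm{supp}(X)$ as the smallest closed full-measure set, together with the standing hypothesis $\Omega\subset\mathrm{supp}(X)$. Everything else is bookkeeping and a direct citation of Theorem \ref{main theorem-main}.
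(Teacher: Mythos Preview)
Your proof is correct. The paper argues in the opposite direction: it starts from $(\mathbf M,\mathbf b)\not\sim(\mathbf{\tilde M},\mathbf{\tilde b})$, invokes the contrapositive of Theorem~\ref{main theorem-main} to obtain a single point $x\in\Omega$ with $f_{\mathbf M,\mathbf b}(x)\neq f_{\mathbf{\tilde M},\mathbf{\tilde b}}(x)$, uses continuity to get a whole ball $B(x,r)$ of disagreement, and then uses $x\in\Omega\subset\mathrm{supp}(X)$ to conclude $\mathbb P_X(B(x,r))>0$, hence $R>0$. Your route instead assumes $R=0$, passes from almost-sure agreement to agreement on a closed set containing $\mathrm{supp}(X)\supset\Omega$, and then applies Theorem~\ref{main theorem-main} in its direct form. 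The two arguments use exactly the same ingredients (continuity of the network functions, the definition of support, Theorem~\ref{main theorem-main}) but traverse the implication chain in opposite directions; the paper's version is slightly more concrete in that it exhibits where the positive risk comes from, while yours packages the measure-theoretic step cleanly via the smallest-closed-full-measure-set characterization of the support.
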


 For the proof, see Corollary \ref{risk minimization formulation} in Appendix \ref{Appendix-B} and its proof in Section \ref{Proof of the corollary-sec}.

\subsection{Discussion on the conditions}\label{discussion of the conditions-sec}

This section is dedicated to discussing the conditions $\Pbf$. We start by explaining the different conditions $\Pbf.a) - \Pbf.d)$ and their purpose in Section \ref{the conditions explained-sec}. Then, in Section \ref{illustrative counter-examples-sec}, we provide counter-examples illustrating how non-identifiability arises when they are not satisfied. Finally, we compare the conditions $\Pbf$ to the state of the art in Sections \ref{comparisonexisting-sec} and \ref{comparative example-sec}.

\subsubsection{The conditions explained}\label{the conditions explained-sec}

Let us explain the conditions $\mathbf{P}$. The first condition, $\mathbf{P}.a)$, requires the matrix $M^k \in \RR^{n_k \times n_{k+1}}$ to have full row rank. This implies that for all $k \in \llbracket 1, K-1 \rrbracket$, the layer $k$ has no more neurons than its predecessor, the layer $k+1$:
\[n_{k} \leq n_{k+1}.\]
Once this is satisfied, the condition is mild in the sense that it is satisfied for all matrices except a set of matrices of empty Lebesgue measure. 

As a first remark about $\Pbf.b)$, notice that by taking $k=K-1$, it implies that $\ring{\Omega} = \ring{\Omega}_K \neq \emptyset$. Thus, in the main result, the set $\Omega$ over which the function implemented by the network is assumed to be known needs to have nonempty interior. In particular, $\Omega$ cannot be a finite sample set. This limitation is already present in \cite{pmlr-v119-rolnick20a}, which assumes an access to the function on the whole input space and \cite{phuong2020functional} which considers the function of the network on a bounded open nonempty domain. However, as we discuss in the conclusion, it seems possible to establish a result for a finite $\Omega$, and the conditions formulated here should be a basis for future work.

The conditions $\Pbf.b), \Pbf.c)$ and $\Pbf.d)$ must be satisfied for all $k \in \lb 1, K-1 \rb$, but to give a sense of them, let us see what they mean for $k = K-1$.

\begin{figure}
\center
\begin{tabular}{l|l}
\includegraphics[scale=0.51]{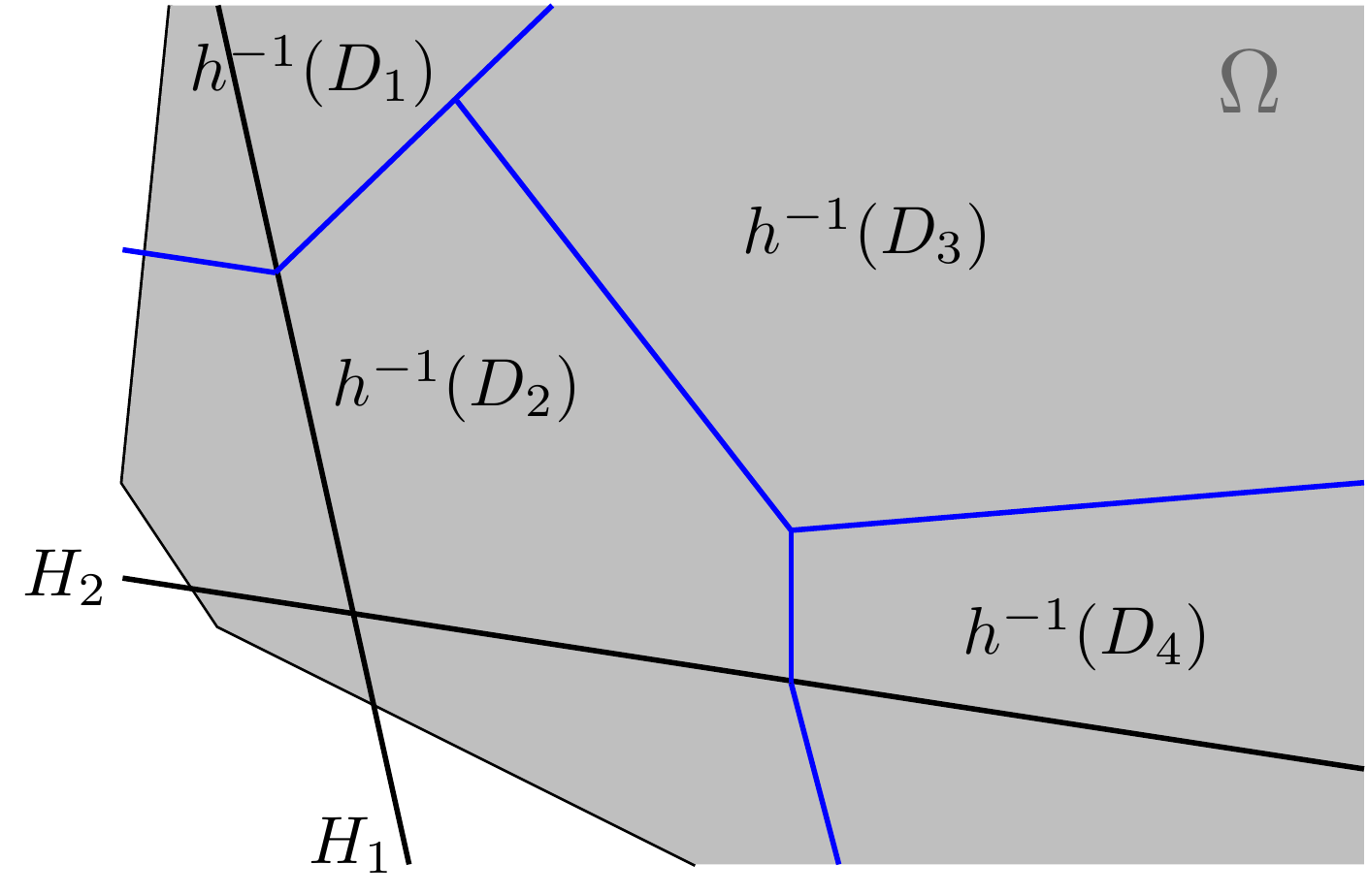} \hspace{15pt} & \hspace{15pt} \includegraphics[scale=0.51]{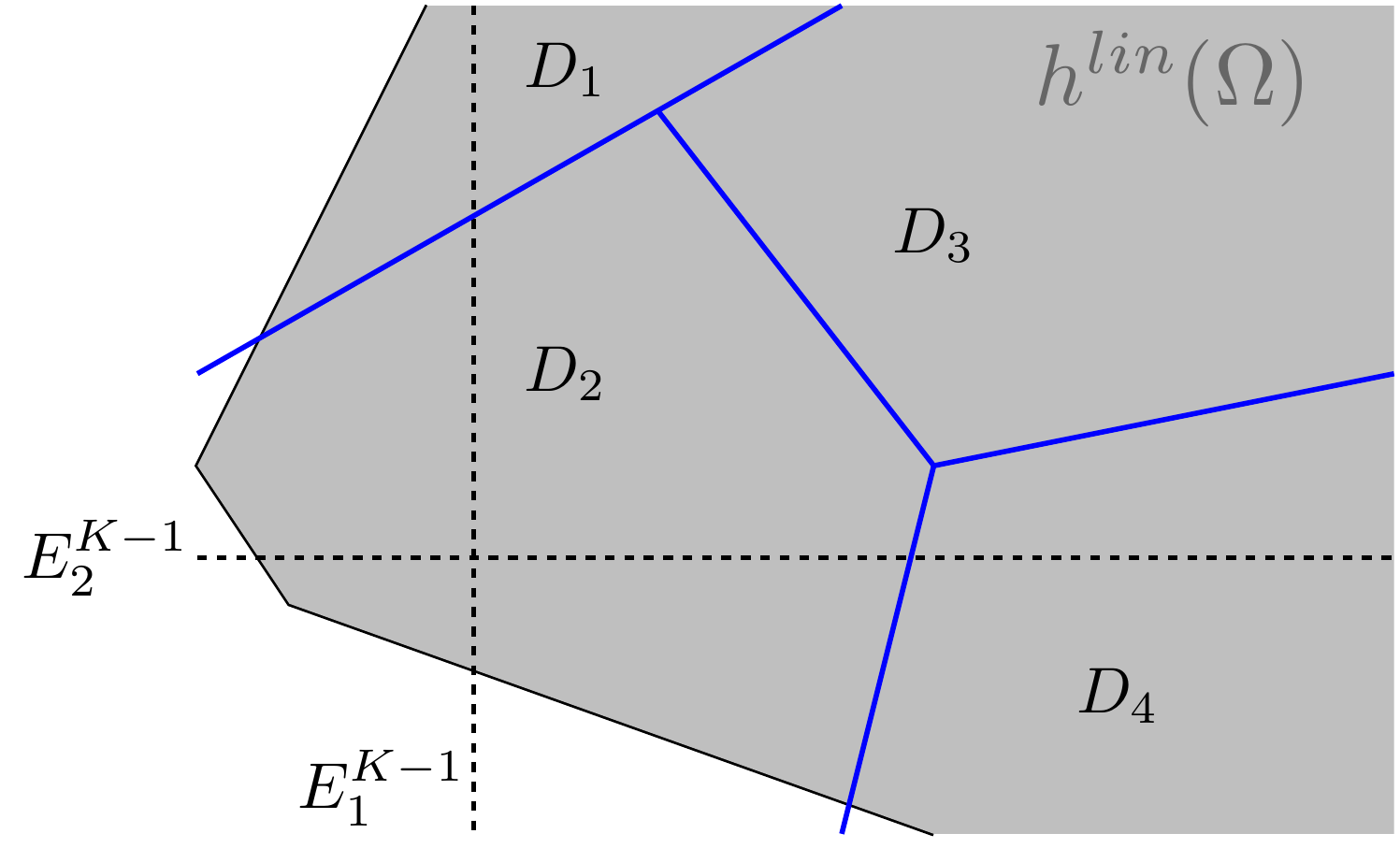} \\
$\mathbb{R}^{n_{K}}$ \hspace{15pt} &  \hspace{15pt} $\mathbb{R}^{n_{K-1}}$
\end{tabular}
\caption{Left. In $\RR^{n_K}$, the inverse image by $h_{K-1}$ of the polyhedra $D \in \Pi_{K-1}$. To make the figure lighter we write $h$ instead of $h_{K-1}$. The grey zone represents $\Omega$. For $i \in \{1,2\}$, $H_i$ is the hyperplane defined by the equation $M^{K-1}_{i,.}x + b^{K-1}_i = 0$. As a direct consequence, we have $h(H_i) \subset E_i^{K-1}$. Right. In $\RR^{n_{K-1}}$, the admissible polyhedra $D \in \Pi_{K-1}$ with respect to $g_{K-1}$. The grey zone corresponds to the image $h^{lin}(\Omega) = M^{K-1} \Omega + b^{K-1}$.}
\label{Geometry of the functions-main}
\end{figure}

As explained in Section \ref{CPL functions-sec}, the function implemented by a ReLU network is continuous piecewise linear: we can divide the input space $\mathbb{R}^{n_{K}}$ into polyhedral regions, over each of which the function is linear. We take advantage of this structure to acquire information about the parameters of the network. The boundaries of the polyhedral regions are of particular interest. They are made of pieces of hyperplanes, and they roughly correspond to the points where the function implemented by the network is not differentiable. We use this non differentiability property to identify the boundaries. We go from one linear region to another when there is a change of sign in the pre-activation value (input of $\sigma$) of one hidden neuron. The boundary between two linear regions is thus associated to a particular neuron of a particular hidden layer.

We separate the function implemented by the first layer of the network and the function implemented by the rest of the layers thanks to the functions defined in \eqref{hk-main} and \eqref{definition fk and gk-main}, writing 
\[f_{\Mbf, \bbf} = g_{K} = g_{K-1} \circ h_{K-1},\]
\[\mathbb{R}^{n_{K}} \ \overset{h_{K-1}}{\longrightarrow} \ \mathbb{R}^{n_{K-1}} \ \overset{g_{K-1}}{\longrightarrow} \ \mathbb{R}^{n_0}.\]

The goal is first to identify the weights and bias of the first layer, $M^{K-1}$ and $b^{K-1}$. To do so, we focus on the boundaries associated to the neurons in the first hidden layer. These `first-order' boundaries are hyperplanes defined by the equations $M^{K-1}_{i,.} x + b^{K-1}_i = 0$, for all $i \in \llbracket 1, n_{K-1} \rrbracket$.  The conditions $\mathbf{P}.b)$, $\mathbf{P}.c)$ and $\mathbf{P}.d)$ are made to ensure that we are able to identify the hyperplanes, and consequently, the parameters $M^{K-1}$ and $b^{K-1}$. The two relevant spaces to visualize the conditions are the input space, $\mathbb{R}^{n_{K}}$, and the first hidden space, $\mathbb{R}^{n_{K-1}}$, which are represented in Figure \ref{Geometry of the functions-main}. Let us explain the conditions $\Pbf.b), \Pbf.c)$ and $\Pbf.d)$.

\begin{itemize}
\item[$\Pbf.b)$] The condition $\mathbf{P}.b)$ in the case $k = K-1$ requires the hyperplane defined by the equation $M^{K-1}_{i,.} x + b^{K-1} = 0$ to intersect $\ring{\Omega}_{K} = \ring{\Omega}$. Indeed, we only consider the function implemented by the network over $\Omega$, so the hyperplane must intersect $\ring{\Omega}_{K}$ in order to be detectable as a non differentiability. In the example of Figure \ref{Geometry of the functions-main}, we see that the two such hyperplanes, which are $H_1$ and $H_2$, intersect $\Omega$, so the condition is satisfied.

\item[$\Pbf.c)$] Consider a polyhedron $D \in \Pi_{K-1}$. The function $g_{K-1}$ is linear over $D$, and using the notations defined in \eqref{V_k definition-main}, we have for all $u \in D$, 
\begin{equation}\label{expression of g_(K-1) over D}
g_{K-1}(u) = V^{K-1}(D) u + c^{K-1}(D).
\end{equation} 
For all $x \in \mathbb{R}^{n_K}$ such that $h_K(x) \in D$, using \eqref{expression of g_(K-1) over D} we obtain:
\[f_{\Mbf, \bbf}(x) = g_{K-1} \circ h_{K-1}(x) = \sum_{i=1}^{n_{K-1}} V^{K-1}_{.,i} (D) \sigma \left(M^{K-1}_{i,.}x + b^{K-1}_i \right) + c^{K-1}(D). \] 
In particular, at the points $x$ such that $M^{K-1}_{i,.}x + b^{K-1}_i = 0$, the function $\sigma(M^{K-1}_{i,.} x + b^{K-1}_i)$ is not differentiable, and this non differentiability can only be reflected in the function $f_{\Mbf, \bbf}$ if $V^{K-1}_{.,i} (D) \neq 0$. The condition $\mathbf{P}.c)$ ensures that. In the example of Figure \ref{Geometry of the functions-main} (right part), we see that $D_1$ intersects $E^{K-1}_1$ so to satisfy $\mathbf{P}.c)$, we must have $V^{K-1}_{.,1}(D_1) \neq 0$. Similarly, $D_2$ intersects $E^{K-1}_1$ and $E^{K-1}_2$ so we must have $V^{K-1}_{.,1}(D_2) \neq 0$ and $V^{K-1}_{.,2}(D_2) \neq 0$, and the polyhedron $D_4$ intersects $E_2^{K-1}$ so we must have $V^{K-1}_{.,2}(D_4) \neq 0$.

\item[$\Pbf.d)$] For the last condition, $\mathbf{P}.d)$, we consider the inverse images $h_{K-1}^{-1}(D)$, for all the polyhedra $D \in \Pi_{K-1}$. Since $h_{K-1}$ is piecewise linear and $D$ is a closed polyhedron,  $h_{K-1}^{-1}(D)$ is a finite union of closed polyhedra (see the first point of Proposition \ref{boundary included in a union of hyperplanes} in the appendix). In particular, its boundary $\partial h_{K-1}^{-1}(D)$ is made of pieces of hyperplanes. We require the union of these boundaries not to contain any full hyperplane (within the domain $\Omega$). In the example of Figure \ref{Geometry of the functions-main}, the condition is satisfied.
\end{itemize}

\subsubsection{Illustrative counter-examples}\label{illustrative counter-examples-sec}

To illustrate the necessity for the conditions in $\Pbf$, we give for each of the conditions $\Pbf.a) - \Pbf.d)$ a simple example of a parameterization $(\Mbf, \bbf)$ and a set $\Omega$ which do not satisfy it, and we show that $(\Mbf, \bbf)$  is not identifiable by constructing a parameterization $(\tilde{\Mbf}, \tilde{\bbf})$ that is not equivalent to $(\Mbf, \bbf)$, but such that $f_{\tilde{\Mbf}, \tilde{\bbf}}$ coincides with $f_{\Mbf, \bbf}$ over $\Omega$. These four examples illustrate the behaviors we want to prevent with the conditions $\Pbf$.

\begin{exnum}
We consider an architecture with one hidden layer, i.e. $K=2$, with $n_2 = 2$, $n_1 = 3$, $n_0 = 1$. We consider the parameterization $(\Mbf, \bbf)$ defined as follows.
\[M^1 = \begin{pmatrix} 0 & 2 \\ 1 & -1 \\ -1 & - 1 \end{pmatrix} \qquad b^1 = \begin{pmatrix} 0 \\ 0 \\ 0 \end{pmatrix},\]
\[M^0 = \begin{pmatrix} 1 & 1 & 1 \end{pmatrix} \qquad b^0 = 0.\]
For this example, we consider $\Omega = \RR$.

The condition $\Pbf.a)$ is not satisfied: the matrix $M^1$ cannot have full row rank since its dimension is $3 \times 2$, and more specifically we have the relation 
\begin{equation}\label{example1-the sum of the lines of M^1 are zero}
M^1_{1,.} + M^1_{2,.} + M^1_{3,.} = 0.
\end{equation}
Let us define $\tilde{M}^1 = - M^1$ and $\tilde{\Mbf} = (\tilde{M}^1, M^0)$. Let us show that $f_{\tilde{\Mbf}, \bbf} = f_{\Mbf, \bbf}$.

Let $x \in \RR^2$. We have
\[f_{\Mbf, \bbf}(x) = \sigma\left( M^1_{1,.} x \right) + \sigma\left( M^1_{2,.} x\right) + \sigma\left( M^1_{3,.} x\right).\]
There exist activations $\epsilon_1, \epsilon_2, \epsilon_3 \in \{ 0,1 \}$, depending on $x$, such that\\
\begin{equation}\label{example1-expression of f}f_{\Mbf, \bbf}(x) = \epsilon_1 M^1_{1,.} x  + \epsilon_2 M^1_{2,.} x + \epsilon_3 M^1_{3,.} x.
\end{equation}

Since $\tilde{M}^1 = - M^1$ and $b^1 = 0$, the signs of the activations are switched in $f_{\tilde{\Mbf}, \bbf}$ and thus
\begin{equation*}
\begin{aligned}
f_{\tilde{\Mbf}, \bbf}(x) & = (1 - \epsilon_1) \tilde{M}^1_{1,.} x  + (1 - \epsilon_2) \tilde{M}^1_{2,.} x + (1 - \epsilon_3) \tilde{M}^1_{3,.} x \\
 & = \sum_{i=1}^3 (1 - \epsilon_i) (-M^1_{i,.}x)\\
 & = \sum_{i=1}^3\epsilon_i M^1_{i,.}x - (\sum_{i=1}^3 M^1_{i,.} )x \\
 & = f_{\Mbf, \bbf}(x),\\
\end{aligned}
\end{equation*}
where we obtain the last equality thanks to \eqref{example1-the sum of the lines of M^1 are zero} and \eqref{example1-expression of f}.

Now since only the positive rescalings are authorized, $(\tilde{\Mbf}, \bbf)$ is not equivalent to $(\Mbf, \bbf)$, which shows that $(\Mbf, \bbf)$ is not identifiable modulo permutation and rescaling.
\end{exnum}

\begin{exnum}
 Let us consider a very simple architecture with one hidden layer and only one neuron per layer, i.e. $n_2 = n_1 = n_0 = 1$. We consider the parameterization $(\Mbf, \bbf_a)$, for $a > 0$, defined by
\[M^1 = 1, \quad b_a^1 = a, \quad M^0 = 1, \quad b_a^0 = -a.\]

The function implemented by the network satisfies, for all $x \in \RR,$ 
\begin{equation}\label{example2-expression of f}
f_{\Mbf, \bbf_a} (x) = \sigma(x + a ) - a = \begin{cases} -a & \text{if } x < - a \\ x & \text{if } x \geq - a. \end{cases}\end{equation}

Let us consider $\Omega = [1, + \infty[$. With such a choice of $\Omega$, none of the parameterizations $(\Mbf, \bbf_a)$  satisfy $\Pbf.b)$, because for all $x \in \Omega$, $M^1x + b_a^1 = x + a > 0$.

For any $a > 0$, for any $x \in \Omega$, we have $x \geq 1 > -a$, so \eqref{example2-expression of f} shows that $f_{\Mbf, \bbf_a}(x) = x$, i.e. the functions implemented by the parameterizations $(\Mbf, \bbf_a)$ all coincide over $\Omega$. However, since \eqref{example2-expression of f} shows they do not implement the same function over $\RR$, Proposition \ref{equivalent networks have same function-main} shows they are not equivalent.
\end{exnum}
\begin{exnum}\label{counter-example P.c)}
We consider an architecture with $2$ hidden layers, that is $K=3$, and again one neuron per layer: $n_3 = n_2 = n_1 = n_0 = 1$. Let us consider the parameterizations $(\Mbf, \bbf_a)$, defined for $a>0$ by
\begin{align*}M^2 = 1, \qquad & b^2_a = a,\\
M^1 = 1, \qquad & b^1_a = -1 - a,\\
M^0 = 1, \qquad & b^0_a = 0.
\end{align*}

We consider $\Omega = \RR$. Let us show that for any $a > 0$ and any admissible $\Pibf$, the condition $\Pbf.c)$ is not satisfied by $(\Mbf, \bbf, \Omega, \Pibf)$ in the case $k = 2$.

Indeed, we have $h_{2,a} (x) = \sigma(M^2x + b^2_a) = \sigma(x+a)$, and thus $\Omega_2 = h_{2,a}(\Omega) = \RR_+$. Further, the expression of $g_{2,a}$ is
\[g_{2,a}(x) = M^0 \sigma(M^1x + b^1_a) + b^0_a = \sigma(x-1-a) = \begin{cases} 0 & \text{if } x \leq 1 + a \\ x-1-a & \text{if } x > 1 + a .\end{cases}\]
For any set of closed polyhedra $\Pi_2$ admissible with respect to $g_{2,a}$, a polyhedron $D \in \Pi_2$ intersecting $E_1^2 = \{ 0 \}$ must satisfy $V^2(D) = 0$ since $g_{2,a}(x) = 0$ for $x \in ] - \infty, 1+a ]$. This contradicts $\Pbf.c)$ for $k=2$.

To exhibit functionally equivalent parameterizations, we now show that for all $a > 0$ and $x \in \RR$, 
\begin{equation}\label{example3-f(x)=sigma(x-1)}
f_{\Mbf, \bbf_a}(x) = \sigma(x - 1).
\end{equation}
Indeed, let $x \in \RR$.
\begin{itemize}
\item if $x \in ] - \infty, -a [$, we have $\sigma(M^2x + b^2_a) = \sigma(x+a) = 0$, so 
\begin{equation*}
\begin{aligned}
f_{\Mbf, \bbf_a}(x) & = M^0 \sigma(M^1 \cdot 0 + b^1_a) + b^0_a \\
& =  \sigma( b^1_a) \\
& = 0 = \sigma(x-1).
\end{aligned}
\end{equation*}
\item if $x \in [-a, + \infty [$, we have $\sigma(M^2x + b^2_a) = \sigma(x+a) = x+a$, so 
\begin{equation*}
\begin{aligned}
f_{\Mbf, \bbf_a}(x) & = M^0 \sigma ( M^1 (x + a) + b^1_a) + b^0_a \\
& = \sigma(x + a - 1 - a) \\
& = \sigma(x - 1).
\end{aligned}
\end{equation*}
\end{itemize}

This shows \eqref{example3-f(x)=sigma(x-1)}. The function $f_{\Mbf, \bbf_a}$ is therefore independent of $a > 0$, but if $a \neq a'$, $(\Mbf, \bbf_a) \not\sim (\Mbf, \bbf_{a'})$.

The lack of identifiability comes here from the fact that we do not `observe' the non differentiability induced by the first hidden neuron, because $V^2(D) = 0$ for $D$ containing $0$. Indeed, if $\Pbf.c)$ was satisfied, we would observe a non differentiability at the point at which the sign of $M^2x + b_a^2$ changes, which is $x = - a$, and we thus would have $f_{\Mbf, \bbf_a} \neq f_{\Mbf, \bbf_{a'}}$ for $a \neq a'$.

We remark that here, even if $\Pbf.c)$ was satisfied, the condition $\Pbf.d)$ would not be satisfied, as we see next in Example \ref{counter-example P.d)}.
\end{exnum}

\begin{exnum}\label{counter-example P.d)}
We consider again the architecture of Example \ref{counter-example P.c)}, wih two hidden layers and one neuron per layer. This time, we consider the parameterizations $(\Mbf, \bbf)$ and $(\tilde{\Mbf}, \tilde{\bbf})$, defined by
\begin{align*}& M^2 = 1, & \qquad & b^2 = 0, \\
& M^1 = -1, & \qquad & b^1 = 1, \\
& M^0 = -1, & \qquad & b^0 = 0,
\end{align*}
and
\begin{align*}& \tilde{M}^2 = -1, & \qquad & \tilde{b}^2 = 1,\\
& \tilde{M}^1 = -1, & \qquad & \tilde{b}^1 = 1,\\
& \tilde{M}^0 = 1, & \qquad & \tilde{b}^0 = -1.
\end{align*}
We can remark without waiting further that $(\Mbf, \bbf) \not\sim ( \tilde{\Mbf}, \tilde{\bbf})$, for instance because $b^0 = 0$, and $\tilde{b}^0 = -1$, and the rescalings do not permit such a transformation.

Let $\Omega = \RR$. Let us consider the sets $\Pi_2 = \{ ] - \infty, 1], [1, + \infty [\}$, $\Pi_1 = \{ \RR \}$ and the list $\Pibf = ( \Pi_1, \Pi_2)$. After showing that $\Pibf$ is admissible with respect to $(\Mbf, \bbf)$, we will first show that $(\Mbf, \bbf)$ does not satisfy the condition $\Pbf.d)$ and we will then show that $f_{\Mbf,\bbf} = f_{\tilde{\Mbf}, \tilde{\bbf}}$.

Let us show that $\Pibf$ is admissible with respect to $(\Mbf, \bbf)$. Indeed, for all $x \in \RR$, we have
\[g_2(x) = M^0 \sigma(M^1 x + b^1) + b^0 = - \sigma(-x + 1).\]
The function $g_2$ is linear over both the intervals $]-\infty, 1]$ and $[1, + \infty[$, so $\Pi_2$ is admissible with respect to $g_2$. The function $g_1$ is linear, so $\Pi_1$ is admissible with respect to $g_1$. 

Let us now show that $(\Mbf, \bbf, \Omega, \Pibf)$ does not satisfy the condition $\Pbf.d)$. Let us first determine $\bigcup_{D \in \Pi_2} \partial h_2^{-1} (D)$. Since $h_2(x) = \sigma(x)$, we have $h_2^{-1}(]- \infty, 1[) = ] - \infty, 1]$ and $h_2^{-1}([1, + \infty[) = [1, + \infty[$. Hence, 
\[\bigcup_{D \in \Pi_2} \partial h_2^{-1} (D) = \{1\} .\]
Now, since $\ring{\Omega}_3 = \ring{\Omega} = \RR$, we have 
\[\ring{\Omega}_3 \cap \{1 \} \subset \bigcup_{D \in \Pi_2} \partial h_2^{-1} (D),\]
and since $\{1\}$ is an affine hyperplane of $\RR$, this shows that $\Pbf.d)$ is not satisfied for $k=2$.

Let us now show that for all $x \in \Omega = \RR$, we have
\begin{equation}\label{example4-expression of f}
f_{\Mbf, \bbf} (x ) = f_{\tilde{\Mbf}, \tilde{\bbf}}(x) = \begin{cases} \sigma(x)-1 & \mbox{if } x \leq 1 \\ 0 & \text{if } x > 1.
\end{cases}
\end{equation}
Let us first determine $f_{\Mbf, \bbf}(x)$, for $x \in \RR$. We have 
\[f_{\Mbf, \bbf}(x) = M^0 \sigma(M^1 \sigma(M^2x + b^2) + b^1) + b^0 = - \sigma(-\sigma(x) + 1).\]
\begin{itemize}
    \item If $x \leq 1$, then $\sigma(x) \leq 1$ and thus $-\sigma(x) + 1 \geq 0$. Thus, $- \sigma(-\sigma(x) + 1) = \sigma(x) - 1$, and  $f_{\Mbf, \bbf}(x) = \sigma(x) - 1.$
    \item If $x > 1$, then $-\sigma(x) + 1 < 0$ and thus $- \sigma(-\sigma(x) + 1) = 0$. We thus have $f_{\Mbf, \bbf}(x) = 0.$
\end{itemize}
Let us now determine $f_{\tilde{\Mbf}, \tilde{\bbf}}(x)$, for $x \in \RR$. We have
\[f_{\tilde{\Mbf}, \tilde{\bbf}}(x) = \tilde M^0 \sigma(\tilde M^1 \sigma(\tilde M^2x + \tilde b^2) + \tilde b^1) + \tilde b^0  = \sigma(-\sigma(-x + 1) + 1) -1.\]
\begin{itemize}
    \item If $x \leq 1$, then $ -x + 1 \geq 0$ and thus $       f_{\tilde{\Mbf}, \tilde{\bbf}}(x)  = \sigma((x -1) + 1) -1  = \sigma(x) - 1$.
    \item If $x > 1$, then $ -x + 1 \leq 0$ and thus $f_{\tilde{\Mbf}, \tilde{\bbf}}(x)  = \sigma(1) - 1 = 0$.
\end{itemize}
This shows \eqref{example4-expression of f}, and as a consequence, $(\Mbf, \bbf)$ is not identifiable.

In this example, the lack of identifiability comes from the fact that the sets of non differentiabilities induced by the first and the second layer are indistinguishable: they are both reduced to a point. This will always be the case for networks with only one neuron per layer and more than one hidden layer. When the input dimension is 2 or higher and the condition $\Pbf.d)$ is satisfied, the non differentiabilities induced by neurons in the first hidden layer are the only ones that correspond to full hyperplanes, and this is how they can be identified, as illustrated for instance in the example of Section \ref{comparative example-sec}.

\end{exnum}

\subsubsection{Comparison with the existing work}\label{comparisonexisting-sec}

To our knowledge, there are only two existing results on global identifiability of deep ReLU networks (with bias), as we consider here, exposed in the recent contributions \cite{phuong2020functional} and \cite{pmlr-v119-rolnick20a}. Let us compare our hypotheses with theirs.

The authors of \cite{phuong2020functional} introduce two notions: the notion of \textit{general} network and the notion of \textit{transparent network}. They note the fact that some boundaries of non differentiablity bend over some others to build a graph of dependency. The main result in \cite{phuong2020functional} applies to networks whose number of neurons per layer $n_k$ is non-increasing, as is the case in the present paper, that are transparent and general, and for which the graphs of dependency of the functions $g_{k}$ satisfy additional technical conditions. 

It can be verified that these hypotheses imply our conditions $\mathbf{P}.a)$, $\mathbf{P}.b)$ and $\mathbf{P}.c)$, which makes $\mathbf{P}.a)$, $\mathbf{P}.b)$ and $\mathbf{P}.c)$ more applicable.

When it comes to our last condition $\mathbf{P}.d)$, it can be compared to the technical conditions on the graph of dependency. These conditions address the way the boundaries associated to some neurons bend over the boundaries associated to neurons in previous layers. $\mathbf{P}.d)$ and this set of conditions are different, and neither implies the other. 

The result exposed in \cite{pmlr-v119-rolnick20a} has a main strength compared to \cite{phuong2020functional} and to us: it does not require the number of neurons per layer to be non-increasing. However, when it comes to the intersection of boundaries of linear regions, it requires each boundary, associated to some neuron, to intersect the boundaries associated to all the neurons in the previous layer, which appears to be a strong hypothesis to us. In comparison, we ask each boundary to intersect at least one of the boundaries associated to a neuron in a previous layer. Also, in \cite{pmlr-v119-rolnick20a}, the function is supposed to be known on the whole input space, while \cite{phuong2020functional} as well as us propose conditions on a domain $\Omega$ such that the knowledge of the function on $\Omega$ is enough. In both cases $\Omega$ has nonempty interior. \cite{stock2022embedding, bonalocal} open the way for considering a finite $\Omega$ by giving conditions of local identifiability in that case. To our knowledge global identifiability from a finite set has not been tackled yet for deep ReLU networks.

\subsubsection{A simple comparative example}\label{comparative example-sec}

To shed a better light on the interest of the conditions $\Pbf$, we describe in this section a simple network parameterization for which the conditions $\Pbf$ apply, in contrast to the conditions described in \cite{phuong2020functional, pmlr-v119-rolnick20a}. 

Let us consider a network architecture with $2$ hidden layers (i.e. $K = 3$) and $2$ neurons per layer, except the output layer containing $1$ neuron: $n_3 = n_2 = n_1 = 2$ and $n_0 = 1$. Let us consider the parameterization $\mathbf{(M,b)}$ defined by 
\[M^2 = \begin{pmatrix}
1 & 0 \\ 0 & 1
\end{pmatrix}, \quad M^1 = \begin{pmatrix}
1 & -1 \\ -1 & 2
\end{pmatrix}, \quad M^0 = \begin{pmatrix}
1 & 1
\end{pmatrix}, \]

\[ b^2 = \begin{pmatrix}
0 \\ 0
\end{pmatrix}, \quad b^1 = \begin{pmatrix}
-1 \\ 2
\end{pmatrix}, \quad b^0 = 0. \]
The network implements a function $f_{\mathbf{M,b}} : \RR^2 \rightarrow \RR$. Here we simply consider $\Omega = \RR^2$. 

First, we are going to show that there exists a list $\Pibf$ that is admissible with respect to $(\Mbf, \bbf)$, and such that $(\Mbf, \bbf, \Omega, \Pibf)$ satisfies the conditions $\Pbf$. Then, we shall discuss why this network parameterization does not satisfy the conditions in \cite{phuong2020functional, pmlr-v119-rolnick20a}.

Let us define the list $\Pibf$ as follows. For $\epsilon_1, \epsilon_2 \in \{ -1, 1 \}$, we denote by $D_{\epsilon_1, \epsilon_2}$ the closed polyhedron satisfying, for all $x \in \RR^2$:
\begin{equation}\label{definition of D_epsilon} x \in D_{\epsilon_1, \epsilon_2} \quad \Leftrightarrow \quad \begin{cases} \epsilon_1 \left( M^1_{1, .} x + b^1_1 \right) \geq 0 \\ \epsilon_2 \left( M^1_{2,.} x + b^1_2 \right) \geq 0. \end{cases} \end{equation}
These $4$ polyhedra are displayed in Figure \ref{figure - reciprocal images of polyhedrons}. In other words, the polyhedron to which $x$ belongs depends on the sign of both components of the vector $M^1x + b^1$. We define the set $\Pi_2 = \{ D_{1,1}, D_{1, -1}, D_{-1, 1}, D_{-1, -1} \}$.
We also define the set $\Pi_1 = \{\RR^2\}$, containing the single polyhedron $\RR^2$, and we denote $\Pibf = (\Pi_1, \Pi_2)$. Let us show that $\Pibf$ is admissible with respect to $(\Mbf, \bbf)$. Indeed, the closed polyhedra of $\Pi_2$ cover $\RR^2$. Furthermore, their interior is nonempty. Finally, for all $x \in \RR^2$, we have
\begin{align}\label{example-expression of g2}
g_2(x) & = M^0 \sigma \left(M^1x + b^1 \right) + b^0 \nonumber \\
& = \sigma \left( M^1_{1,.} x + b^1_1 \right) + \sigma \left( M^1_{2,.} x + b^1_2 \right). 
\end{align}
We derive from \eqref{example-expression of g2}, \eqref{definition of D_epsilon} and the definition of the ReLU activation that for all $D_{\epsilon_1, \epsilon_2} \in \Pi_2$, the function $g_2$ is affine over $D_{\epsilon_1, \epsilon_2}$ of the form $g_2(x) = V^2(D_{\epsilon_1, \epsilon_2})x + c^2(D_{\epsilon_1, \epsilon_2})$, with the following values
\begin{equation}\label{example-values of V^2 and c^2}
\begin{aligned}
&V^2(D_{1,1})  = M^1_{1,.} + M^1_{2,.} = \begin{pmatrix} 0 & 1 \end{pmatrix} & \qquad & c^2(D_{1,1}) =  b^1_1 + b^1_2 = 1  \\
&V^2(D_{1,-1})  = M^1_{1,.} = \begin{pmatrix} 1 & -1 \end{pmatrix}  & & c^2(D_{1,-1}) = b^1_1 = -1\\
&V^2(D_{-1,1})  =  M^1_{2,.} = \begin{pmatrix} -1 & 2 \end{pmatrix}  & & c^2(D_{-1,1}) =  b^1_2 = 2\\
&V^2(D_{-1,-1})  =  \begin{pmatrix} 0 & 0 \end{pmatrix}  & & c^2(D_{-1,-1}) =0.
\end{aligned}
\end{equation}
This shows that the set of closed polyhedra $\Pi_2$ is admissible with respect to $g_2$, and the values in \eqref{example-values of V^2 and c^2} correspond to those of the definition \eqref{V_k definition-main}. Moreover, since $g_1$ is affine, the set $\Pi_1$ is trivially admissible with respect to $g_1$. We conclude that the list $\Pibf$ is admissible with respect to $(\Mbf, \bbf)$.

Let us show that $(\Mbf, \bbf, \Omega, \Pibf)$ satisfies the conditions $\Pbf$.

The conditions $\mathbf{P}$ must hold for $k \in \lb 1, K-1 \rb$, so in our case, for $k =2$ and $k=1$. To check them, we will need to compute $\Omega_3$ and $\Omega_2$. Recalling the definition in \eqref{def omega_k-main}, we have $\Omega_3 = \Omega = \RR^2$. Then, since $h_2(x) = \sigma\left( M^2x + b^2 \right) = \sigma(x)$, we have $\Omega_2 = \sigma(\RR^2) = (\RR_+)^2$.

\begin{figure}
\center
\begin{tabular}{c|cc}
\includegraphics[scale=0.6]{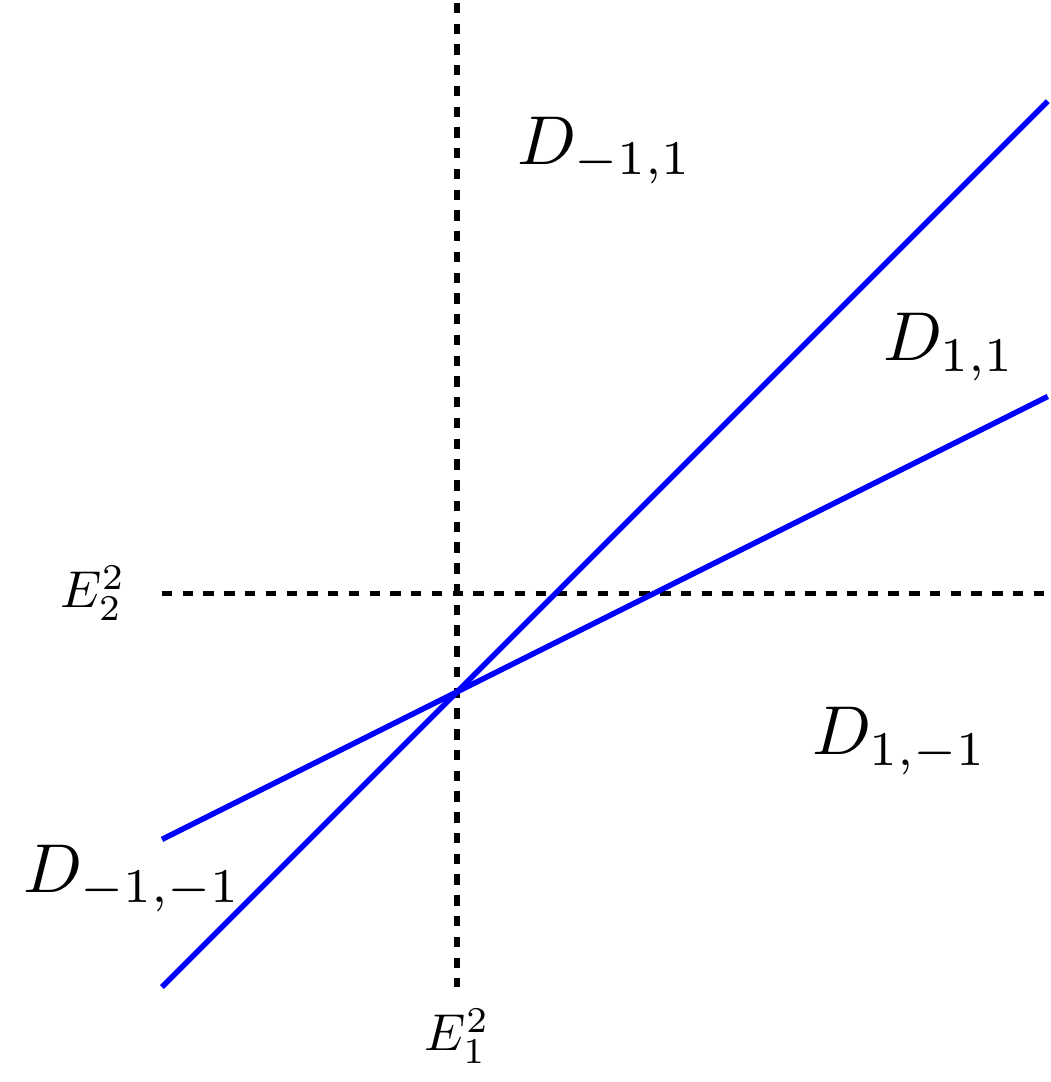} \hspace{20pt}
&  \hspace{10pt} \includegraphics[scale=0.6]{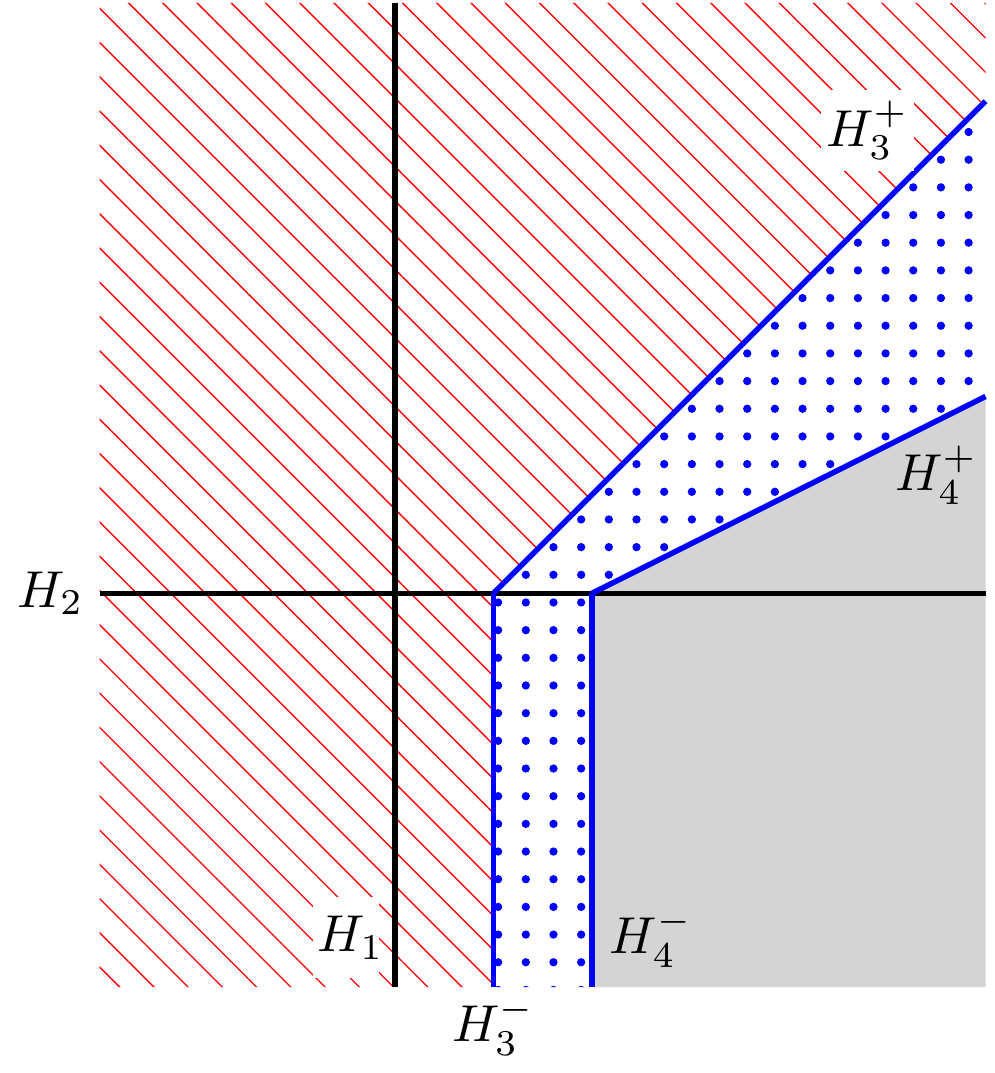} 
& \includegraphics[scale=0.5]{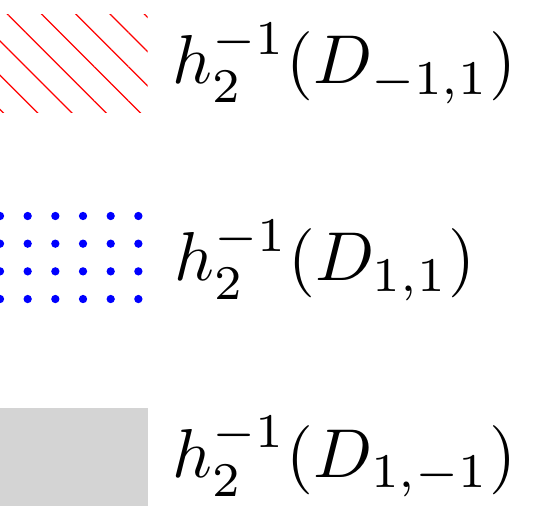}
\end{tabular}
\caption{Left. The closed polyhedra of $\Pi_2$. Right. The reciprocal images by $h_2$ of the closed polyhedra of $\Pi_2$.}\label{figure - reciprocal images of polyhedrons}
\end{figure}

Let us now check the conditions one by one.
\begin{itemize}
\item[$\Pbf.a)$] The matrices $M^2$ and $M^1$ are both full row rank, so $\Pbf.a)$ is satisfied for $k=2$ and $k=1$.
\item[$\Pbf.b)$] Let us first show the condition for $k = 2$. We have $\ring{\Omega}_3 = \ring{\Omega} = \RR^2$, so taking $x_1 = (0,1)^T \in \ring{\Omega}_3$ and $x_2 = (1,0)^T \in \ring{\Omega}_3$, we find
\[\begin{cases} M^2_{1, .} x_1 + b^2_1 = \begin{pmatrix} 1 & 0 \end{pmatrix} \begin{pmatrix} 0 \\ 1 \end{pmatrix}  = 0 \\ 
M^2_{2, .} x_2 + b^2_2 = \begin{pmatrix} 0 & 1 \end{pmatrix} \begin{pmatrix} 1 \\ 0 \end{pmatrix}  = 0.\end{cases}\] 
Let us now show the condition for $k=1$. We have $\ring{\Omega}_{k+1} = \ring{\Omega}_2 = (\RR_+^*)^2$. Let us choose $x_3 = (2,1)^T \in \ring{\Omega}_2$ and $x_4 = (4, 1)^T \in \ring{\Omega}_2$. We have
\[\begin{cases} M^1_{1, .} x_3 + b^1_1 = \begin{pmatrix} 1 & -1 \end{pmatrix} \begin{pmatrix} 2 \\ 1 \end{pmatrix} -1  = 0 \\ 
M^1_{2, .} x_4 + b^1_2 = \begin{pmatrix} -1 & 2 \end{pmatrix} \begin{pmatrix} 4 \\ 1 \end{pmatrix} +2 = 0.\end{cases}\]
This shows that $\Pbf.b)$ is satisfied for $k=2$ and $k=1$.
\item[$\Pbf.c)$] For $k=2$, let us recall from \eqref{example-values of V^2 and c^2} the values of $V^2(D)$ for all $D \in \Pi_2$. 
In the case of $V^2(D_{1,-1})$ and $V^2(D_{-1,1})$, $\Pbf.c)$ is clearly satisfied. When it comes to $D_{1,1}$, we have $V^2_{.,1}(D_{1,1}) = 0$, but $D_{1,1}$ does not intersect $E^2_1$ in $\Omega_2$. Finally, we have $V^2_{.,1}(D_{-1,-1}) = V^2_{.,2}(D_{-1,-1}) = 0$, but $D_{-1,-1} \cap \Omega_2 = \emptyset$.

We thus conclude that $\Pbf.c)$ is satisfied for $k=2$. 

The case $k=1$ is easier, $\Pi_1 = \{ \RR^2 \}$ and for all $x \in \RR^2$, we have $g_1(x) = M^0x + b^0$, so we have $V^1(\RR^2) = M^0 = \begin{pmatrix} 1 & 1 \end{pmatrix}$, and $\Pbf.c)$ is clearly satisfied.

\item[$\Pbf.d)$] Here, the case $k=1$ is trivial since $\Pi_1 = \{ \RR^2 \}$, and $h_1^{-1}( \RR^2) = \RR^2$, and thus $\partial h_1^{-1} ( \RR^2)= \emptyset$.

We thus only need to study the case $k =2$. Let us first determine the sets $h_2^{-1}(D)$, for $D \in \Pi_2$. We remind that for all $x \in \RR^2$, $h_2(x) = \sigma(x)$.

For this, let us divide $\RR^2$ in $3$ regions. Let $x = (x_1,x_2) \in \RR^2$.
\begin{itemize}
\item If $x_1 < 0$, then $h_2(x) = (0, \sigma(x_2))^T$. We thus have 
\[M^1 h_2(x) + b^1 = \begin{pmatrix} -1 ( \sigma(x_2) + 1 ) \\ 2 ( \sigma(x_2)+1) \end{pmatrix}.\]
Since $\sigma(x_2) \geq 0$, we see that $h_2(x) \in D_{-1,1}$.
\item If $x_1 \geq 0$ and $x_2 < 0$, then $h_2(x) = (x_1,0)^T$. We thus have 
\[M^1 h_2(x) + b^1 = \begin{pmatrix} x_1 - 1 \\ - x_1 + 2 \end{pmatrix}.\]
There are $3$ possibilities: if $x_1 \leq 1$, then $h_2(x) \in D_{-1,1}$, if $1 \leq x_1 \leq 2$, $h_2(x) \in D_{1,1}$, and if $2 \leq x_1$, $h_2(x) \in D_{1, -1}$.
\item If $x_1, x_2 \geq 0$, then $h_2(x) = x$ and for all $D_{\epsilon_1, \epsilon_2} \in \Pi_2$, $h_2(x) \in D_{\epsilon_1, \epsilon_2} \Longleftrightarrow x \in D_{\epsilon_1, \epsilon_2}$.
There are $3$ possibilities, $x \in D_{-1,1}, x \in D_{1,-1}$ and $x \in D_{1,1}$ since $D_{-1,-1} \cap (\RR_+)^2 = \emptyset$.
\end{itemize}
Summarizing, we find (see also Figure \ref{figure - reciprocal images of polyhedrons}):

\[h_2^{-1}(D_{-1,1}) = \RR_- \times \RR \quad \cup \quad [0,1] \times \RR_- \quad \cup \quad (\RR_+)^2 \cap D_{-1,1}\]
\[h_2^{-1}(D_{1,1}) = [1,2] \times \RR_- \quad \cup \quad (\RR_+)^2 \cap D_{1,1}\]
\[h_2^{-1}(D_{1,-1}) = [2, + \infty[ \times \RR_- \quad \cup \quad (\RR_+)^2 \cap D_{1,-1}\]
\[h_2^{-1}(D_{-1,-1}) = \emptyset.\]

To express the boundaries of these regions, we define the following pieces of hyperplanes:

\[H_3^+ = \{ x = (x_1,x_2) \in \RR^2, \quad  x_1 \geq 0, x_2 \geq 0, M^1_{1,.} x + b^1_1 = 0\}\]
\[H_4^+ = \{ x = (x_1,x_2) \in \RR^2, \quad  x_1 \geq 0, x_2 \geq 0, M^1_{2,.} x + b^1_2 = 0\}\]
\[H_3^- = \{ x = (x_1,x_2) \in \RR^2, \quad  x_1 = 1, x_2 \leq 0\}\]
\[H_4^- = \{ x = (x_1,x_2) \in \RR^2, \quad  x_1 = 2 , x_2 \leq 0\}.\]

We have 

\[\partial h_2^{-1}(D_{-1,1}) = H_3^- \cup H_3^+\]
\[\partial h_2^{-1}(D_{1,1}) = H_3^- \cup H_3^+ \cup H_4^- \cup H_4^+\]
\[\partial h_2^{-1}(D_{1,-1}) = H_4^- \cup H_4^+\]
\[\partial h_2^{-1}(D_{-1,-1}) = \emptyset,\]

and thus,
\begin{equation}\label{border of the reciprocal images of polyhedrons} \bigcup_{D \in \Pi_2} \partial h_2^{-1}(D) =  H_3^- \cup H_3^+ \cup  H_4^- \cup H_4^+.\end{equation}

Let us check the condition $\Pbf.d)$ for $k=2$. Since $\Omega_3 = \RR^2$, here $\ring{\Omega}_3 \cap H = H$. The condition is thus satisfied if and only if $\bigcup_{D \in \Pi_2} \partial h_2^{-1}(D)$ does not contain any full hyperplane $H$, and \eqref{border of the reciprocal images of polyhedrons} shows that it is the case. The condition $\Pbf.d)$ is satisfied.
\end{itemize}

Let us now discuss the conditions given in \cite{phuong2020functional, pmlr-v119-rolnick20a} for this example. Let us first define the following hyperplanes:
\[H_1 = \{ x \in \RR^2, M^2_{1,.} x + b^2_1 = 0 \}\]
\[H_2 = \{ x \in \RR^2, M^2_{2,.} x + b^2_2 = 0 \}.\]

To discuss the conditions in \cite{phuong2020functional}, we will refer to their concepts of fold-set, of piece-wise linear surface, of canonical representation and dependency graph of a piece-wise linear surface, as well as to their Lemma 4. We also use their notations $\square_1 S$ and $\square_2 S$. Let us now consider the set $S$ as the fold-set of the function $f_{\Mbf, \bbf}$ implemented by the network. Here, it corresponds to the points $x$ satisfying one of the following equations
\[\begin{cases}
M^2_{1,.} x + b^2_1 = 0 \\
M^2_{2,.} x + b^2_2 = 0 \\
M^1_{1,.} h_2(x) + b^1_1 = 0 \\
M^1_{2,.} h_2(x) + b^1_2 = 0 .
\end{cases}\]
In other words, $S = H_1 \cup H_2  \cup  H_3^- \cup H_3^+  \cup  H_4^- \cup H_4^+ $. The canonical representation of $S$ is the following
\[S = \left( H_1 \cup H_2 \right)  \cup \left( H_3^- \cup H_3^+  \cup  H_4^- \cup H_4^+ \right), \]
where $\square_1 S = \left( H_1 \cup H_2 \right)$ and $\square_2 S = S$. Further, it can be checked that the dependency graph of $S$ only contains the edges:
$H_2 \rightarrow H_3^-$, $H_2 \rightarrow H_3^+$, $H_2 \rightarrow H_4^-$ and $H_2 \rightarrow H_4^+$. 

The identifiable networks considered in \cite{phuong2020functional} must satisfy the conditions of Lemma 4 in \cite{phuong2020functional}. In particular, the dependency graph of $S$ must contain at least $2$ directed paths of length $1$ with distinct starting vertices, which is not the case here since all the paths of length $1$ start from $H_2$. Hence, this network does not fall under the conditions of \cite{phuong2020functional}.

Now if we use the concepts and notations of \cite{pmlr-v119-rolnick20a}, let us denote by $z_1$ the first neuron of the first hidden layer, whose associated parameters are $M^2_{1,.}$ and $b^2_1$. Following the definition in \cite{pmlr-v119-rolnick20a}, the boundary associated to $z_1$ is $B_{z_1} = H_1$. Let us denote by $z_3$ the first neuron of the second hidden layer, whose associated parameters are $M^1_{1,.}$ and $b^1_1$. Its boundary is $B_{z_3} = H_3^- \cup H_3^+$. Since $z_1$ and $z_3$ belong to two consecutive layers and are thus linked by an edge of the network, the conditions in \cite{pmlr-v119-rolnick20a} (see Theorem 2) require that $B_{z_1}$ and $B_{z_3}$ intersect. It is however clear that they do not (see Figure \ref{figure - reciprocal images of polyhedrons}).

\section{Sketch of proof of Theorem \ref{main theorem-main}}\label{sketch-sec}

Our main result, Theorem \ref{main theorem-main}, is proven in details in Appendix \ref{Proof of the main theorem-apdx}, and we give a sketch of the proof in this section. It is proven by induction. We are given two parameterizations $\mathbf{(M,b)}$ and $\mathbf{(\tilde{M}, \tilde{b})}$, two lists $\boldsymbol{\Pi}$ and $\boldsymbol{\tilde{\Pi}}$ that are admissible with respect to $\mathbf{(M,b)}$ and $\mathbf{(\tilde{M}, \tilde{b})}$ respectively, and a domain $\Omega$ that satisfy the hypotheses of Theorem \ref{main theorem-main} and we want to show that the two parameterizations are equivalent. For this, we identify the layers one after the other. To facilitate identification at a layer level, we begin with a normalisation step.

\subsection{Normalisation step} 

Two equivalent parameterizations do not necessarily have equal weights on their layers. Indeed, the neuron permutations but more importantly the rescalings can change the structure of the intermediate layers. We are going to assume the following normalisation property: for all $k \in \llbracket  1, K-1 \rrbracket$, for all $i \in \llbracket 1 , n_k \rrbracket$, we have
\begin{equation}\begin{aligned}
\|M^k_{i,.}\|=1 ;\\
\|\tilde{M}^k_{i,.} \|=1.
\end{aligned}
\end{equation} 

Indeed, we show in the appendix that for a parameterization satisfying the conditions $\mathbf{P}$, there always exists an equivalent parameterization that is normalised and that satisfies the conditions $\mathbf{P}$ (see Propositions \ref{equivalent normalized network} and \ref{conditions P are stable by equivalence}). We can thus replace each parameterization $\mathbf{(M,b)}$ and $\mathbf{(\tilde{M}, \tilde{b})}$ by an equivalent normalised parameterization. If we are able to show the normalised parameterizations are equivalent, then the original parameterizations are equivalent too.

\subsection{Induction}
 
The induction proof relies on Lemma \ref{Fundamental lemma-main} below. Let $K$ be the number of layers of the network, and suppose the theorem is true for the networks with $K-1$ layers. As explained in section \ref{Section conditions-main}, to identify the parameters $M^{K-1}$ and $b^{K-1}$, we separate the function implemented by the first layer of the network and the function implemented by the rest of the layers. For each network:
\[g_{K} = g_{K-1} \circ h_{K-1},\]
\[\tilde{g}_{K} = \tilde{g}_{K-1} \circ \tilde{h}_{K-1}.\]

We know that $g_{K-1}$ and $\tilde{g}_{K-1}$ are continuous piecewise linear, and this will allow us to apply Lemma \ref{Fundamental lemma-main}. Before stating it, we introduce a set of conditions, called $\mathbf{C}$, that need to be satisfied in order to apply it. These conditions come immediately from $\mathbf{P}$, and one can easily check that $(g_{K-1},M^{K-1},b^{K-1},\Omega_{K}, \Pi_{K-1})$ and $(\tilde{g}_{K-1}, \tilde{M}^{K-1}, \tilde{b}^{K-1}, \Omega_{K}, \tilde{\Pi}_{K-1})$ satisfy $\mathbf{C}$, as a direct consequence of the conditions $\mathbf{P}$ being satisfied by $\mathbf{(M,b, \Omega, \boldsymbol{\Pi})}$ and $\mathbf{(\tilde{M}, \tilde{b}, \Omega, \boldsymbol{\tilde{\Pi}})}$.

\begin{df}
Let $l,m,n$ be integers, $M \in \mathbb{R}^{m \times l}$, $b \in \mathbb{R}^m$, $\Omega \subset \mathbb{R}^l$ be an open domain, let $g : \mathbb{R}^m \rightarrow \mathbb{R}^n$ a continuous piecewise linear function, and let $\Pi$ be an admissible set of polyhedra with respect to $g$.

Let $D \in \Pi$.  The function $g$ coincides with a linear function on $D$. Since the interior of $D$ is nonempty, we define $V(D) \in \mathbb{R}^{n \times m}$ and $c(D) \in \mathbb{R}^n$ as the unique couple satisfying, for all $x \in D$:
\[g(x) = V(D)x + c(D).\]

We denote $E_i = \{ x \in \mathbb{R}^m, x_i = 0 \}$.

 We say that $(g,M,b,\Omega, \Pi)$ satisfies the conditions $\mathbf{C}$ iif
\begin{itemize}
\item[$\mathbf{C}.a)$] $M$ is full row rank;
\item[$\mathbf{C}.b)$] for all $i \in \llbracket 1 , m \rrbracket$, there exists $x \in \ring{\Omega}$ such that
\[M_{i,.}x + b_i = 0,\]
or equivalently, if we denote by $h^{lin}$ the function $x \mapsto M x + b$, then \[E_i \cap h^{lin}(\ring{\Omega}) \neq \emptyset;\]
\item[$\mathbf{C}.c)$] for all $D \in \Pi$, for all $i \in \llbracket 1 , m \rrbracket$, if $E_i \cap D \cap h(\Omega) \neq \emptyset$ then $V_{.,i}(D) \neq 0$;
\item[$\mathbf{C}.d)$] for any affine hyperplane $H \subset \mathbb{R}^{l}$,
\[H \cap \ring{\Omega} \ \not\subset \bigcup_{D \in \Pi} \partial h^{-1}(D).\]
\end{itemize}
\end{df}

We can now state the lemma.

\begin{lem}\label{Fundamental lemma-main} Let $l,m,n \in \mathbb{N}^*$. Suppose $g,\tilde{g} : \mathbb{R}^m \rightarrow \mathbb{R}^n$ are continuous piecewise linear functions, $\Omega \subset \mathbb{R}^l$ is a subset and let $M, \tilde{M} \in \mathbb{R}^{m \times l}$, $b, \tilde{b} \in \mathbb{R}^{m}$. Denote $h: x \mapsto \sigma(Mx + b)$ and $\tilde{h}: x \mapsto \sigma(\tilde{M}x + \tilde{b})$. Assume $\Pi$ and $\tilde{\Pi}$ are two sets of polyhedra admissible with respect to $g$ and $\tilde{g}$.

Suppose  $(g,M,b,\Omega, \Pi)$ and $(\tilde{g}, \tilde{M}, \tilde{b}, \Omega, \tilde{\Pi})$ satisfy the conditions $\mathbf{C}$, and for all $i \in \llbracket 1 , m \rrbracket$, $\|M_{i,.} \| = \| \tilde{M}_{i,.} \|= 1$.

Suppose for all $x \in \Omega$:
\[g \circ h (x) = \tilde{g} \circ \tilde{h}(x).\]

Then, there exists a permutation $\varphi \in \mathfrak{S}_m$, such that:
\begin{itemize}
\item $\tilde{M} = P_{\varphi}M $;
\item $\tilde{b} = P_{\varphi}b$;
\item $g$ and $y \mapsto \tilde{g} (P_{\varphi}y)$ coincide on ${h}(\Omega)$.
\end{itemize}
\end{lem}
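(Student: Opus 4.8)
The plan is to reconstruct the weights and bias of the first layer (the matrix $M$ and vector $b$ in the lemma) from the non-differentiability structure of the continuous piecewise linear function $g\circ h$ on $\Omega$, and then to match them against the corresponding objects $\tilde M,\tilde b$ coming from $\tilde g\circ\tilde h$. Concretely, I would first observe that $g\circ h$ is continuous piecewise linear, and its set of non-differentiability points, restricted to $\ring\Omega$, is contained in the union of the hyperplanes $H_i=\{x:M_{i,.}x+b_i=0\}$ (for $i$ such that $M_{i,.}\ne 0$, which holds by the normalization $\|M_{i,.}\|=1$) together with the images under $h^{-1}$ of the boundaries of the polyhedra of $\Pi$, i.e. $\bigcup_{D\in\Pi}\partial h^{-1}(D)$. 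The key point is that condition $\mathbf{C}.d)$ prevents a full affine hyperplane of $\RR^l$ from being swallowed entirely by the second type of set, while $\mathbf{C}.b)$ guarantees that $H_i$ genuinely meets $\ring\Omega$, and $\mathbf{C}.c)$ (together with $\mathbf{C}.a)$ ensuring the relevant directions are not degenerate) guarantees that the non-differentiability of $\sigma(M_{i,.}x+b_i)$ actually survives in $g\circ h$, because the column $V_{.,i}(D)$ is nonzero on every cell $D$ that $E_i$ meets inside $h(\Omega)$. Putting these together, I would argue that the ``large'' pieces of the non-differentiability set of $g\circ h$ in $\ring\Omega$ — those that locally look like a full $(l-1)$-dimensional affine hyperplane — are exactly the hyperplanes $H_i$, $i\in\llbracket1,m\rrbracket$.

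Having intrinsically recovered the unordered family $\{H_i\}_{i=1}^m$ from $g\circ h|_\Omega$, and likewise the family $\{\tilde H_i\}_{i=1}^m$ from $\tilde g\circ\tilde h|_\Omega$, the hypothesis $g\circ h=\tilde g\circ\tilde h$ on $\Omega$ forces these two families of hyperplanes to coincide as sets. I would then fix a bijection: for each $i$ there is a unique $\varphi(i)$ with $\tilde H_{\varphi(i)}=H_i$; uniqueness requires knowing the $H_i$ are pairwise distinct, which again follows from the conditions (if $H_i=H_j$ the corresponding neurons would be ``twins'' and one checks this contradicts $\mathbf{C}.c)$ or the rank condition $\mathbf{C}.a)$ — I'd need to be a little careful here, possibly arguing at the level of the sign pattern of $g\circ h$ across each hyperplane). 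Since $\|M_{i,.}\|=\|\tilde M_{i,.}\|=1$, the equation of a hyperplane determines $(M_{i,.},b_i)$ up to a global sign; I would then pin down the sign using the orientation of the ReLU kink — the ``jump in gradient'' of $g\circ h$ when crossing $H_i$ occurs on the side where $M_{i,.}x+b_i>0$, and comparing with the same data for $\tilde g\circ\tilde h$ fixes the sign consistently. This yields $\tilde M_{\varphi(i),.}=M_{i,.}$ and $\tilde b_{\varphi(i)}=b_i$ for all $i$, i.e. $\tilde M=P_\varphi M$ and $\tilde b=P_\varphi b$.

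For the last bullet, once $\tilde M=P_\varphi M$ and $\tilde b=P_\varphi b$, we get $\tilde h(x)=\sigma(\tilde M x+\tilde b)=\sigma(P_\varphi(Mx+b))=P_\varphi\,\sigma(Mx+b)=P_\varphi h(x)$ for all $x$, using that $\sigma$ acts coordinatewise and permutations commute with coordinatewise maps. Then for every $x\in\Omega$, $\tilde g(\tilde h(x))=g(h(x))$ becomes $\tilde g(P_\varphi h(x))=g(h(x))$, i.e. $g$ and $y\mapsto\tilde g(P_\varphi y)$ agree on $h(\Omega)$, which is exactly the third conclusion.

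The main obstacle I anticipate is the geometric/topological step: rigorously showing that the hyperplanes $H_i$ can be \emph{detected} from $g\circ h|_\Omega$, i.e. that at a ``generic'' point of $H_i\cap\ring\Omega$ the function $g\circ h$ is genuinely non-affine, and conversely that no spurious full hyperplane of non-differentiability appears. This requires carefully combining $\mathbf{C}.c)$ (nonvanishing of $V_{.,i}(D)$ on the cells meeting $E_i$ in $h(\Omega)$) with $\mathbf{C}.d)$ to rule out exact cancellation of the kink along an entire hyperplane, and handling the bookkeeping of how $h^{-1}$ subdivides each hyperplane $H_i$ into polyhedral pieces on which $g$ may be represented by different affine maps $V(D)x+c(D)$. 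The subtlety is that the gradient jump of $g\circ h$ across $H_i$ at a point lying in the interior of some $h(\Omega)$-cell $D$ is (a positive multiple of) $V_{.,i}(D)\,M_{i,.}$, so $\mathbf{C}.c)$ gives a nonzero jump locally, but one must ensure such good points fill a dense subset of $H_i\cap\ring\Omega$ and are not all hidden inside $\bigcup_{D}\partial h^{-1}(D)$ — which is precisely what $\mathbf{C}.d)$ provides. Everything else (normalization, sign disambiguation, the final conjugation identity) is comparatively routine.
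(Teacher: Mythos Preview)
Your proposal is correct and matches the paper's approach: characterize $\{H_i\}$ intrinsically as the full affine hyperplanes whose intersection with $\ring\Omega$ lies in the closure of the non-differentiability set of $g\circ h$ (using $\mathbf{C}.b)$--$\mathbf{C}.d)$ exactly as you describe), match them to $\{\tilde H_i\}$ via a permutation and the unit-norm normalization, resolve the sign, and deduce the last bullet from $\tilde h=P_\varphi h$. The only step where the paper is sharper than your sketch is the sign resolution: rather than an ``orientation of the kink'' argument, it equates the two affine expressions of $f$ on the positive half-ball $B_+$, matches the coefficients of the $M_{j,.}$ using their linear independence ($\mathbf{C}.a)$), and uses $\mathbf{C}.c)$ on both sides to obtain $\delta_i=1\Leftrightarrow\tilde\delta_{\varphi(i)}=1$, forcing $\epsilon_i=1$.
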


Lemma \ref{Fundamental lemma-main} is restated in Appendix \ref{Appendix-B} as Lemma \ref{Fundamental lemma} and proven in Appendix \ref{proof of the lemma}.

Applying this lemma to $(g_{K-1},M^{K-1},b^{K-1},\Omega_{K}, \Pi_{K-1})$ and $(\tilde{g}_{K-1}, \tilde{M}^{K-1}, \tilde{b}^{K-1}, \Omega_{K}, \tilde{\Pi}_{K-1})$, we conclude that there exists a permutation $\varphi_{K-1}$ such that
\begin{equation}\begin{cases}\tilde{M}^{K-1}= P_{\varphi_{K-1}}M^{K-1}\\
\tilde{b}^{K-1} = P_{\varphi_{K-1}} b^{K-1},\end{cases} \end{equation}
and that $g_{K-1}$ and $y \mapsto \tilde{g}(P_{\varphi_{K-1}}y)$ coincide on $h_{K-1}(\Omega)$.

The functions $g_{K-1}$ and $y \mapsto \tilde{g}(P_{\varphi_{K-1}}y)$ are the functions implemented by the networks $\mathbf{(M,b)}$ and $\mathbf{(\tilde{M}, \tilde{b})}$ once we have removed the first layer, with a permutation of the input for the second one. Since they coincide on $\Omega_{K-1} = h_{K-1}(\Omega)$ and they satisfy the conditions $\mathbf{P}$, we can apply the induction hypothesis to conclude the proof of Theorem \ref{main theorem-main}. The complete proof is detailed in the appendices, as discussed above.

\section{Conclusion}

We established a set of conditions $\mathbf{P}$ under which the function implemented by a deep feedforward ReLU neural network on a subset $\Omega$ of the input space uniquely characterizes its parameters, up to permutation and positive rescaling. This contributes to the understanding of identifiability and stable recovery for deep ReLU networks, which is still largely unexplored. The conditions under which our result holds differ from the conditions of the results established in \cite{phuong2020functional} and \cite{pmlr-v119-rolnick20a}, which allows us to cover new situations.
To be satisfied the conditions $\mathbf{P}$ need $\Omega$ to have nonempty interior, which prevents it from being a sample set.  The authors of \cite{stock2022embedding, bonalocal} are able to give a result with a finite set $\Omega$, but for local identifiability only. Obtaining the best of both worlds, that is establishing a global identifiability result for deep ReLU networks with a finite set $\Omega$, would be a major step forward.

\section*{Acknowledgements}

Our work has benefited from the AI Interdisciplinary Institute ANITI. ANITI is funded by the French ``Investing for
the Future – PIA3” program under the Grant agreement n°ANR-19-PI3A-0004.

\bigskip

\begin{appendix}

\newpage

In the appendices, we restate all the notations, definitions and results of the main text, for clarity of reading. The appendices are then organized as follows. In Appendix \ref{Appendix-A}, we give the complete definitions and basic properties necessary to state and prove the main theorem. 
In Appendix \ref{Appendix-B}, we state the main result, Theorem \ref{main theorem} (Theorem \ref{main theorem-main} in the main text), and we prove it. 
Finally, we prove the fundamental lemma used in the proof of the main theorem, Lemma \ref{Fundamental lemma} (Lemma \ref{Fundamental lemma-main} in the main text), in Appendix \ref{proof of the lemma}.

\section{Definitions, notations and preliminary results}\label{Appendix-A}

Appendix \ref{Appendix-A} is structured as follows: after giving some notations in Section \ref{Basic notations-sec}, we recall the definition of a continuous piecewise linear function and some corresponding basic properties in Section \ref{CPL dunctions-sec} and we give our formalization of deep ReLU networks as well as some well-known properties in Section \ref{Neural networks-sec}.

\subsection{Basic notations and definitions}\label{Basic notations-sec}

We denote by 
\[\fonction{\sigma}{\mathbb{R}}{\mathbb{R}}{t}{\max(t,0)}\]
 the ReLU activation function. If $x = (x_1, \dots , x_m)^T \in \mathbb{R}^m$ is a vector, we denote $\sigma(x) = (\sigma(x_1), \dots , \sigma(x_m))^T$.

If $A \subset \mathbb{R}^m$, we denote by $\ring{A}$ the interior of $A$ and $\overline{A}$ the closure of $A$ with respect to the standard topology of $\mathbb{R}^m$. We denote by $\partial  A = \overline{A} \backslash \ring{A}$ the topological boundary of $A$.

For $m,n \in \mathbb{N}^*$, we denote by $\mathbb{R}^n$ the vector space of $n$-dimensional real vectors and $\mathbb{R}^{m \times n}$ the vector space of real matrices with $m$ lines and $n$ columns. On the space of vectors, we use the norm $\| x \| = \sqrt{\sum_{i=1}^n x_i^2}$. For $x \in \mathbb{R}^n$ and $r>0$, we denote $B(x,r) = \{ y \in \mathbb{R}^n , \| y - x \| < r \}$.

For any vector $x \in \mathbb{R}^n$ whose coefficients $x_i$ are all different from zero, we denote by $x^{-1}$ or $\frac{1}{x}$ the vector $\left(\frac{1}{x_1}, \frac{1}{x_2}, \dots, \frac{1}{x_n}\right)^T$.

For any matrix $M \in \mathbb{R}^{m\times n}$, for all $i \in \llbracket 1 , m \rrbracket$, we denote by $M_{i, .}$ the $i^{\text{th}}$ line of $M$. The vector $M_{i,.}$ is a line vector whose $j^{\text{th}}$ component is $M_{i,j}$. Similarly, for $j \in \llbracket 1 , n \rrbracket$, we denote by $M_{.,j}$ the $j^{\text{th}}$ column of $M$, which is the column vector whose $i^{\text{th}}$ component is $M_{i,j}$. For any matrix $M \in \mathbb{R}^{m \times n}$, we denote by $M^T \in \mathbb{R}^{n\times m}$ the transpose matrix of $M$. 

To avoid any confusion, we will denote by $(M^T)_{i,.}$ the $i^{\text{th}}$ line of the matrix $M^T$ and by $M_{i,.}^{ \hspace{5pt} T}$ the transpose of the line vector $M_{i,.}$, which is a column vector. Similarly, we will denote by $(M^T)_{.,j}$ the $j^{\text{th}}$ column of $M^T$ and $M_{.,j}^{ \hspace{5pt} T}$ the transpose of the column vector $M_{.,j}$.

For $n \in \mathbb{N}^*$, we denote by $\Id_n$ the $n \times n$ identity matrix and by $\One_n$ the vector \mbox{$(1,1, \dots, 1)^T \in \mathbb{R}^n$}.

If $\lambda \in \mathbb{R}^n$ is a vector of size $n$, for some $n \in \mathbb{N}^*$, we denote by $\Diag(\lambda)$ the $n \times n$ matrix defined by:
\[\Diag(\lambda)_{i,j} = \begin{cases} \lambda_i & \text{if } i=j \\ 0 & \text{otherwise.}\end{cases}\] 

For any integer $m \in \mathbb{N}^*$, we denote by $\mathfrak{S}_m $ the set of all permutations of $\llbracket 1 , m \rrbracket$. We denote by $id_{\llbracket 1, m \rrbracket}$ and $id_{\mathbb{R}^m}$ the identity functions on $\llbracket 1 , m \rrbracket$ and $\mathbb{R}^m$ respectively.

For any permutation $\varphi \in \mathfrak{S}_m$, we denote by $P_\varphi$ the $m \times m$ permutation matrix associated to $\varphi$:
\begin{equation}\label{permutation matrix-0}\forall i, j \in \llbracket 1 , m \rrbracket, \quad (P_\varphi)_{i,j} = \begin{cases}  1 & \text{if } \varphi(j) = i\\ 0 & \text{otherwise.} \end{cases}\end {equation}
For all $x \in \mathbb{R}^m$, we have:
\begin{equation}\label{permutation matrix}
(P_{\varphi}x)_i = x_{\varphi^{-1}(i)}.
\end{equation}
Using \eqref{permutation matrix} we see that $P_{\varphi^{-1}} P_{\varphi}x = x$, which shows, since $P_{\varphi}$ is orthogonal, that we have
\begin{equation}\label{Inverse of a permutation matrix}
P_{\varphi}^{-1} = P_{\varphi^{-1}} = P_{\varphi}^T.
\end{equation}

Let $l,m,n \in \mathbb{N}^*$. For any matrix $M \in \mathbb{R}^{m \times l}$ and any function $f : \mathbb{R}^m \rightarrow \mathbb{R}^n$, we denote with a slight abuse of notation $f \circ M$ the function $x \mapsto f(Mx)$.

If $X$ and $Y$ are two sets and $h : X \rightarrow Y$ is a function, for a subset $A \subset Y$, we denote by $h^{-1}(A)$ the following set:
\[\{x \in X, h(x) \in A \}.\]
Note that this does not require the function $h$ to be injective.

\subsection{Continuous piecewise linear functions}\label{CPL dunctions-sec}

We now introduce a few definitions and properties around the notion of continuous piecewise linear function.

\begin{df}
Let $m \in \mathbb{N^*}$. A subset $D \subset \mathbb{R}^m$ is a closed polyhedron iif there exist $q \in \mathbb{N}^*$, $a_1, \dots, a_q \in \mathbb{R}^m$ and $b_1, \dots b_q \in \mathbb{R}$ such that for all $x \in \mathbb{R}^m$, 
\[x \in D \quad \Longleftrightarrow \quad \begin{cases}
a_1^T x + b_1 \leq 0 \\
\vdots \\
a_q^T x + b_q \leq 0.
\end{cases}\]
\end{df}

\begin{rems}
\begin{itemize}
\item A closed polyhedron is convex as an intersection of convex sets.
\item Since we can fuse the inequation systems of several closed polyhedrons into one system, we see that an intersection of closed polyhedrons is a closed polyhedron.
\item For $q=1$ and $a_1 = 0$, taking $b_1 > 0$ and $b_1 \leq 0$ respectively we can show that $\emptyset$ and $\mathbb{R}^m$ are both closed polyhedra.
\end{itemize}
\end{rems}

\begin{prop}\label{reciprocal image of a polyhedron}
Let $m,l \in \mathbb{N}^*$. If $h : \mathbb{R}^l \rightarrow \mathbb{R}^m$ is linear and $C$ is a closed polyhedron of $\mathbb{R}^m$, then $h^{-1}(C)$ is a closed polyhedron of $\mathbb{R}^l$.
\end{prop}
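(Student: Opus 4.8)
The plan is simply to unwind the definition of a closed polyhedron and push it through the affine substitution. Recall that in this paper ``linear'' means affine, so write $h(x) = Ax + \beta$ for some $A \in \mathbb{R}^{m \times l}$ and $\beta \in \mathbb{R}^m$. Since $C$ is a closed polyhedron of $\mathbb{R}^m$, fix $q \in \mathbb{N}$, vectors $a_1, \dots, a_q \in \mathbb{R}^m$ and scalars $b_1, \dots, b_q \in \mathbb{R}$ such that, for all $y \in \mathbb{R}^m$, one has $y \in C$ if and only if $a_i^T y + b_i \leq 0$ for every $i \in \llbracket 1, q \rrbracket$.

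First I would observe that $x \in h^{-1}(C)$ if and only if $h(x) \in C$, i.e. if and only if $a_i^T(Ax + \beta) + b_i \leq 0$ for all $i \in \llbracket 1, q \rrbracket$. For each $i$, set $a_i' := A^T a_i \in \mathbb{R}^l$ and $b_i' := a_i^T \beta + b_i \in \mathbb{R}$. Then $a_i^T(Ax + \beta) + b_i = (a_i')^T x + b_i'$, so that
\[
h^{-1}(C) = \bigl\{\, x \in \mathbb{R}^l \ :\ (a_i')^T x + b_i' \leq 0, \ i \in \llbracket 1, q \rrbracket \,\bigr\},
\]
which is precisely the defining form of a closed polyhedron of $\mathbb{R}^l$ (with the same number $q$ of constraints). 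This proves the claim.

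The only point requiring the slightest care is the degenerate case $q = 0$ (equivalently $C = \mathbb{R}^m$), for which $h^{-1}(C) = \mathbb{R}^l$: this is still a closed polyhedron, either directly by the convention that an empty system of inequalities defines the whole space, or, if one keeps the convention $q \geq 1$, by using the trivial inequality $0^T x + (-1) \leq 0$. I do not expect any genuine obstacle here; the statement is an immediate consequence of the definition together with the fact that composing an affine inequality with an affine map yields an affine inequality.
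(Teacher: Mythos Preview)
Your proof is correct and follows essentially the same approach as the paper's own proof: both write $h(x)=Ax+\beta$, substitute into the defining inequalities $a_i^T y + b_i \le 0$, and read off the new affine constraints $(A^T a_i)^T x + (a_i^T\beta + b_i) \le 0$. The only difference is cosmetic notation and your explicit remark on the $q=0$ case, which the paper handles by convention.
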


\begin{proof}
The function $h$ is linear so there exist $M \in \mathbb{R}^{m \times l}$ and $b \in \mathbb{R}^m$ such that for all $x \in \mathbb{R}^l$, 
\[h(x) = Mx + b.\]
The set $C$ is a closed polyhedron so there exist $a_1, \dots, a_q \in \mathbb{R}^m$ and $b_1, \dots b_q \in \mathbb{R}$ such that $y \in C$ if and only if
\[\begin{cases}
a_1^T y + b_1 \leq 0 \\
\vdots \\
a_q^T y + b_q \leq 0.
\end{cases}\]

For all $x \in \mathbb{R}^l$, 
\begin{equation*}
\begin{aligned}
x \in h^{-1}(C) \quad & \Longleftrightarrow \quad h(x) \in C \\
& \Longleftrightarrow \quad \begin{cases}
a_1^T (Mx + b) + b_1 \leq 0 \\
\vdots \\
a_q^T (Mx + b) + b_q \leq 0
\end{cases} \\
& \Longleftrightarrow \quad \begin{cases}
(a_1^T M)x + (a_1^Tb + b_1) \leq 0 \\
\vdots \\
(a_q^T M)x + (a_q^Tb + b_q) \leq 0.
\end{cases}
\end{aligned}
\end{equation*}

This shows that $h^{-1}(C)$ is a closed polyhedron.
\end{proof}

\begin{df}
We say that a function $g: \mathbb{R}^m \rightarrow \mathbb{R}^n$ is continuous piecewise linear if there exists a finite set of closed polyhedra whose union is $\mathbb{R}^m$ and such that $g$ is linear over each polyhedron.
\end{df}

\begin{ex}
Since $\mathbb{R}^m$ is a closed polyhedron, we see in particular that an affine function $x \mapsto Ax + b$, with $A \in \mathbb{R}^{n \times m}$ and $b \in \mathbb{R}^n$, is continuous piecewise linear from $\mathbb{R}^m$ to $\mathbb{R}^n$.
\end{ex}

\begin{exnum}
The vectorial ReLU function $\sigma : \mathbb{R}^m \rightarrow \mathbb{R}^m$ is continuous piecewise linear. Indeed, each of the $2^m$ closed orthants is a closed polyhedron, defined by a system of the form 
\[\begin{cases}\epsilon_1 x_1 \geq 0 \\
 \vdots \\
  \epsilon_m x_m \geq 0, \end{cases}\]
  with $\epsilon_i \in \{-1, 1 \}$, and over such an orthant, the ReLU coincides with the affine function
\[(x_1, \dots, x_m ) \mapsto \left(\frac{1+\epsilon_1}{2} x_1, \dots,\frac{1+\epsilon_m}{2} x_m\right) .\]

\end{exnum}

In this definition the continuity is not obvious. We show it in the following proposition.

\begin{prop}\label{continuity of CPL functions}
A continuous piecewise linear function is continuous.
\end{prop}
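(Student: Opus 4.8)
The plan is to deduce continuity of $g$ from the continuity of affine maps, exploiting the \emph{finite} closed cover built into the definition. I would start by naming the data: closed polyhedra $D_1,\dots,D_N$ with $\bigcup_{i=1}^N D_i = \RR^m$, and for each $i$ an affine map $\ell_i : \RR^m \to \RR^n$, defined on all of $\RR^m$ and hence continuous, such that $g$ and $\ell_i$ agree on $D_i$. I would also record that each $D_i$ is topologically closed, being a finite intersection of sets of the form $\{x : a^T x + b \leq 0\}$, each of which is closed.

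The heart of the argument is a local statement at an arbitrary $x_0 \in \RR^m$. First I would isolate the indices $I = \{\, i : x_0 \in D_i \,\}$, which is nonempty because the $D_i$ cover $\RR^m$; note that $\ell_i(x_0) = g(x_0)$ for every $i \in I$. Next, using that for $i \notin I$ the closed set $D_i$ misses $x_0$, I would produce a radius $r > 0$ with $B(x_0,r) \cap D_i = \emptyset$ for all $i \notin I$, so that $B(x_0,r) \subset \bigcup_{i \in I} D_i$; finiteness of the cover is exactly what makes this possible. Finally, given $\varepsilon > 0$, the continuity of each of the finitely many maps $\ell_i$, $i \in I$, yields a common $\delta \in (0,r]$ such that $\|\ell_i(x) - \ell_i(x_0)\| < \varepsilon$ on $B(x_0,\delta)$ for all $i \in I$; then for $x \in B(x_0,\delta)$ one has $x \in D_i$ for some $i \in I$, whence $\|g(x) - g(x_0)\| = \|\ell_i(x) - \ell_i(x_0)\| < \varepsilon$. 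This gives continuity at $x_0$, and since $x_0$ is arbitrary, $g$ is continuous.

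As an alternative I could invoke the gluing lemma for finite closed covers directly: for closed $C \subset \RR^n$, the set $g^{-1}(C) = \bigcup_{i=1}^N \bigl(D_i \cap \ell_i^{-1}(C)\bigr)$ is a finite union of closed sets, hence closed. I do not expect a genuine obstacle here; the only subtlety worth flagging is that the several affine pieces meeting at $x_0$ are automatically consistent there, since they all take the value $g(x_0)$, so no discontinuity can hide at the seams — and this, together with the finiteness of the cover, is precisely what the argument leans on.
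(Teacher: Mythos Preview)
Your proof is correct and follows essentially the same approach as the paper: localize at an arbitrary point $x_0$, discard the finitely many closed polyhedra not containing $x_0$ to obtain a small ball covered by the remaining ones, and then use the continuity of the (finitely many) affine pieces meeting at $x_0$ to get a common $\delta$. The paper's argument is identical in structure; your explicit naming of the global affine extensions $\ell_i$ and the alternative via the gluing lemma are minor cosmetic additions, not a different route.
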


\begin{proof}
Let $g: \mathbb{R}^m \rightarrow \mathbb{R}^n$ be a continuous piecewise linear function. 
There exists a finite family of closed polyhedra $C_1, \dots, C_r$ such that $\bigcup_{i=1}^r C_i = \mathbb{R}^m$ and $g$ is linear on each closed polyhedron $C_i$.

Let $x \in \mathbb{R}^m$.  Let $\epsilon > 0$.

Let us denote $I = \{ i \in \llbracket 1 , n \rrbracket, \ x \in C_i \} $. Since the polyhedrons are closed, there exists $r_0 > 0 $ such that for all $i \notin I, B(x,r_0) \cap C_i = \emptyset $. We thus have 
\[B(x,r_0) = \bigcup_{i=1}^m (B(x,r_0) \cap C_i) = \bigcup_{i \in I} \left( B(x,r_0) \cap C_i \right).\]

For all $i \in I$, $g$ is linear -therefore continuous- on $C_i$ so there exists $r_i > 0$, such that 
\[y \in C_i \cap B(x,r_i) \ \Rightarrow \ \| g(y) - g(x) \| \leq \epsilon.\]

Let $r = \min(r_0, \min_{i \in I} (r_i) )$. For all $y \in B(x,r)$ there exists $i \in I$ such that $y \in C_i$, and since $r \leq r_i$, we have
\[\| g(y) - g(x) \| \leq \epsilon.\]

Summarizing, for any $x \in \mathbb{R}^n$ and for any $\epsilon > 0$, there exists $r> 0$ such that 
\[y \in B(x,r) \ \Rightarrow \ \| g(y) - g(x) \| \leq \epsilon.\] This shows $g$ is continuous.
\end{proof}

\begin{prop}\label{composition CPL functions}
If $h : \mathbb{R}^l \rightarrow \mathbb{R}^m$ and $g : \mathbb{R}^m \rightarrow \mathbb{R}^n$ are two continuous piecewise linear functions, then $g \circ h$ is continuous piecewise linear.
\end{prop}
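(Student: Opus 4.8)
The plan is to construct, by a common refinement of the two given polyhedral decompositions, an explicit finite family of closed polyhedra on which $g \circ h$ is affine. Concretely, since $h$ is continuous piecewise linear, fix closed polyhedra $C_1, \dots, C_r$ with $\bigcup_{i=1}^r C_i = \mathbb{R}^l$ and $h$ affine on each $C_i$; since $g$ is continuous piecewise linear, fix closed polyhedra $D_1, \dots, D_s$ with $\bigcup_{j=1}^s D_j = \mathbb{R}^m$ and $g$ affine on each $D_j$. For every $i$, the definition of continuous piecewise linearity gives $A_i \in \mathbb{R}^{m \times l}$ and $c_i \in \mathbb{R}^m$ such that $h(x) = A_i x + c_i$ for all $x \in C_i$; I would introduce the \emph{globally defined} affine map $h_i : \mathbb{R}^l \to \mathbb{R}^m$, $h_i(x) = A_i x + c_i$, which agrees with $h$ on $C_i$. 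The candidate family is $\{\, C_i \cap h_i^{-1}(D_j) : 1 \le i \le r,\ 1 \le j \le s \,\}$, a finite collection of at most $rs$ sets.

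Next I would verify the three defining properties. First, each $C_i \cap h_i^{-1}(D_j)$ is a closed polyhedron: $h_i^{-1}(D_j)$ is a closed polyhedron by Proposition \ref{reciprocal image of a polyhedron} (applied to the affine map $h_i$), and the intersection of two closed polyhedra is again a closed polyhedron, since one simply concatenates the defining systems of inequalities. Second, the family covers $\mathbb{R}^l$: given $x \in \mathbb{R}^l$, choose $i$ with $x \in C_i$; then $h(x) = h_i(x)$, and choosing $j$ with $h(x) \in D_j$ yields $x \in h_i^{-1}(D_j)$, hence $x \in C_i \cap h_i^{-1}(D_j)$. Third, $g \circ h$ is affine on $C_i \cap h_i^{-1}(D_j)$: on this set $h$ coincides with the affine map $h_i$, and $h_i(x) \in D_j$ so $g$ coincides on the relevant points with an affine map; the composition of two affine maps is affine, so $g \circ h$ restricted to $C_i \cap h_i^{-1}(D_j)$ is affine. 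Together these three points exhibit $g \circ h$ as continuous piecewise linear.

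I do not expect a genuine obstacle here; the only point deserving care is the passage from ``the restriction of $h$ to $C_i$ is affine'' to a globally defined affine map $h_i$ to which Proposition \ref{reciprocal image of a polyhedron} applies. This is immediate from the definition, but worth stating explicitly, together with the remark that $h_i$ need not be unique when $C_i$ has empty interior — any valid choice works, since all that is used is that $h_i$ agrees with $h$ on $C_i$. Finally, continuity of $g\circ h$ need not be argued separately: it follows either from Proposition \ref{continuity of CPL functions} applied to the piecewise linear structure just constructed, or directly as the composition of two continuous maps (Proposition \ref{continuity of CPL functions} applied to $g$ and $h$).
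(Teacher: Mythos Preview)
Your proof is correct and follows essentially the same approach as the paper: both fix polyhedral decompositions $(C_i)$ and $(D_j)$ for $h$ and $g$, extend $h|_{C_i}$ to a global affine map, and take the family $\{C_i \cap h_i^{-1}(D_j)\}$ (equivalently $\{C_i \cap h^{-1}(D_j)\}$) as the required decomposition for $g\circ h$. Your additional remarks on the possible non-uniqueness of $h_i$ and on continuity are sound but not needed for the argument.
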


\begin{proof}
By definition there exist a family $C_1, \dots, C_r$ of closed polyhedra of $\mathbb{R}^l$ such that $\bigcup_{i=1}^r C_i = \mathbb{R}^l$ and $h$ is linear on each $C_i$ and a family $D_1, \dots, D_s$ of closed polyhedra of $\mathbb{R}^m$ such that $\bigcup_{i=1}^s D_i = \mathbb{R}^m$ and $g$ is linear on each $D_i$. Let $i \in \llbracket 1, r \rrbracket$ and $j \in \llbracket 1, s \rrbracket$. The function $h$ coincides with a linear map $\tilde{h}: \mathbb{R}^l \rightarrow \mathbb{R}^m $ on $C_i$ and the inverse image of a closed polyhedron by a linear map is a closed polyhedron (Proposition \ref{reciprocal image of a polyhedron}) so $\tilde{h}^{-1}(D_j)$ is a closed polyhedron. Thus $h^{-1}(D_j)\cap C_i = \tilde{h}^{-1}(D_j)\cap C_i$ is a closed polyhedron as an intersection of closed polyhedra. The function $h $ is linear on $C_i$ and $g$ is linear on $D_j$ so $g \circ h$ is linear on $h^{-1}(D_j) \cap C_i$. We have a family of closed polyhedra,
\[\left( h^{-1}(D_j) \cap C_i\right)_{\substack{i \in \llbracket 1 , r \rrbracket \\ j \in \llbracket 1, s \rrbracket}},\] each of which $g \circ h$ is linear over. Given that
\[\bigcup_{i = 1}^r \bigcup_{j=1}^s h^{-1}(D_j) \cap C_i = \bigcup_{i = 1}^r  C_i = \mathbb{R}^l,\]
we can conclude that $g \circ h$ is continuous piecewise linear.
\end{proof}

\begin{df}\label{def admissible sets of polyhedra}
Let $g: \mathbb{R}^m \rightarrow \mathbb{R}^n$ be a continuous piecewise linear function. Let $\Pi$ be a set of closed polyhedra of $\mathbb{R}^m$. We say that $\Pi$ is admissible with respect to the function $g$ if and only if:
\begin{itemize}
\item $\bigcup_{D \in \Pi} D = \mathbb{R}^m$,
\item for all $D \in \Pi$, $g$ is linear on $D$,
\item for all $D \in \Pi$, $\ring{D} \neq \emptyset$.
\end{itemize}
\end{df}

\begin{prop}\label{proposition CPL functions}
For all $g: \mathbb{R}^m \rightarrow \mathbb{R}^n$ continuous piecewise linear, there exists a set of closed polyhedra $\Pi$ admissible with respect to $g$.
\end{prop}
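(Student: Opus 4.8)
The plan is to start from the very definition of continuous piecewise linearity and simply discard the lower-dimensional pieces. First I would invoke the definition: there is a finite family of closed polyhedra $C_1, \dots, C_r$ with $\bigcup_{i=1}^r C_i = \mathbb{R}^m$ and $g$ affine on each $C_i$. I would then set $\Pi = \{\, C_i : \ring{C_i} \neq \emptyset \,\}$. By construction every element of $\Pi$ is a closed polyhedron with nonempty interior on which $g$ is affine, so the second and third requirements of Definition \ref{def admissible sets of polyhedra} hold automatically; the only thing left to check is that the elements of $\Pi$ still cover $\mathbb{R}^m$.

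For the covering property I would argue as follows. A closed polyhedron $C$ with $\ring{C} = \emptyset$ is a closed set with empty interior, hence nowhere dense, and a finite union of closed nowhere-dense sets is again a closed set with empty interior: if an open set $U$ were contained in $F_1 \cup \dots \cup F_p$ with each $F_j$ closed of empty interior, then $U \setminus (F_1 \cup \dots \cup F_{p-1})$ would be an open subset of $F_p$, hence empty, and inductively one gets $U \subseteq F_1$, contradicting $\ring{F_1} = \emptyset$ unless $U = \emptyset$. Let $A$ be the union of those $C_i$ having empty interior; then $A$ is closed with empty interior. Since $\bigcup_{i=1}^r C_i = \mathbb{R}^m$, we have $\mathbb{R}^m \setminus \bigcup_{D \in \Pi} D \subseteq A$. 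But $\bigcup_{D \in \Pi} D$ is a finite union of closed sets, hence closed, so its complement is open; being an open set contained in $A$, which has empty interior, it must be empty. Therefore $\bigcup_{D \in \Pi} D = \mathbb{R}^m$, and $\Pi$ is admissible with respect to $g$.

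I do not expect any genuine obstacle here: the content is essentially the remark that only the full-dimensional cells of a polyhedral subdivision matter, and the one small point — that a finite union of closed sets with empty interior has empty interior — is elementary (one could alternatively just invoke the Baire category theorem). The only thing to keep an eye on is the degenerate case $m = 0$, where $\mathbb{R}^0$ is a single point that is itself a closed polyhedron of nonempty interior, so the argument and the statement remain trivially valid.
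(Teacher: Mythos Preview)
Your proof is correct and follows the same overall strategy as the paper: take the polyhedra from the definition, discard those with empty interior, and verify that the remaining ones still cover $\mathbb{R}^m$. The only difference is in that last verification. The paper first observes that a closed polyhedron with empty interior must lie in an affine hyperplane (otherwise it would contain an $m$-simplex), and then argues pointwise via shrinking balls and pigeonhole that every $x$ lies in the closure of some full-dimensional $D_i$. You instead use the purely topological fact that a finite union of closed sets with empty interior has empty interior, and conclude because the complement of $\bigcup_{D\in\Pi}D$ is open and contained in that union. Your argument is slightly cleaner and does not rely on the polyhedral structure at all, whereas the paper's version makes the geometry more explicit; both are entirely adequate here.
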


\begin{proof}
Let $g : \mathbb{R}^m \rightarrow \mathbb{R}^n$ be a continuous piecewise linear function. By definition there exists a finite set of closed polyhedra $D_1, \dots, D_s$ such that $\bigcup_{i=1}^s D_i = \mathbb{R}^m$ and $g$ is linear on each $D_i$. 

Let $I = \{ i \in \llbracket 1, s \rrbracket, \ring{D_i} \neq \emptyset\}$. Let us show that $\bigcup_{i \in I} D_i = \mathbb{R}^m$.

We first show that if a polyhedron $D_i$ has empty interior, then it is contained in an affine hyperplane.  Indeed, if it is not contained in an affine hyperplane, then there exist $m+1$ affinely independent points $x_1, \dots, x_{m+1} \in D_i$. Since a closed polyhedron is convex, the convex hull of the points $\Conv(x_1, \dots, x_{m+1})$, which is a $m$-simplex, is contained in $D_i$, and thus $D_i$ has nonempty interior.

Let $x \in \mathbb{R}^m$. For all $i \notin I$, $D_i$ is contained in an affine hyperplane, and a finite union of affine hyperplanes does not contain any nontrivial ball. As a consequence, for all $n \in \mathbb{N}$, the ball $B(x,\frac{1}{n})$ is not contained in $\bigcup_{i \notin I} D_i$ and thus there exists $i_n \in I$ such that $D_{i_n} \cap B(x,\frac{1}{n}) \neq \emptyset$. Since $I$ is finite, there exists $i \in I$ such that $i_n = i$ for infinitely many $n$, and thus $x \in \overline{D_i}$. 

We have shown that for all $x \in \mathbb{R}^m$ there exists $i \in I$ such that $x \in \overline{D_i} = D_i$, which means that
\[\bigcup_{i \in I} D_i = \mathbb{R}^m.\]

Hence, the set $\Pi : = \{ D_i, i \in I \}$ is admissible with respect to $g$.
\end{proof}

\begin{prop} \label{boundary included in a union of hyperplanes}
Let $h : \mathbb{R}^l \rightarrow \mathbb{R}^m$ be a continuous piecewise linear function and let $\mathcal{P}$ be a finite set of closed polyhedra of $\mathbb{R}^m$. Then 
\begin{itemize}
    \item for all $D \in \mathcal{P}$, $h^{-1}(D)$ is a finite union of closed polyhedra;
    \item $\bigcup_{D \in \mathcal{P}} \partial h^{-1}(D)$ is contained in a finite union of hyperplanes $\bigcup_{k=1}^s A_k$.
\end{itemize}
\end{prop}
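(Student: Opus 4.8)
The plan is to reduce everything to the affine case via the definition of continuous piecewise linearity, then apply Proposition \ref{reciprocal image of a polyhedron}. Since $h$ is continuous piecewise linear, there exist finitely many closed polyhedra $C_1, \dots, C_r$ of $\mathbb{R}^l$ with $\bigcup_{i=1}^r C_i = \mathbb{R}^l$ and, for each $i$, an affine map $\tilde h_i : \mathbb{R}^l \to \mathbb{R}^m$ agreeing with $h$ on $C_i$. For a fixed $D \in \mathcal{P}$ I would write
\[
h^{-1}(D) = \bigcup_{i=1}^r \bigl( h^{-1}(D) \cap C_i \bigr) = \bigcup_{i=1}^r \bigl( \tilde h_i^{-1}(D) \cap C_i \bigr).
\]
By Proposition \ref{reciprocal image of a polyhedron}, each $\tilde h_i^{-1}(D)$ is a closed polyhedron, and its intersection with the closed polyhedron $C_i$ is again a closed polyhedron (intersections of closed polyhedra being closed polyhedra). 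This proves the first bullet: $h^{-1}(D)$ is a finite union of closed polyhedra, say $h^{-1}(D) = \bigcup_{s} P_{D,s}$.

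For the second bullet I would first record the elementary topological fact that $\partial(A \cup B) \subseteq \partial A \cup \partial B$, hence by induction $\partial\bigl(\bigcup_s P_{D,s}\bigr) \subseteq \bigcup_s \partial P_{D,s}$. Next, for a single closed polyhedron $P = \{x : a_j^T x + b_j \le 0,\ j \in J\}$, discarding trivial inequalities (those with $a_j = 0$, which either are always satisfied or make $P$ empty, in which case $\partial P = \emptyset$), one checks that $\partial P \subseteq \bigcup_{j : a_j \ne 0} \{x : a_j^T x + b_j = 0\}$: if $x \in P$ satisfies all constraints strictly then, by continuity, a whole ball around $x$ lies in $P$, so $x \in \ring P$; contrapositively, a point of $\partial P = P \setminus \ring P$ must saturate one of the defining inequalities. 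Combining these two facts, each $\partial P_{D,s}$ lies in a finite union of affine hyperplanes, hence so does $\partial h^{-1}(D)$, and finally so does $\bigcup_{D \in \mathcal{P}} \partial h^{-1}(D)$, since $\mathcal{P}$ is finite; collecting all the hyperplanes that appear gives the desired $A_1, \dots, A_s$.

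The argument is essentially bookkeeping; the only mildly delicate points are handling degenerate defining inequalities ($a_j = 0$) and the empty polyhedron, and invoking $\partial(\bigcup_s P_{D,s}) \subseteq \bigcup_s \partial P_{D,s}$ correctly — none of which is a real obstacle.
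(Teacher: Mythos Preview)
Your proof is correct and follows essentially the same route as the paper: decompose $h^{-1}(D)$ into pieces $h^{-1}(D)\cap C_i$ using a cover on which $h$ is affine, invoke Proposition~\ref{reciprocal image of a polyhedron} to see each piece is a closed polyhedron, then use $\partial(\bigcup_s P_s)\subseteq\bigcup_s\partial P_s$ and the fact that the boundary of a closed polyhedron lies in its defining hyperplanes. The paper's proof is slightly terser (it invokes an admissible set $\Pi$ rather than the raw definition, and does not spell out the $\partial P$ argument or the degenerate-inequality cases), but the underlying argument is the same.
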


\begin{proof}
Consider $\Pi$ an admissible set of closed polyhedra with respect to $h$. Let $D \in \mathcal{P}$. Since $\bigcup_{C \in \Pi} C = \mathbb{R}^l$, we can write
\[h^{-1}(D) = h^{-1}(D) \cap \left(\bigcup_{C \in \Pi} C\right) = \bigcup_{C \in \Pi} \left(h^{-1}(D) \cap  C \right).\]
For all $C \in \Pi$, $h$ is linear over $C$, so $h^{-1}(D) \cap C$ is a polyhedron (see Proposition \ref{reciprocal image of a polyhedron}). This shows the first point of the proposition.

Since $h^{-1}(D) \cap C$ is a polyhedron, $\partial \left(h^{-1}(D) \cap C \right)$ is contained in a finite union of hyperplanes. In topology, we have
\[\partial \left[ \bigcup_{C \in \Pi} \left(h^{-1}(D) \cap  C \right)\right] \ \subset \ \bigcup_{C \in \Pi} \partial  \left( h^{-1}(D) \cap  C \right),\]
which shows that $\partial   \left[ \bigcup_{C \in \Pi} \left(h^{-1}(D) \cap  C \right)\right]$ i.e. $\partial h^{-1}(D)$ is contained in a finite union of hyperplanes too. 
This is true for any $D \in \mathcal{P}$, and since $\mathcal{P}$ is finite, this is also true of the union $\bigcup_{D \in \mathcal{P}} \partial h^{-1}(D)$.
\end{proof}

\subsection{Neural networks}\label{Neural networks-sec}

We consider fully connected feedforward neural networks, with ReLU activation function. We index the layers in reverse order, from $K$ to $0$, for some $K \geq 2$. The input layer is the layer $K$, the output layer is the layer $0$, and between them are $K-1$ \emph{hidden} layers. For $k \in \llbracket 0, K \rrbracket$, we denote by $n_k \in \mathbb{N}$ the number of neurons of the layer $k$. This means the information contained at the layer $k$ is a $n_k$-dimensional vector. 

Let $k \in \llbracket 0, K-1 \rrbracket$. We denote the weights between the layer $k+1$ and the layer $k$ with a matrix $M^{k} \in \mathbb{R}^{n_k \times n_{k+1}}$, and we consider a bias $b^{k} \in \mathbb{R}^{n_k}$ in the layer $k$. If $k \neq 0$, we add a ReLU activation function. If $x \in \mathbb{R}^{n_{k+1}}$ is the information contained at the layer $k+1$, the layer $k$ contains:
\[\begin{cases}\sigma(M^{k} x + b^{k}) & \text{if } k \neq 0 \\M^{0} x + b^{0} & \text{if } k=0.\end{cases}\]

The parameters of the network can be summarized in the couple $(\mathbf{M},\mathbf{b})$, where $\mathbf{M} = (M^{0}, M^{1}, \dots , M^{K-1}) \in \mathbb{R}^{n_0 \times n_1} \times \dots \times \mathbb{R}^{n_{K-1} \times n_K}$ and $\mathbf{b} = (b^{0}, b^{1}, \dots , b^{K-1}) \in \mathbb{R}^{n_0} \times \dots \times \mathbb{R}^{n_{K-1}}$.
We formalize the transformation implemented by one layer of the network with the following definition.
\begin{df}\label{functions h}
For a network with parameters $(\mathbf{M}, \mathbf{b})$, we define the family of functions $(h_0,\dots, h_{K-1})$ such that for all $k \in \llbracket 0 , K-1 \rrbracket$, $h_k: \mathbb{R}^{n_{k+1}} \rightarrow \mathbb{R}^{n_k}$ and for all $x \in \mathbb{R}^{n_k}$,
\[h_k(x)=\begin{cases}\sigma(M^{k} x + b^{k}) & \text{if } k \neq 0 \\M^{0} x + b^{0} & \text{if } k=0.\end{cases}\]
\end{df}

 The function implemented by the network is then
 \begin{equation}f_{\mathbf{M},\mathbf{b}} = h_0 \circ h_1 \circ \dots \circ h_{K-1} : \mathbb{R}^{n_K} \longrightarrow \mathbb{R}^{n_0}.\end{equation}
The network with its parameters are represented in Figure \ref{NN representation} in the main part.

For all $l \in \llbracket 0, K-1 \rrbracket$, we denote $\mathbf{M}^{\leq l} = (M^{0}, M^{1}, \dots , M^{l})$ and $\mathbf{b}^{\leq l} = (b^{0}, b^{1}, \dots, b^{l})$.

\begin{remnum}\label{hk is CPL}
Since the vectorial ReLU function is continuous piecewise linear, Proposition \ref{composition CPL functions} guarantees that the functions $h_k$ are continuous piecewise linear.
\end{remnum}

We now define a few more functions associated to a network. 

\begin{df}\label{functions hlin}
For a network with parameters $(\mathbf{M}, \mathbf{b})$, we define the family of functions $(h^{lin}_0,\dots, h^{lin}_{K-1})$ such that for all $k \in \llbracket 0 , K-1 \rrbracket$, $h_k^{lin}: \mathbb{R}^{n_{k+1}} \rightarrow \mathbb{R}^{n_k}$ and for all $x \in \mathbb{R}^{n_{k+1}}$,
\[h_k^{lin} (x) = M^kx + b^k.\]
\end{df}

The functions $h_k^{lin}$ correspond to the linear part of the transformation implemented by the network between two layers, before applying the activation $\sigma$.

\begin{df}\label{functions f}
For a network with parameters $(\mathbf{M}, \mathbf{b})$, we define the family of functions $(f_K, f_{K-1}, \dots, f_{0})$ as follows:

\begin{itemize}
\item $f_{K} = id_{\mathbb{R}^{n_K}}$,
\item for all $k \in \llbracket 0 , K-1 \rrbracket$, \quad  $f_k = h_k \circ h_{k+1} \circ \dots \circ h_{K-1}$.
\end{itemize}

\end{df}

\begin{rem}
In particular we have $f_0 = f_{\mathbf{M},\mathbf{b}}$.
\end{rem}

The function $f_k:\mathbb{R}^{n_K} \mapsto \mathbb{R}^{n_k}$ represents the transformation implemented by the network between the input layer and the layer $k$.

\begin{df}\label{functions g}
For a network with parameters $(\mathbf{M}, \mathbf{b})$, we define the sequence $(g_0,\dots, g_{K})$ as follows:

\begin{itemize}
\item $g_{0} = id_{\mathbb{R}^{n_0}}$,
\item for all $k \in \llbracket 1 , K \rrbracket$, \quad $g_k  = h_0 \circ h_1 \circ \dots \circ h_{k-1}$.
\end{itemize}

\end{df}

\begin{rem}We have in particular
\begin{itemize}
\item $g_K = f_{\mathbf{M,b}}$;
\item for all $k \in \llbracket 0, K \rrbracket$, $f_{\mathbf{M},\mathbf{b}} = g_k\circ f_k$.
\end{itemize}
\end{rem}

The function $g_k: \mathbb{R}^{n_k} \mapsto \mathbb{R}^{n_0}$ represents the transformation implemented by the network between the layer $k$ and the output layer.

In this paper the functions implemented by the networks are considered on a subset $\Omega \subset \mathbb{R}^{n_K}$. The successive layers of a network project this subset onto the spaces $\mathbb{R}^{n_k}$, inducing a subset $\Omega_k$ of $\mathbb{R}^{n_k}$ for all $k$, as in the following definition.
\begin{df}\label{def omega_k}
For a network with parameters $(\mathbf{M}, \mathbf{b})$, for any $\Omega \subset \mathbb{R}^{n_K}$, we denote for all $k \in \llbracket 0,K \rrbracket$, 
\[\Omega_k = f_k (\Omega).\]
\end{df}

\begin{df}\label{def hyperplanes}
For a network with parameters $(\mathbf{M}, \mathbf{b})$, for all $k \in \llbracket 2, K \rrbracket$, for all $i \in \llbracket 1, n_{k-1} \rrbracket$, we define
\[H_i^k = \{x \in \mathbb{R}^{n_k} , \ M^{k-1}_{i,.}x + b_i^{k-1} = 0\}.\]
\end{df}

\begin{rem}
When $M^{k-1}_{i,.} \neq 0$, the set $H^{k}_i$ is a hyperplane.
\end{rem}

\begin{remnum}\label{notation tilde}
The objects defined in Definitions \ref{functions h}, \ref{functions hlin}, \ref{functions f}, \ref{functions g}, \ref{def omega_k} and \ref{def hyperplanes} all depend on $(\mathbf{M}, \mathbf{b})$, but to simplify the notation we do not write it explicitly. To disambiguate when manipulating a second network, whose parameters we will denote by $(\mathbf{\tilde{M}}, \mathbf{\tilde{b}})$, we will denote by $\tilde{h}_k$, $\tilde{h}_k^{lin}$,  $\tilde{f}_k$, $\tilde{g}_k$, $\tilde{\Omega}_k$ and $\tilde{H}^{k}_i $ the corresponding objects.
\end{remnum}

\begin{prop}\label{fk and gk are CPL}
For all $k \in \llbracket 0 , K\rrbracket$, $f_k$ and $g_k$ are continuous piecewise linear.
\end{prop}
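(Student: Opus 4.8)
The plan is to prove both claims by induction, using that each layer map $h_k$ is continuous piecewise linear (Remark \ref{hk is CPL}) and that the composition of two continuous piecewise linear functions is again continuous piecewise linear (Proposition \ref{composition CPL functions}).

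First I would treat $f_k$ by downward induction on $k$, from $k=K$ down to $k=0$. The base case $k=K$ is immediate: $f_K = id_{\mathbb{R}^{n_K}}$ is affine, and an affine map $x \mapsto Ax+b$ is continuous piecewise linear because $\mathbb{R}^{n_K}$ is itself a closed polyhedron, so that the single-polyhedron family $\{\mathbb{R}^{n_K}\}$ witnesses the definition. For the inductive step, assume $f_{k+1}$ is continuous piecewise linear for some $k \in \llbracket 0, K-1 \rrbracket$. By Definition \ref{functions f}, $f_k = h_k \circ h_{k+1} \circ \dots \circ h_{K-1} = h_k \circ f_{k+1}$. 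Since $h_k$ is continuous piecewise linear by Remark \ref{hk is CPL} and $f_{k+1}$ is continuous piecewise linear by the induction hypothesis, Proposition \ref{composition CPL functions} gives that $f_k$ is continuous piecewise linear.

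The argument for $g_k$ is symmetric, carried out by upward induction on $k$ from $k=0$ to $k=K$. The base case $g_0 = id_{\mathbb{R}^{n_0}}$ is affine, hence continuous piecewise linear as above. For the inductive step, if $g_k$ is continuous piecewise linear for some $k \in \llbracket 0, K-1 \rrbracket$, then by Definition \ref{functions g} we have $g_{k+1} = h_0 \circ \dots \circ h_{k-1} \circ h_k = g_k \circ h_k$, which is continuous piecewise linear by Proposition \ref{composition CPL functions} combined with Remark \ref{hk is CPL}.

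I do not expect any genuine obstacle here: the only points requiring (routine) care are the base cases — observing that the identity is affine and that affine maps are continuous piecewise linear directly from the definition — and correctly tracking the composition conventions $f_K = id_{\mathbb{R}^{n_K}}$ and $g_0 = id_{\mathbb{R}^{n_0}}$ so that the recursions $f_k = h_k \circ f_{k+1}$ and $g_{k+1} = g_k \circ h_k$ hold at the endpoints. In particular, taking $k=0$ (equivalently $k=K$) yields that $f_{\mathbf{M},\mathbf{b}} = f_0 = g_K$ is continuous piecewise linear, as noted after the statement.
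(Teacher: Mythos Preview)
Your proposal is correct and follows essentially the same argument as the paper: both proceed by induction using the identities $f_k = h_k \circ f_{k+1}$ and $g_{k+1} = g_k \circ h_k$, together with Remark~\ref{hk is CPL} and Proposition~\ref{composition CPL functions}. Your treatment of the base cases is slightly more explicit than the paper's, but the content is identical.
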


\begin{proof}
We show this by induction: for the initialisation we have $f_K = id_{\mathbb{R}^{n_K}}$ which is continuous piecewise linear. Now let $k \in \llbracket 0, K-1 \rrbracket$ and assume $f_{k+1}$ is continuous piecewise linear. By definition, we have $f_k = h_k \circ f_{k+1}$. The function $h_k$ is continuous piecewise linear as noted in Remark \ref{hk is CPL}. By Proposition \ref{composition CPL functions}, the composition of two continuous piecewise linear functions is continuous piecewise linear, so $f_k$ is continuous piecewise linear. The conclusion follows by induction.

We do the same for $(g_0, \dots, g_K)$ starting with $g_0$: first we have $g_0 = id_{\mathbb{R}^{n_0}}$ which is continuous piecewise linear, then for all $k \in \llbracket 1, K \rrbracket$, we have $g_k = g_{k-1} \circ h_{k-1}$, and we conclude by composition of two continuous piecewise linear functions. 
\end{proof}

\begin{coro}
The function $f_{\mathbf{M},\mathbf{b}}$ is continuous piecewise linear.
\end{coro}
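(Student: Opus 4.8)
The plan is to simply invoke Proposition~\ref{fk and gk are CPL}, which has just been established, at the endpoint index. Recall from Definition~\ref{functions f} and the subsequent remark that $f_0 = h_0 \circ h_1 \circ \dots \circ h_{K-1} = f_{\mathbf{M},\mathbf{b}}$ (equivalently, from Definition~\ref{functions g} and its remark, $g_K = f_{\mathbf{M},\mathbf{b}}$). Proposition~\ref{fk and gk are CPL} asserts that $f_k$ is continuous piecewise linear for every $k \in \llbracket 0, K \rrbracket$, and in particular for $k = 0$. Hence $f_{\mathbf{M},\mathbf{b}} = f_0$ is continuous piecewise linear, which is exactly the claim.

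There is essentially no obstacle here: the work has already been done in the inductive argument of Proposition~\ref{fk and gk are CPL}, which chains together Remark~\ref{hk is CPL} (each layer map $h_k$ is continuous piecewise linear, since the vectorial ReLU is and affine maps are) with Proposition~\ref{composition CPL functions} (compositions of continuous piecewise linear functions are continuous piecewise linear). The only thing to be careful about is matching the index: one must point out explicitly that $f_{\mathbf{M},\mathbf{b}}$ coincides with the function $f_0$ (or $g_K$) appearing in Proposition~\ref{fk and gk are CPL}, so that the corollary is a genuine special case rather than a separate statement. No new computation is needed.
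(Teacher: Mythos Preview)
Your proof is correct and mirrors the paper's own argument exactly: the paper simply notes that $f_{\mathbf{M},\mathbf{b}} = f_0$ and invokes Proposition~\ref{fk and gk are CPL}. Your additional remarks on why this works (via Remark~\ref{hk is CPL} and Proposition~\ref{composition CPL functions}) are accurate but go beyond what the paper spells out for this immediate corollary.
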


\begin{proof}
It comes immediately from $f_{\mathbf{M},\mathbf{b}} = f_0$ and Proposition \ref{fk and gk are CPL}.
\end{proof}
Recall the definition of an admissible set with respect to a continuous piecewise linear function (Definition \ref{def admissible sets of polyhedra}). Proposition \ref{fk and gk are CPL} allows the following definition.
\begin{df}\label{def of an admissible list Pi}
Consider a network parameterization $\mathbf{(M,b)}$, and the functions $g_k$ associated to it. We say that a list of sets of closed polyhedra $\boldsymbol{\Pi} = (\Pi_1, \dots, \Pi_{K-1})$ is \textit{admissible} with respect to $(\mathbf{M}, \mathbf{b})$ iif for all $k \in \llbracket 1, K-1 \rrbracket$, the set $\Pi_k$ is admissible with respect to $g_k$.
\end{df}
\begin{rem}For a list $\boldsymbol{\Pi} = (\Pi_1, \dots, \Pi_{K-1})$, for all $l \in \llbracket 1, K-1 \rrbracket$, we denote $\boldsymbol{\Pi}^{\leq l} = (\Pi_1, \dots, \Pi_{l})$. If $\boldsymbol{\Pi}$ is admissible with respect to $\mathbf{(M,b)}$, then $\boldsymbol{\Pi}^{\leq l}$ is admissible with respect to $(\mathbf{M}^{\leq l}, \mathbf{b}^{\leq l})$.
\end{rem}

\begin{prop}For any network parameterization $\mathbf{(M,b)}$, there always exists a list of sets of closed polyhedra $\boldsymbol{\Pi}$ that is admissible with respect to $\mathbf{(M,b)}$.
\end{prop}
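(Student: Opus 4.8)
The plan is to simply assemble the list layer by layer, combining two results already established above. First I would fix $k \in \llbracket 1, K-1 \rrbracket$. By Proposition \ref{fk and gk are CPL}, the function $g_k : \mathbb{R}^{n_k} \to \mathbb{R}^{n_0}$ is continuous piecewise linear. Hence Proposition \ref{proposition CPL functions} applies to $g_k$ and provides a set of closed polyhedra $\Pi_k$ of $\mathbb{R}^{n_k}$ that is admissible with respect to $g_k$ in the sense of Definition \ref{def admissible sets of polyhedra}; that is, $\Pi_k$ is finite, its union is all of $\mathbb{R}^{n_k}$, $g_k$ is linear on each $D \in \Pi_k$, and every $D \in \Pi_k$ has nonempty interior.

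Since the index set $\llbracket 1, K-1 \rrbracket$ is finite, we may do this for every $k$ and form $\boldsymbol{\Pi} = (\Pi_1, \dots, \Pi_{K-1})$. By Definition \ref{def of an admissible list Pi}, $\boldsymbol{\Pi}$ is admissible with respect to $(\mathbf{M}, \mathbf{b})$, which is exactly the claim. There is no genuine obstacle here: the only point worth stating explicitly is that because the number of hidden layers is finite, no transfinite or limiting argument is needed, and that the admissibility notion for a list is, by design, just the conjunction over $k$ of the single-function admissibility notion — so the statement reduces term-by-term to Proposition \ref{proposition CPL functions}.
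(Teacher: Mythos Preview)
Your proposal is correct and follows essentially the same approach as the paper's own proof: for each $k$, invoke Proposition \ref{fk and gk are CPL} to get that $g_k$ is continuous piecewise linear, then Proposition \ref{proposition CPL functions} to obtain an admissible $\Pi_k$, and finally assemble $\boldsymbol{\Pi} = (\Pi_1,\dots,\Pi_{K-1})$. One minor quibble: finiteness of $\Pi_k$ is not part of Definition \ref{def admissible sets of polyhedra} (though the $\Pi_k$ produced by Proposition \ref{proposition CPL functions} happens to be finite), so you need not list it among the defining properties.
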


\begin{proof}
For all $k\in \llbracket 1 , K-1 \rrbracket$, since $g_k$ is continuous piecewise linear, Proposition \ref{proposition CPL functions} guarantees that there exists an admissible set of polyhedra $\Pi_k$ with respect to $g_k$. We simply define $\boldsymbol{\Pi} = (\Pi_1, \dots, \Pi_{K-1})$.
\end{proof}

\begin{df}
For a parameterization $\mathbf{(M,b)}$ and a list $\boldsymbol{\Pi}$ admissible with respect to $\mathbf{(M,b)}$, for all $k \in \llbracket 1, K-1 \rrbracket$, for all $D \in \Pi_k$, since $g_k$ is linear over $D$ and $D$ has nonempty interior, we can define $V^k(D) \in \mathbb{R}^{n_0 \times n_k}$ and $c^k(D) \in \mathbb{R}^{n_0}$ as the unique couple that satisfies:
\[\forall x \in D, \quad g_k(x) = V^k(D) x + c^k(D).\]
\end{df}

We now introduce the equivalence relation between parameterizations, often referred to as \emph{equivalence modulo permutation and positive rescaling}.

\begin{df}[Equivalent parameterizations]\label{equivalence def}
If $(\mathbf{M},\mathbf{b})$ and $(\mathbf{\tilde{M}}, \mathbf{\tilde{b}})$ are two network parameterizations, we say that $(\mathbf{M},\mathbf{b})$ is \emph{equivalent modulo permutation and positive rescaling}, or simply \emph{equivalent}, to $(\mathbf{\tilde{M}}, \mathbf{\tilde{b}})$, and we write $(\mathbf{M},\mathbf{b}) \sim (\mathbf{\tilde{M}}, \mathbf{\tilde{b}})$, if and only if there exist:
\begin{itemize}
\item a family of permutations $\boldsymbol{\varphi} = (\varphi_0, \dots , \varphi_{K}) \in \mathfrak{S}_{n_0} \times \dots \times \mathfrak{S}_{n_{K}}$, with $\varphi_0 = id_{\llbracket 1 , n_0 \rrbracket}$ and $\varphi_{K} = id_{\llbracket 1 , n_K \rrbracket}$,
\item a family of vectors $\boldsymbol{\lambda}=(\lambda^{0}, \lambda^{1}, \dots, \lambda^{K}) \in (\mathbb{R_+^*})^{n_0} \times \dots \times( \mathbb{R_+^*})^{n_{K}}$, with $\lambda^{0} = \One_{n_0}$ and $\lambda^{K} = \One_{n_K}$,
\end{itemize}
such that for all $k \in \llbracket 0 , K-1 \rrbracket$,
\begin{equation}\label{equivalence expression}\begin{cases}\tilde{M}^{k}= P_{\varphi_{k}} \Diag(\lambda^{k} )M^{k}\Diag(\lambda^{k+1})^{-1}P_{\varphi_{k+1}}^{-1} \\
\tilde{b}^{k} = P_{\varphi_k}\Diag(\lambda^{k} ) b^{k}.\end{cases} \end{equation} 
\end{df}

\begin{rems}
\begin{enumerate} \ \newline
\vspace{-10pt}
\item Recall that we denote by $\frac{1}{\lambda^{k+1}}$ the vector whose components are $\frac{1}{\lambda^{k+1}_i}$. Note that $\Diag(\lambda^{k+1})^{-1} = \Diag(\frac{1}{\lambda^{k+1}})$. Using \eqref{permutation matrix}, for all \mbox{$k \in \llbracket0 , K-1 \rrbracket$}, (\ref{equivalence expression}) means that for all $(i,j) \in \llbracket 1 , n_k \rrbracket \times \llbracket 1 , n_{k+1} \rrbracket$,
\[\tilde{M}^{k}_{i,j} = \frac{\lambda^{k}_{\varphi_k^{-1}(i)}}{\lambda^{k+1}_{\varphi_{k+1}^{-1}(j)}} M^{k}_{\varphi_k^{-1}(i),\varphi_{k+1}^{-1}(j)}\]
and
\[\tilde{b}^{k}_{i} = \lambda^{k}_{\varphi_k^{-1}(i)} b^{k}_{\varphi_k^{-1}(i)}.\]
\item We go from a parameterization to an equivalent one by:
\begin{itemize}
\item permuting the neurons of each hidden layer $k$ with a permutation $\varphi_k$;
\item for each hidden layer $k$, multiplying all the weights of the edges arriving (from the layer $k+1$) to the neuron $j$, as well as the bias $b^k_j$, by some positive number $\lambda^k_j$, and multiplying all the weights of the edges leaving (towards the layer $k-1$) the neuron $j$ by $\frac{1}{\lambda^k_j}$.
\end{itemize}
\end{enumerate}
\end{rems}

\begin{prop}\label{the relation is an equivalence relation}
The relation $\sim$ is an equivalence relation. 
\end{prop}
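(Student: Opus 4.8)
The plan is to verify the three defining properties of an equivalence relation. Reflexivity is immediate: taking $\boldsymbol{\varphi}$ to be the family of identity permutations and $\boldsymbol{\lambda}$ the family of all-ones vectors makes $P_{\varphi_k} = \Id_{n_k}$ and $\Diag(\lambda^k) = \Id_{n_k}$ for every $k$, so \eqref{equivalence expression} reduces to $\tilde M^k = M^k$ and $\tilde b^k = b^k$, i.e. $(\Mbf, \bbf) \sim (\Mbf, \bbf)$. Symmetry and transitivity then reduce to bookkeeping with permutation and diagonal matrices.

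The engine for both is the elementary identity
\[
P_\varphi \Diag(v) P_\varphi^{-1} = \Diag(P_\varphi v), \qquad \varphi \in \mathfrak{S}_m,\ v \in \RR^m,
\]
which follows entrywise from \eqref{permutation matrix-0} and \eqref{permutation matrix}, together with $P_\varphi P_\psi = P_{\varphi \circ \psi}$, $\Diag(u)\Diag(v) = \Diag(u \odot v)$ (entrywise product), and $\Diag(\lambda)^{-1} = \Diag(1/\lambda)$; these let one slide permutation matrices freely past diagonal matrices. For symmetry, starting from $(\Mbf, \bbf) \sim (\Mbft, \bbft)$ via $(\boldsymbol{\varphi}, \boldsymbol{\lambda})$, I would invert \eqref{equivalence expression}, obtaining $M^k = \Diag(\lambda^k)^{-1} P_{\varphi_k}^{-1} \tilde M^k P_{\varphi_{k+1}}\Diag(\lambda^{k+1})$ and $b^k = \Diag(\lambda^k)^{-1}P_{\varphi_k}^{-1}\tilde b^k$, and then rewrite $\Diag(\lambda^k)^{-1}P_{\varphi_k}^{-1}$ as $P_{\varphi_k^{-1}}\Diag\!\big(P_{\varphi_k}(1/\lambda^k)\big)$ using the sliding identities. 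This exhibits $(\Mbft, \bbft) \sim (\Mbf, \bbf)$ via $\psi_k := \varphi_k^{-1}$ and $\mu^k := P_{\varphi_k}(1/\lambda^k)$; it then remains only to note that each $\mu^k$ has strictly positive entries, that $\psi_0$ and $\psi_K$ are identities, and that $\mu^0 = \One_{n_0}$, $\mu^K = \One_{n_K}$.

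For transitivity, given $(\Mbf, \bbf) \sim (\Mbft, \bbft)$ via $(\boldsymbol{\varphi}, \boldsymbol{\lambda})$ and $(\Mbft, \bbft) \sim (\hat{\Mbf}, \hat{\bbf})$ via $(\boldsymbol{\psi}, \boldsymbol{\mu})$, I would substitute the first family of relations into the second and, sliding the intermediate permutation matrices outward with the same identities, collect the result into the form \eqref{equivalence expression} with $\chi_k := \psi_k \circ \varphi_k$ and $\nu^k := (P_{\varphi_k}^{-1}\mu^k)\odot\lambda^k$; the endpoint constraints ($\chi_0, \chi_K$ identities, $\nu^0 = \One_{n_0}$, $\nu^K = \One_{n_K}$) and the positivity of the $\nu^k$ are inherited from $\boldsymbol{\varphi}, \boldsymbol{\lambda}, \boldsymbol{\psi}, \boldsymbol{\mu}$, so $(\Mbf, \bbf) \sim (\hat{\Mbf}, \hat{\bbf})$.

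The only mildly delicate point throughout is staying consistent about the direction in which $P_\varphi$ permutes coordinates --- namely $(P_\varphi x)_i = x_{\varphi^{-1}(i)}$, cf. \eqref{permutation matrix} --- when moving the diagonal factors past the $P_{\varphi_{k+1}}$'s on the right-hand side of \eqref{equivalence expression}; once the sliding identities are pinned down, everything else is routine algebra with no conceptual obstacle. One could also package all three parts at once by checking that the pairs $(\boldsymbol{\varphi}, \boldsymbol{\lambda})$ form a group acting on parameterizations, the formulas above supplying the identity element, the inverses, and the composition respectively.
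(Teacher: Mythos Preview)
Your proposal is correct and follows essentially the same route as the paper: both hinge on the sliding identity between permutation and diagonal matrices (the paper states it as $\Diag(\lambda)P_\varphi = P_\varphi\Diag(P_\varphi^{-1}\lambda)$, equivalent to your $P_\varphi\Diag(v)P_\varphi^{-1} = \Diag(P_\varphi v)$), and your explicit formulas for the inverse data $(\psi_k,\mu^k) = (\varphi_k^{-1}, P_{\varphi_k}(1/\lambda^k))$ and the composed data $(\chi_k,\nu^k) = (\psi_k\circ\varphi_k, (P_{\varphi_k}^{-1}\mu^k)\odot\lambda^k)$ coincide with the paper's. Your closing remark about packaging the three parts as a group action is a nice conceptual gloss that the paper does not make explicit.
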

\begin{proof}
Let us first show the following equality, that we are going to use in the proof. For any $n \in \mathbb{N}^*$, $\lambda \in \mathbb{R}^{n}$ and $\varphi \in \mathfrak{S}_n$, 
\begin{equation}\label{Permuting P and D}
\Diag(\lambda) P_{\varphi} = P_{\varphi} \Diag(P_{\varphi}^{-1} \lambda).
\end{equation}
Indeed, $\Diag(\lambda) P_{\varphi}$ is the matrix obtained by multiplying each line $i$ of $P_{\varphi}$ by $\lambda_i$, so recalling \eqref{permutation matrix-0}, for all $i,j \in \llbracket 1,m\rrbracket$, we have
\begin{equation*}
(\Diag(\lambda) P_{\varphi})_{i,j} = \begin{cases}  \lambda_i & \text{if } \varphi(j) = i\\ 0 & \text{otherwise.} \end{cases}
\end{equation*}
At the same time, $P_{\varphi} \Diag(P_{\varphi}^{-1} \lambda)$ is the matrix obtained by multiplying each column $j$ of $P_{\varphi}$ by $(P_{\varphi}^{-1} \lambda)_j = \lambda_{\varphi(j)}$ (see \eqref{permutation matrix} and \eqref{Inverse of a permutation matrix}), so for all $i,j \in \llbracket 1,m\rrbracket$, we have
\begin{equation*}
(P_{\varphi} \Diag(P_{\varphi}^{-1} \lambda))_{i,j} = \begin{cases}  \lambda_{\varphi(j)} & \text{if } \varphi(j) = i\\ 0 & \text{otherwise.} \end{cases}
\end{equation*}
The two matrices are clearly equal.

We can now show the proposition.
\begin{itemize}
\item To show reflexivity we can take $\lambda^{k} = \One_{n_k}$ and $\varphi_k = id_{\llbracket 1 , n_k \rrbracket}$ for all $k \in \llbracket 0, K \rrbracket$.
\item Let us show symmetry. Assume a parameterization $\mathbf{(M,b)}$ is equivalent to another parameterization $\mathbf{(\tilde{M}, \tilde{b})}$. Let us denote by $\boldsymbol{\varphi}$ and $\boldsymbol{\lambda}$ the corresponding families of permutations and vectors, as in Definition \ref{equivalence def}. Inverting the expression of $\tilde{M}^k$ in Definition \ref{equivalence def} and using \eqref{Permuting P and D} twice, we have for all $k \in \llbracket 0, K-1 \rrbracket$:
\begin{equation*}
\begin{aligned}
& \tilde{M}^{k} = P_{\varphi_{k}} \Diag(\lambda^{k} )M^{k}\Diag(\lambda^{k+1})^{-1}P_{\varphi_{k+1}}^{-1} \\
 \Longleftrightarrow  \quad & \Diag(\lambda^{k})^{-1} P_{\varphi_{k}}^{-1} \tilde{M}^{k} P_{\varphi_{k+1}} \Diag(\lambda^{k+1}) = M^{k} \\
 \Longleftrightarrow  \quad & P_{\varphi_{k}}^{-1} \Diag(P_{\varphi_{k}} \lambda^{k})^{-1} \tilde{M}^{k}  \Diag(P_{\varphi_{k+1}}\lambda^{k+1}) P_{\varphi_{k+1}}  = M^{k},
\end{aligned}
\end{equation*}
so denoting $\tilde{\varphi}_k = \varphi_{k}^{-1}$ and $\tilde{\lambda}^{k} = (P_{\varphi_{k}} \lambda^{k})^{-1}$, and recalling that $P_{\varphi_k^{-1}} = P_{\varphi_k}^{-1}$, we have, for all \mbox{$k \in \llbracket 0, K-1 \rrbracket$},
\[ M^{k} = P_{\tilde{\varphi}_{k}} \Diag(\tilde{\lambda}^{k}) \tilde{M}^{k}  \Diag(\tilde{\lambda
}^{k+1})^{-1} P_{\tilde{\varphi}_{k+1}}^{-1} .\]
We show similarly that for all \mbox{$k \in \llbracket 0, K-1 \rrbracket$},
\[b^k = P_{\tilde{\varphi}_k} \Diag(\tilde{\lambda}_k)\tilde{b}^k.\]
We naturally have $\tilde{\varphi}_0 = id_{\llbracket 1, n_0 \rrbracket}$ and $\tilde{\varphi}_K = id_{\llbracket 1, n_K \rrbracket}$, as well as $\tilde{\lambda}^0 = \One_{n_0}$ and $\tilde{\lambda}^K = \One_{n_K}$.

This proves the symmetry of the relation.

\item  Let us show transitivity. Assume $\mathbf{(M,b)}$, $\mathbf{(\tilde{M}, \tilde{b})}$ and $\mathbf{(\check{M}, \check{b})}$ are three parameterizations such that $\mathbf{(M,b)} \sim \mathbf{(\tilde{M}, \tilde{b})}$ and $\mathbf{(\tilde{M}, \tilde{b})} \sim \mathbf{(\check{M}, \check{b})}$.

As in Definition \ref{equivalence def}, we denote by $\boldsymbol{\varphi}$, $\tilde{\boldsymbol{\varphi}}$, $\boldsymbol{\lambda}$ and $\tilde{\boldsymbol{\lambda}}$ the families of permutations and vectors such that, for all $k \in \llbracket 0 , K-1 \rrbracket$,
\[\begin{cases}\tilde{M}^{k} = P_{\varphi_{k}} \Diag(\lambda^{k} )M^{k}\Diag(\lambda^{k+1})^{-1}P_{\varphi_{k+1}}^{-1}  \\ \tilde{b}^{k} = P_{{\varphi}_{k}} \Diag({\lambda}^{k} ) b^{k},
\end{cases}\]
and
\[\begin{cases}
\check{M}^{k} = P_{\tilde{\varphi}_{k}} \Diag(\tilde{\lambda}^{k} )\tilde{M}^{k}\Diag(\tilde{\lambda}^{k+1})^{-1}P_{\tilde{\varphi}_{k+1}}^{-1} \\ \check{b}^{k} = P_{\tilde{\varphi}_{k}} \Diag(\tilde{\lambda}^{k} ) \tilde{b}^{k}.
\end{cases}\]

Combining these and using \eqref{Permuting P and D}, we have
\begin{equation*}
\begin{aligned}
\check{M}^{k} & = P_{\tilde{\varphi}_{k}} \Diag(\tilde{\lambda}^{k} )P_{\varphi_{k}} \Diag(\lambda^{k} )M^{k}\Diag(\lambda^{k+1})^{-1}P_{\varphi_{k+1}}^{-1} \Diag(\tilde{\lambda}^{k+1})^{-1}P_{\tilde{\varphi}_{k+1}}^{-1} \\
& = P_{\tilde{\varphi}_{k}} \left( \Diag(\tilde{\lambda}^{k} )P_{\varphi_{k}} \right) \Diag(\lambda^{k} )M^{k} \\ & \hspace{90pt} \cdot \Diag(\lambda^{k+1})^{-1}\left( \Diag(\tilde{\lambda}^{k+1})P_{\varphi_{k+1}}\right)^{-1} P_{\tilde{\varphi}_{k+1}}^{-1} \\
& = P_{\tilde{\varphi}_{k}} \left( P_{\varphi_{k}} \Diag(P_{\varphi_{k}}^{-1}\tilde{\lambda}^{k} ) \right) \Diag(\lambda^{k} )M^{k} \\
& \hspace{90pt} \cdot \Diag(\lambda^{k+1})^{-1}\left( P_{\varphi_{k+1}} \Diag( P_{\varphi_{k+1}}^{-1}\tilde{\lambda}^{k+1})\right)^{-1} P_{\tilde{\varphi}_{k+1}}^{-1} \\
& = P_{\tilde{\varphi}_{k}}  P_{\varphi_{k}} \Diag(P_{\varphi_{k}}^{-1}\tilde{\lambda}^{k} ) \Diag(\lambda^{k} )M^{k}  \\
& \hspace{90pt} \cdot \Diag(\lambda^{k+1})^{-1} \Diag(P_{\varphi_{k+1}}^{-1}\tilde{\lambda}^{k+1})^{-1}P_{\varphi_{k+1}}^{-1}P_{\tilde{\varphi}_{k+1}}^{-1}, 
\end{aligned}
\end{equation*}
and
\begin{equation*}
\begin{aligned}
\check{b}^{k}& = P_{\tilde{\varphi}_{k}} \Diag(\tilde{\lambda}^{k} )P_{\varphi_{k}} \Diag(\lambda^{k} ) b^{k} \\
 & = P_{\tilde{\varphi}_{k}}P_{\varphi_{k}} \Diag(P_{\varphi_{k}}^{-1}\tilde{\lambda}^{k} ) \Diag(\lambda^{k} ) b^{k}.
\end{aligned}
\end{equation*}

Hence denoting $\check{\varphi}_k = \tilde{\varphi}_{k} \circ \varphi_{k}$ and $\check{\lambda}^{k} = \Diag(P_{\varphi_{k}}^{-1}\tilde{\lambda}^{k}) \lambda^{k}$, for all $k \in \llbracket 0, K \rrbracket$, we see that, for $k \in \llbracket 0, K-1 \rrbracket$,
\[\check{M}^{k} = P_{\check{\varphi}_{k}} \Diag(\check{\lambda}^{k} )M^{k}\Diag(\check{\lambda}^{k+1})^{-1}P_{\check{\varphi}_{k+1}}^{-1}\]
and
\[\check{b}^{k} = P_{\check{\varphi}_{k}} \Diag(\check{\lambda}^{k} ) b^{k}.\]
Naturally, we also have $\check{\varphi}_0 = id_{\llbracket 1, n_0 \rrbracket}$ and $\check{\varphi}_K = id_{\llbracket 1, n_K \rrbracket}$, as well as $\check{\lambda}^0 = \One_{n_0}$ and $\check{\lambda}^K = \One_{n_K}$, which shows that $\mathbf{(M,b)} \sim \mathbf{(\check{M}, \check{b})}$.
\end{itemize}
\end{proof}

Recall the objects $h_k, f_k, g_k, \Omega_k, H_i^k$ associated to a parameterization $(\Mbf, \bbf)$, defined in Definitions \ref{functions h}, \ref{functions f}, \ref{functions g}, \ref{def omega_k} and \ref{def hyperplanes}, and recall that we denote by $\tilde{h}_k, \tilde{f}_k, \tilde{g}_k, \tilde{\Omega}_k$ and $ \tilde{H}_i^k$ the corresponding objects with respect to another parameterization $\mathbf{(\tilde{M}, \tilde{b})}$. We give in the following proposition the relations that link these objects when the two parameterizations $\mathbf{(M,b)}$ and $\mathbf{(\tilde{M}, \tilde{b})}$ are equivalent.

\begin{prop}\label{object relations for equivalent networks}
Assume $(\mathbf{M},\mathbf{b}) \sim (\mathbf{\tilde{M}}, \mathbf{\tilde{b}})$ and consider $\boldsymbol{\varphi}$ and $\boldsymbol{\lambda} $ as in Definition \ref{equivalence def}. Let $\boldsymbol{\Pi}$ be a list of sets of closed polyhedra that is admissible with respect to $\mathbf{(M,b)}$.
Then:
\begin{enumerate}
\item\label{relation between h and tilde(h)} for all $k \in \llbracket 0, K-1 \rrbracket$, 
\[\tilde{h}_k = P_{\varphi_k} \Diag(\lambda^{k}) \circ h_k \circ \Diag(\lambda^{k+1})^{-1}P_{\varphi_{k+1}}^{-1},\]
\item\label{relation between f g omega and tilde(f) tilde(g) tilde(omega)} for all $k \in \llbracket 0, K \rrbracket$,
\begin{equation}\label{expression of tilde(f)}
\tilde{f}_k = P_{\varphi_k}  \Diag(\lambda^{k}) \circ f_k,
\end{equation}
\[\tilde{g}_k = g_k \circ \Diag(\lambda^{k})^{-1} P_{\varphi_{k}}^{-1},\]
\[\tilde{\Omega}_k = P_{\varphi_{k}}\Diag(\lambda^{k}) \Omega_k,\]

\item for all $k \in \llbracket 2,K \rrbracket$, for all $i \in \llbracket 1 , n_{k-1} \rrbracket$,
\[\tilde{H}_i^k = P_{\varphi_{k}}\Diag(\lambda^{k})H_{\varphi_{k-1}^{-1}(i)}^k,\]
\item\label{relations between polyhedra} for all $k \in \llbracket 1, K-1 \rrbracket$, the set of closed polyhedra $\tilde{\Pi}_k = \{ P_{\varphi_{k}} \Diag(\lambda^{k}) D , D \in \Pi_k \}$ is admissible for $\tilde{g}_k$, i.e. the list $\boldsymbol{\tilde{\Pi}} = (\tilde{\Pi}_1, \dots, \tilde{\Pi}_{K-1})$ is admissible with respect to $\mathbf{(\tilde{M}, \tilde{b})}$.
\end{enumerate}
\end{prop}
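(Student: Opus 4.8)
The plan is to establish the five assertions in order, the first by a direct substitution and the others by short inductions on $k$, using throughout that any realizing families $\boldsymbol\varphi,\boldsymbol\lambda$ satisfy $\varphi_0 = id$, $\lambda^0 = \One_{n_0}$, $\varphi_K = id$, $\lambda^K = \One_{n_K}$, so that $P_{\varphi_0}\Diag(\lambda^0) = \Id_{n_0}$ and $P_{\varphi_K}\Diag(\lambda^K) = \Id_{n_K}$. The one genuinely substantive ingredient, which I would isolate first as a lemma, is that the vectorial ReLU commutes with any map of the form $P_\varphi\Diag(\lambda)$ when $\lambda\in(\RR_+^*)^m$: since $\sigma$ acts coordinatewise, satisfies $\sigma(\lambda_i t) = \lambda_i\sigma(t)$ for $\lambda_i>0$, and permuting coordinates commutes with a coordinatewise map, one checks on coordinates that $\sigma\bigl(P_\varphi\Diag(\lambda)y\bigr) = P_\varphi\Diag(\lambda)\sigma(y)$ for all $y$.

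Granting this, for $k\neq 0$ I would substitute the equivalence expressions \eqref{equivalence expression} and factor $P_{\varphi_k}\Diag(\lambda^k)$ out on the left, namely $\tilde M^k x + \tilde b^k = P_{\varphi_k}\Diag(\lambda^k)\bigl(M^k\Diag(\lambda^{k+1})^{-1}P_{\varphi_{k+1}}^{-1}x + b^k\bigr)$, then apply $\sigma$, push $P_{\varphi_k}\Diag(\lambda^k)$ through it by the lemma, and recognize $h_k$ evaluated at $\Diag(\lambda^{k+1})^{-1}P_{\varphi_{k+1}}^{-1}x$; for $k=0$ the same computation goes through with the activation absent, using $P_{\varphi_0}\Diag(\lambda^0)=\Id$. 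This is assertion \ref{relation between h and tilde(h)}. For assertion \ref{relation between f g omega and tilde(f) tilde(g) tilde(omega)}, the formula $\tilde f_k = P_{\varphi_k}\Diag(\lambda^k)\circ f_k$ follows by downward induction from $k=K$ (where both sides are the identity), using $\tilde f_k = \tilde h_k\circ\tilde f_{k+1}$, assertion \ref{relation between h and tilde(h)}, and the telescoping cancellation $\Diag(\lambda^{k+1})^{-1}P_{\varphi_{k+1}}^{-1}\cdot P_{\varphi_{k+1}}\Diag(\lambda^{k+1}) = \Id$; symmetrically $\tilde g_k = g_k\circ\Diag(\lambda^k)^{-1}P_{\varphi_k}^{-1}$ follows by upward induction from $k=0$ via $\tilde g_k = \tilde g_{k-1}\circ\tilde h_{k-1}$ and the same cancellation; and $\tilde\Omega_k = \tilde f_k(\Omega) = P_{\varphi_k}\Diag(\lambda^k)\Omega_k$ is then immediate from Definition \ref{def omega_k}.

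For the hyperplanes, I would expand $\tilde M^{k-1}x + \tilde b^{k-1} = P_{\varphi_{k-1}}\Diag(\lambda^{k-1})\bigl(M^{k-1}\Diag(\lambda^k)^{-1}P_{\varphi_k}^{-1}x + b^{k-1}\bigr)$ and read off the $i$-th coordinate: it vanishes iff the $\varphi_{k-1}^{-1}(i)$-th coordinate of the inner vector vanishes (the positive scalar $\lambda^{k-1}_{\varphi_{k-1}^{-1}(i)}$ drops out), i.e. iff $\Diag(\lambda^k)^{-1}P_{\varphi_k}^{-1}x\in H^k_{\varphi_{k-1}^{-1}(i)}$, which is exactly $x\in P_{\varphi_k}\Diag(\lambda^k)H^k_{\varphi_{k-1}^{-1}(i)}$. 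Finally, for assertion \ref{relations between polyhedra}, write $Q_k = P_{\varphi_k}\Diag(\lambda^k)$, an invertible linear map with inverse $\Diag(\lambda^k)^{-1}P_{\varphi_k}^{-1}$. The image of a closed polyhedron under $Q_k$ is again a closed polyhedron (replace each constraint $a_j^Tx + b_j\le 0$ by $(Q_k^{-T}a_j)^T y + b_j\le 0$), so each $\tilde D = Q_k D$ with $D\in\Pi_k$ is one; since $Q_k$ is a homeomorphism it carries $\bigcup_{D\in\Pi_k}D = \RR^{n_k}$ onto $\RR^{n_k}$ and carries nonempty interiors to nonempty interiors, so $\ring{\tilde D}\neq\emptyset$; and for $x\in\tilde D$, assertion \ref{relation between f g omega and tilde(f) tilde(g) tilde(omega)} gives $\tilde g_k(x) = g_k(Q_k^{-1}x) = V^k(D)Q_k^{-1}x + c^k(D)$, which is affine in $x$, so $\tilde g_k$ is linear on $\tilde D$. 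Hence each $\tilde\Pi_k$ is admissible with respect to $\tilde g_k$ and $\boldsymbol{\tilde\Pi}$ is admissible with respect to $(\mathbf{\tilde M},\mathbf{\tilde b})$ by Definition \ref{def of an admissible list Pi}. I expect no real obstacle beyond the commutation lemma for $\sigma$; the rest is bookkeeping with the telescoping cancellations, and one should only take care to treat the output layer $k=0$ (no activation) and the boundary cases $k=0,K$ of the inductions separately.
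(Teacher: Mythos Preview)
Your proposal is correct and follows essentially the same approach as the paper: the paper likewise proves item~1 by substituting \eqref{equivalence expression}, factoring $P_{\varphi_k}\Diag(\lambda^k)$ on the left, and checking coordinatewise that $\sigma$ commutes with this positive-diagonal-times-permutation map; it then obtains item~2 by the same downward/upward inductions with telescoping, item~3 by the same coordinate reading, and item~4 by the same homeomorphism argument (invoking Proposition~\ref{reciprocal image of a polyhedron} rather than rewriting the constraints explicitly as you do, but this is cosmetic). Your isolation of the ReLU commutation as a separate lemma is a clean organizational choice but not a different strategy.
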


\begin{proof}
\begin{enumerate}
\item Let $k \in \llbracket 0, K-1 \rrbracket$. If $k \neq 0$, we have from Definition \ref{functions h}:

\begin{equation*}
\begin{aligned}
\tilde{h}_k(x) & = \sigma(\tilde{M}^{k} x + \tilde{b}^{k}) \\
& = \sigma \left(P_{\varphi_{k}} \Diag(\lambda^{k})M^{k}\Diag(\lambda^{k+1})^{-1}P_{\varphi_{k+1}}^{-1} x \right.\\
& \left. \hspace{35pt} + P_{\varphi_k}\Diag(\lambda^{k} ) b^{k}\right) \\
& =\sigma\left(P_{\varphi_{k}} \Diag(\lambda^{k}) \left[M^{k}\Diag(\lambda^{k+1})^{-1}P_{\varphi_{k+1}}^{-1} x + b^{k} \right] \right). \\
\end{aligned}
\end{equation*}

Denote $y:=  \left[M^{k}\Diag(\lambda^{k+1})^{-1}P_{\varphi_{k+1}}^{-1} x + b^{k} \right]$. Let $i \in \llbracket 1, n_k \rrbracket$. Using \eqref{permutation matrix} and the fact that $ \lambda^{k}_{\varphi_k^{-1}(i)}$ is nonnegative, the $i^{\text{th}}$ coordinate of $\tilde{h}_k(x)$ is

\begin{equation*}
\begin{aligned}
\tilde{h}_k(x)_i = \left[\sigma\left(P_{\varphi_{k}} \Diag(\lambda^{k})y \right) \right]_i & = \sigma\left(\left[P_{\varphi_{k}} \Diag(\lambda^{k})y \right]_i \right) \\
 & = \sigma\left(\lambda^{k}_{\varphi_k^{-1}(i)} y_{\varphi_k^{-1}(i)}\right) \\
 & = \lambda^{k}_{\varphi_k^{-1}(i)} \sigma\left(y_{\varphi_k^{-1}(i)}\right)\\
 & =  \left[P_{\varphi_{k}} \Diag(\lambda^{k}) \sigma\left(y \right) \right]_i.
\end{aligned}
\end{equation*}

Finally, we find the expression of $\tilde{h}_k(x)$:
\begin{equation*}
\begin{aligned}
\tilde{h}_k(x)  & = P_{\varphi_{k}} \Diag(\lambda^{k}) \sigma\left(y\right) \\
& = P_{\varphi_{k}} \Diag(\lambda^{k}) \sigma\left( M^{k}\Diag(\lambda^{k+1})^{-1}P_{\varphi_{k+1}}^{-1} x + b^{k}  \right) \\
& = P_{\varphi_{k}} \Diag(\lambda^{k})h_k \left( \Diag(\lambda^{k+1})^{-1}P_{\varphi_{k+1}}^{-1} (x)\right).
\end{aligned}
\end{equation*}

This concludes the proof when $k \neq 0$.

The case $k=0$ is proven similarly but replacing the ReLU function $\sigma$ by the identity.

\item \begin{itemize}
\item We prove by induction the expression of $\tilde{f}_k$. 

For $k = K$, we have $\tilde{f}_K = f_K = id_{\mathbb{R}^{n_K}}$, and since $P_{\varphi_K} = \Id_{n_K}$ and $\lambda^{K} = \One_{n_K}$ the equality $\tilde{f}_K = P_{\varphi_K}  \Diag(\lambda^{K}) f_K $ holds.

Now let $k \in \llbracket 0 , K-1 \rrbracket$. Suppose the induction hypothesis is true for $\tilde{f}_{k+1}$. 
Using the expression of $\tilde{h}_k$ we just proved in 1 and the induction hypothesis, we have
\begin{equation*}
\begin{aligned}
\tilde{f}_k & = \tilde{h}_k \circ \tilde{f}_{k+1} \\
& = \left(P_{\varphi_k} \Diag(\lambda^{k}) \circ h_k \circ \Diag(\lambda^{k+1})^{-1}P_{\varphi_{k+1}}^{-1}\right) \circ \left(P_{\varphi_{k+1}}  \Diag(\lambda^{k+1}) \circ f_{k+1} \right) \\
& = P_{\varphi_k} \Diag(\lambda^{k})  \circ h_k \circ  f_{k+1} \\
& = P_{\varphi_k} \Diag(\lambda^{k}) \circ f_k.
\end{aligned}
\end{equation*}
This concludes the induction.

\item We prove similarly the expression of $\tilde{g}_k$, but starting from $k = 0$: first we have $\tilde{g}_0 = g_0 = id_{\mathbb{R}^{n_0}}$, and then, for $k \in \llbracket 0 , K-1 \rrbracket$, we write $\tilde{g}_{k+1} = \tilde{g}_{k} \circ \tilde{h}_{k}$ and we use the induction hypothesis and the expression of $\tilde{h}_k$.

\item Using the relation \eqref{expression of tilde(f)}, that we just proved, we obtain 
\[\tilde{\Omega}_k = \tilde{f}_k(\Omega) = P_{\varphi_k} \Diag(\lambda^{k})  f_k (\Omega) = P_{\varphi_k} \Diag(\lambda^{k}) \Omega_k.\]
\end{itemize}

\item Let $k \in \llbracket 2, K \rrbracket$ and $i \in \llbracket 1, n_{k-1} \rrbracket$. For all $x \in \mathbb{R}^{n_k}$, using \eqref{equivalence expression} and \eqref{permutation matrix},
\begin{equation*}
\begin{aligned}
x \in \tilde{H}^k_i & \quad \Longleftrightarrow \quad \tilde{M}^{k-1}_{i,.}x + \tilde{b}_i^{k-1} = 0 \\
& \quad \Longleftrightarrow \quad \left[ P_{\varphi_{k-1}} \Diag(\lambda^{k-1} )M^{k-1}\Diag(\lambda^{k})^{-1}P_{\varphi_{k}}^{-1} \right]_{i,.} x \\ & \hspace{50pt} + \left[P_{\varphi_{k-1}} \Diag(\lambda^{k-1} ) b^{k-1} \right]_{i} =0 \\
&  \quad \Longleftrightarrow  \quad \lambda^{k-1}_{\varphi_{k-1}^{-1}(i)}M^{k-1}_{\varphi_{k-1}^{-1}(i),.}\Diag(\lambda^{k})^{-1}P_{\varphi_{k}}^{-1} x + \lambda^{k-1}_{\varphi_{k-1}^{-1}(i)}b^{k-1}_{\varphi_{k-1}^{-1}(i)} = 0 \\
&  \quad \Longleftrightarrow  \quad \lambda^{k-1}_{\varphi_{k-1}^{-1}(i)} \left(M^{k-1}_{\varphi_{k-1}^{-1}(i),.}\Diag(\lambda^{k})^{-1}P_{\varphi_{k}}^{-1} x + b^{k-1}_{\varphi_{k-1}^{-1}(i)} \right) = 0 \\
& \quad \Longleftrightarrow \quad  M^{k-1}_{\varphi_{k-1}^{-1}(i),.}\Diag(\lambda^{k})^{-1}P_{\varphi_{k}}^{-1} x + b^{k-1}_{\varphi_{k-1}^{-1}(i)} = 0 \\
& \quad \Longleftrightarrow \quad \Diag(\lambda^{k})^{-1}P_{\varphi_{k}}^{-1} x \in H^k_{\varphi_{k-1}^{-1}(i)}. \\
\end{aligned}
\end{equation*}
Thus, $\tilde{H}^k_i = P_{\varphi_{k}}\Diag(\lambda^{k})H_{\varphi_{k-1}^{-1}(i)}^k$.

\item For all $D \in \Pi_k$, denote $ \tilde{D} = P_{\varphi_{k}} \Diag(\lambda^{k}) D$. We have $\tilde{\Pi}_k = \{ \tilde{D}, D \in \Pi_k \}$.

Let $D \in \Pi_k$. The matrix $P_{\varphi_{k}} \Diag(\lambda^{k})$ is invertible so, according to Proposition \ref{reciprocal image of a polyhedron}, $\tilde{D} = P_{\varphi_{k}} \Diag(\lambda^{k}) D$ is a closed polyhedron, and since $\ring{D} \neq \emptyset$ we also have $\ring{\tilde{D}} \neq \emptyset$. 

Now recall from Item \ref{relation between f g omega and tilde(f) tilde(g) tilde(omega)} that:
\[\tilde{g}_k  = g_k \circ \Diag(\lambda^{k})^{-1} P_{\varphi_{k}}^{-1} .\]
 For all $x \in \tilde{D}$, we have $\Diag(\lambda^{k})^{-1} P_{\varphi_{k}}^{-1} x \in D$. Since $\Pi_k$ is admissible with respect to $g_k$ (by definition of $\boldsymbol{\Pi}$), $g_k$ is linear on $D$, and thus the function $\tilde{g}_k$ is linear on $\tilde{D}$.

Again, since $\Pi_k$ is admissible with respect to $g_k$, we have $\bigcup_{D \in \Pi_k} D = \mathbb{R}^m$, and thus 
\begin{equation*}
\begin{aligned}
\bigcup_{\tilde{D} \in \tilde{\Pi}_k} \tilde{D} & = \bigcup_{D \in \Pi_k} P_{\varphi_{k}} \Diag(\lambda^{k}) D \\
& = P_{\varphi_{k}} \Diag(\lambda^{k}) \left( \bigcup_{D \in \Pi_k} D \right) \\
& =  P_{\varphi_{k}} \Diag(\lambda^{k}) \left( \mathbb{R}^m \right) \\
& = \mathbb{R}^m ,
\end{aligned}
\end{equation*}
which shows that $\tilde{\Pi}_k$ is admissible with respect to $\tilde{g}_k$. 

This being true for any $k \in \llbracket 1, K-1 \rrbracket$, we conclude that $\boldsymbol{\tilde{\Pi}}$ is admissible with respect to $\mathbf{(\tilde{M}, \tilde{b})}$.
 \end{enumerate}
\end{proof}

\begin{coro}\label{equivalent networks have same function}
If $(\mathbf{M},\mathbf{b}) \sim (\mathbf{\tilde{M}}, \mathbf{\tilde{b}})$, then $f_{\mathbf{M},\mathbf{b}} = f_{\mathbf{\tilde{M}}, \mathbf{\tilde{b}}}$.
\end{coro}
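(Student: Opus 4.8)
The plan is to read the statement off directly from Proposition \ref{object relations for equivalent networks}, which already records how every layer-wise object transforms under an equivalence, so essentially no new computation is needed. Concretely: since $(\mathbf{M},\mathbf{b}) \sim (\mathbf{\tilde{M}}, \mathbf{\tilde{b}})$, fix families $\boldsymbol{\varphi} = (\varphi_0, \dots, \varphi_K)$ and $\boldsymbol{\lambda} = (\lambda^0, \dots, \lambda^K)$ as in Definition \ref{equivalence def}. Applying Item \ref{relation between f g omega and tilde(f) tilde(g) tilde(omega)} of Proposition \ref{object relations for equivalent networks} with $k = 0$ gives $\tilde{f}_0 = P_{\varphi_0} \Diag(\lambda^0) \circ f_0$. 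The normalisation constraints built into Definition \ref{equivalence def} force $\varphi_0 = id_{\llbracket 1, n_0 \rrbracket}$, hence $P_{\varphi_0} = \Id_{n_0}$, and $\lambda^0 = \One_{n_0}$, hence $\Diag(\lambda^0) = \Id_{n_0}$; therefore $\tilde{f}_0 = f_0$. Since $f_0 = f_{\mathbf{M},\mathbf{b}}$ and $\tilde{f}_0 = f_{\mathbf{\tilde{M}},\mathbf{\tilde{b}}}$, this is exactly the claimed equality. One could equivalently use $\tilde{g}_K = g_K \circ \Diag(\lambda^K)^{-1} P_{\varphi_K}^{-1}$ together with $\lambda^K = \One_{n_K}$, $\varphi_K = id_{\llbracket 1, n_K \rrbracket}$ and $g_K = f_{\mathbf{M},\mathbf{b}}$.

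If instead one wishes to avoid citing Proposition \ref{object relations for equivalent networks}, the self-contained route is as follows. First prove, for each $k \in \llbracket 0, K-1 \rrbracket$, the single-layer identity $\tilde{h}_k = P_{\varphi_k} \Diag(\lambda^k) \circ h_k \circ \Diag(\lambda^{k+1})^{-1} P_{\varphi_{k+1}}^{-1}$. The only non-formal ingredient is that a positive diagonal rescaling commutes with the componentwise ReLU, i.e. $\sigma(\lambda t) = \lambda\, \sigma(t)$ for $\lambda > 0$, which is where the positivity of the entries of $\lambda^k$ (from $\lambda^k \in (\mathbb{R}_+^*)^{n_k}$) enters; the case $k = 0$ is identical with $\sigma$ replaced by the identity. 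Then compose these identities along the layers $K-1, K-2, \dots, 0$, so that the inner factors $\Diag(\lambda^{k+1})^{-1} P_{\varphi_{k+1}}^{-1}$ and $P_{\varphi_{k+1}} \Diag(\lambda^{k+1})$ cancel telescopically, yielding $f_{\mathbf{\tilde{M}},\mathbf{\tilde{b}}} = P_{\varphi_0} \Diag(\lambda^0) \circ f_{\mathbf{M},\mathbf{b}} \circ \Diag(\lambda^K)^{-1} P_{\varphi_K}^{-1}$; the boundary conditions $\varphi_0 = \varphi_K = id$, $\lambda^0 = \One_{n_0}$, $\lambda^K = \One_{n_K}$ then finish the argument.

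The main (indeed only) obstacle is the telescoping-and-commutation bookkeeping together with the ReLU homogeneity identity $\sigma(\lambda t) = \lambda\, \sigma(t)$ for $\lambda > 0$ — and that is precisely what Proposition \ref{object relations for equivalent networks} already packages. Given that proposition, the corollary is a one-line specialisation at the input (or output) layer.
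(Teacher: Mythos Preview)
Your proposal is correct and follows essentially the same approach as the paper: specialise equation \eqref{expression of tilde(f)} from Proposition \ref{object relations for equivalent networks} at $k=0$ and use the boundary conditions $\varphi_0 = id_{\llbracket 1, n_0 \rrbracket}$, $\lambda^0 = \One_{n_0}$ to conclude. The alternative self-contained route you sketch is exactly the content of Item \ref{relation between h and tilde(h)} and the inductive proof of Item \ref{relation between f g omega and tilde(f) tilde(g) tilde(omega)} in that proposition, so it adds nothing new but is a faithful unpacking.
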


\begin{proof}

Consider $\boldsymbol{\varphi}$ and $\boldsymbol{\lambda}$ as in Definition \ref{equivalence def}. Looking at \eqref{expression of tilde(f)} for $k=0$, and using the fact that $f_0 = f_{\mathbf{M},\mathbf{b}}$ and $\tilde{f}_0 = f_{\mathbf{\tilde{M}}, \mathbf{\tilde{b}}}$, we obtain from Proposition \ref{object relations for equivalent networks}
\[f_{\mathbf{\tilde{M}}, \mathbf{\tilde{b}}} = P_{\varphi_0}  \Diag(\lambda^{0}) f_{\mathbf{M},\mathbf{b}} .\]

By definition of $\boldsymbol{\varphi}$ and $\boldsymbol{\lambda}$, we have $P_{\varphi_0} = \Id_{n_0}$ and $\lambda^{0} = \One_{n_0}$, so we can finally conclude:

\[f_{\mathbf{\tilde{M}}, \mathbf{\tilde{b}}} = f_{\mathbf{M},\mathbf{b}} .\]
\end{proof}

\begin{df}
We say that $(\mathbf{M},\mathbf{b})$ is normalized if for all $k \in \llbracket 1, K-1 \rrbracket$, for all $i \in \llbracket 1 , n_k \rrbracket $, we have:
\[\|{M}^{k}_{i,.}\|=1.\]
\end{df}

\begin{prop}\label{equivalent normalized network}
If $(\mathbf{M},\mathbf{b})$ satisfies, for all $k \in \llbracket 1, K-1 \rrbracket$, for all $i \in \llbracket 1 , n_k \rrbracket $, $M^{k}_{i,.} \neq 0$, then there exists an equivalent parameterization $(\mathbf{\tilde{M}}, \mathbf{\tilde{b}})$ that is normalized.
\end{prop}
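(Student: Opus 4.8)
The idea is to use only positive rescalings — taking all the permutations $\varphi_k$ to be the identity — and to choose the rescaling vectors $\boldsymbol{\lambda}$ by a downward induction on the layer index, starting from the value $\lambda^K = \One_{n_K}$ that Definition \ref{equivalence def} forces on us. Concretely, I would set $\lambda^K = \One_{n_K}$ and then, for $k$ running from $K-1$ down to $1$, assuming $\lambda^{k+1} \in (\mathbb{R}_+^*)^{n_{k+1}}$ has already been defined, I would set, for each $i \in \llbracket 1, n_k \rrbracket$,
\[
\lambda^k_i \;=\; \frac{1}{\left\| M^k_{i,.}\, \Diag(\lambda^{k+1})^{-1} \right\|}.
\]
This is well defined and strictly positive: since $M^k_{i,.} \neq 0$ by hypothesis and $\Diag(\lambda^{k+1})^{-1}$ is invertible (all entries of $\lambda^{k+1}$ being strictly positive), the row vector $M^k_{i,.}\, \Diag(\lambda^{k+1})^{-1}$ is nonzero, so its norm is positive. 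Finally I would set $\lambda^0 = \One_{n_0}$, which is allowed since no normalization constraint is imposed at the output layer.

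With $\varphi_k = id$ for all $k$ and these $\lambda^k$'s, the families $\boldsymbol{\varphi}$ and $\boldsymbol{\lambda}$ satisfy all the requirements of Definition \ref{equivalence def}, so defining $(\mathbf{\tilde M}, \mathbf{\tilde b})$ through \eqref{equivalence expression} yields a parameterization with $(\mathbf{M},\mathbf{b}) \sim (\mathbf{\tilde M}, \mathbf{\tilde b})$. It then remains only to check normalization. Fix $k \in \llbracket 1, K-1 \rrbracket$ and $i \in \llbracket 1, n_k \rrbracket$. Since the permutation matrices are the identity, \eqref{equivalence expression} gives $\tilde M^k_{i,.} = \lambda^k_i\, M^k_{i,.}\, \Diag(\lambda^{k+1})^{-1}$, hence
\[
\| \tilde M^k_{i,.} \| \;=\; \lambda^k_i \left\| M^k_{i,.}\, \Diag(\lambda^{k+1})^{-1} \right\| \;=\; 1
\]
by the very choice of $\lambda^k_i$. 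The biases $\tilde b^k = P_{\varphi_k}\Diag(\lambda^k) b^k$ play no role in this verification, since normalization only concerns the row norms of the weight matrices $M^k$.

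I do not expect any genuine obstacle here; the only point requiring a bit of care is the \emph{order} in which the $\lambda^k$'s are defined. The norm of the $i$-th row of $\tilde M^k$ depends not only on $\lambda^k_i$ but also on $\lambda^{k+1}$, through the column rescalings $\Diag(\lambda^{k+1})^{-1}$. Hence one cannot choose $\lambda^k$ freely layer by layer in increasing order; one must fix $\lambda^{k+1}$ first and then solve for $\lambda^k$, which is exactly why the induction runs from $k = K-1$ downward to $k=1$ (with the two endpoints $\lambda^K$ and $\lambda^0$ already pinned to $\One$). The hypothesis $M^k_{i,.} \neq 0$ is used, and is needed, precisely to guarantee that each $\lambda^k_i$ is finite and strictly positive.
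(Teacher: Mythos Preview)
Your proof is correct and follows essentially the same approach as the paper: both use identity permutations and define the rescaling vectors $\lambda^k$ by downward induction via $\lambda^k_i = 1/\|M^k_{i,.}\Diag(\lambda^{k+1})^{-1}\|$, then verify normalization directly. Your write-up is in fact slightly more explicit about why the hypothesis $M^k_{i,.}\neq 0$ is needed and why the induction must proceed downward.
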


\begin{proof}
We define recursively the family $(\lambda^{0}, \lambda^{1}, \dots, \lambda^{K}) \in (\mathbb{R_+^*})^{n_0} \times \dots \times( \mathbb{R_+^*})^{n_{K}}$ by:
\begin{itemize}
\item $\lambda^{K} = \One_{n_K}$;
\item for all $k \in \llbracket 1, K-1 \rrbracket$, for all $i \in \llbracket 1 , n_{k}\rrbracket$,
\[\lambda^{k}_i = \frac{1}{\|M^{k}_{i,.}\Diag(\lambda^{k+1})^{-1}\|};\]
\item $\lambda^{0}= \One_{n_0}$.
\end{itemize}

Consider the parameterization $(\mathbf{\tilde{M}}, \mathbf{\tilde{b}})$ defined by, for all $k \in \llbracket 0 , K-1 \rrbracket$:
\begin{equation*}\begin{cases}\tilde{M}^{k}= \Diag(\lambda^{k} )M^{k}\Diag(\lambda^{k+1})^{-1} \\
\tilde{b}^{k} = \Diag(\lambda^{k} ) b^{k}.\end{cases} \end{equation*}

The parameterization is, by definition, equivalent to $\mathbf{(M,b)}$, and, for all $k \in \llbracket 1 , K-1 \rrbracket$, for all $i \in \llbracket 1 , n_k \rrbracket$:
\begin{equation*}
\begin{aligned}
\|\tilde{M}^{k}_{i,.}\| &= \left\| \left[\Diag(\lambda^{k}) M^{k}\Diag(\lambda^{k+1})^{-1} \right]_{i,.} \right\|\\
& = \left\|\lambda^{k}_i M^{k}_{i,.}\Diag(\lambda^{k+1})^{-1} \right\|  \\
& = \left\|\frac{1}{\|M^{k}_{i,.}\Diag(\lambda^{k+1})^{-1}\|} M^{k}_{i,.}\Diag(\lambda^{k+1})^{-1} \right\| \\
&=1.
\end{aligned}
\end{equation*}
\end{proof}

\begin{prop}\label{permuted networks}
If $(\mathbf{M},\mathbf{b})$ and $(\mathbf{\tilde{M}}, \mathbf{\tilde{b}})$ are both normalized, then they are equivalent if and only if there exists a family of permutations $(\varphi_0, \dots , \varphi_{K}) \in \mathfrak{S}_{n_0} \times \dots \times \mathfrak{S}_{n_{K}}$, with $\varphi_0 = id_{\llbracket 1 , n_0 \rrbracket}$ and $\varphi_{K} = id_{\llbracket 1 , n_K \rrbracket}$, such that for all $k \in \llbracket 0 , K-1 \rrbracket$:
\begin{equation}\label{permuted networks equation}\begin{cases}\tilde{M}^{k}= P_{\varphi_{k}} M^{k}P_{\varphi_{k+1}}^{-1} \\
\tilde{b}^{k} = P_{\varphi_k} b^{k}.\end{cases} \end{equation}
\end{prop}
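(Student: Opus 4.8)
The plan is to prove the two implications separately, the converse one being immediate and the direct one resting on a short descending induction.

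\textbf{Converse implication.} If \eqref{permuted networks equation} holds for some family of permutations with $\varphi_0 = id_{\llbracket 1, n_0 \rrbracket}$ and $\varphi_K = id_{\llbracket 1, n_K \rrbracket}$, then one simply sets $\boldsymbol{\lambda} = (\One_{n_0}, \dots, \One_{n_K})$ in Definition \ref{equivalence def}. Since $\Diag(\One_{n_k}) = \Id_{n_k}$ and $\Diag(\One_{n_{k+1}})^{-1} = \Id_{n_{k+1}}$, the relation \eqref{equivalence expression} collapses exactly to \eqref{permuted networks equation}, so $(\mathbf{M},\mathbf{b}) \sim (\mathbf{\tilde{M}}, \mathbf{\tilde{b}})$.

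\textbf{Direct implication.} Suppose $(\mathbf{M},\mathbf{b}) \sim (\mathbf{\tilde{M}}, \mathbf{\tilde{b}})$ and fix families $\boldsymbol{\varphi}$ and $\boldsymbol{\lambda}$ as in Definition \ref{equivalence def}. The key claim is that, under the normalization hypothesis, $\lambda^{k} = \One_{n_k}$ for every $k \in \llbracket 0, K \rrbracket$; granted this, \eqref{equivalence expression} becomes \eqref{permuted networks equation} and we are done. I would establish the claim by descending induction on $k$. For $k = K$ we have $\lambda^{K} = \One_{n_K}$ by definition of $\sim$. Assume now $1 \le k \le K-1$ and $\lambda^{k+1} = \One_{n_{k+1}}$, so that $\Diag(\lambda^{k+1})^{-1} = \Id_{n_{k+1}}$. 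By the coordinatewise form of \eqref{equivalence expression}, for each $i \in \llbracket 1, n_k \rrbracket$ the $i$-th row of $\tilde{M}^k$ equals $\lambda^k_{\varphi_k^{-1}(i)}\, M^k_{\varphi_k^{-1}(i),.}\, P_{\varphi_{k+1}}^{-1}$. Since $P_{\varphi_{k+1}}^{-1}$ is orthogonal and $\lambda^k_{\varphi_k^{-1}(i)} > 0$, taking Euclidean norms gives $\|\tilde{M}^k_{i,.}\| = \lambda^k_{\varphi_k^{-1}(i)}\,\|M^k_{\varphi_k^{-1}(i),.}\|$. Both parameterizations being normalized, the left-hand side and the norm on the right-hand side equal $1$, hence $\lambda^k_{\varphi_k^{-1}(i)} = 1$; as $i$ ranges over $\llbracket 1, n_k \rrbracket$ and $\varphi_k$ is a bijection, $\lambda^k = \One_{n_k}$. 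Finally $\lambda^0 = \One_{n_0}$ holds by definition of $\sim$, which completes the induction.

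There is essentially no serious obstacle: the argument uses only orthogonality of permutation matrices and the multiplicativity of the row-scaling $\Diag(\lambda^k)$, once one observes that normalization lets one read off $\lambda^k$ from the row norms after the deeper scalings have been trivialized. The one point requiring care is the direction of the induction — one must eliminate the $\lambda^{k+1}$ factor before controlling $\lambda^k$, so the induction has to run from the input layer $k = K$ downward, with $\lambda^K = \One_{n_K}$ as base case, and the normalization step is applied only for $k \in \llbracket 1, K-1 \rrbracket$ (the case $k = 0$ being handled directly by the constraint $\lambda^0 = \One_{n_0}$ built into Definition \ref{equivalence def}).
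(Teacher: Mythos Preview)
Your proof is correct and follows essentially the same approach as the paper: both prove the nontrivial direction by a descending induction showing $\lambda^{k}=\One_{n_k}$ for all $k$, using the base case $\lambda^{K}=\One_{n_K}$, the orthogonality of $P_{\varphi_{k+1}}^{-1}$ together with the row-norm normalization for the inductive step on the hidden layers, and the constraint $\lambda^{0}=\One_{n_0}$ to finish. The converse is handled identically by specializing $\boldsymbol{\lambda}$ to the all-ones vectors.
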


\begin{proof}

Assume $(\mathbf{M},\mathbf{b})$ and $(\mathbf{\tilde{M}}, \mathbf{\tilde{b}})$ are equivalent. Then there exist a family of permutations $(\varphi_0, \dots , \varphi_{K})  \in \mathfrak{S}_{n_0} \times \dots \times \mathfrak{S}_{n_{K}}$ and a family $(\lambda^{0},\dots, \lambda^{K}) \in (\mathbb{R_+^*})^{n_0} \times \dots \times( \mathbb{R_+^*})^{n_{K}}$ as in Definition \ref{equivalence def}.

Let us prove by induction that $\lambda^{k} = \One_{n_k}$ for all $ k \in \llbracket 0, K \rrbracket$.

For $k=K$ it is true by Definition \ref{equivalence def}.

Let $k \in \llbracket 1, K-1 \rrbracket$, and suppose $\lambda^{k+1} = \One_{n_{k+1}}$. This means $\Diag(\lambda^{k+1})= \Id_{n_{k+1}}$. Let $i \in \llbracket 1 , n_k \rrbracket$. Since $\mathbf{(M,b)}$ is normalized, $\|M^{k}_{i,.} \|=1$. Since $P_{\varphi_{k+1}}^{-1}$ is a permutation matrix, it is orthogonal so $\|M^{k}_{i,.} P_{\varphi_{k+1}}^{-1} \| = \|M^{k}_{i,.} \| = 1$. Recalling \eqref{equivalence expression} and using the fact that $\mathbf{(\tilde{M}, \tilde{b})}$ is normalized, that $\Diag(\lambda^{k+1})= \Id_{n_{k+1}}$ and that $\lambda^{k}_i$ is positive, we have:
\begin{equation*}
\begin{aligned}
1 = \|\tilde{M}^{k}_{\varphi_k(i),.} \| & = \|\lambda^{k}_i M^{k}_{i,.} \Diag(\lambda^{k+1})^{-1} P_{\varphi_{k+1}}^{-1} \| \\
& = \lambda^{k}_i \|M^{k}_{i,.} P_{\varphi_{k+1}}^{-1} \|\\
& = \lambda^{k}_i.
\end{aligned}
\end{equation*}
This shows $\lambda^{k} = \One_{n_k}$.

The case $k=0$ is also true by Definition \ref{equivalence def}.

Equation \eqref{equivalence expression} with $\lambda^{k} = \One_{n_k}$ for all $k \in \llbracket 0, K\rrbracket$ is precisely equation \eqref{permuted networks equation}.

The reciprocal is clear: \eqref{permuted networks equation} is a particular case of \eqref{equivalence expression} with $\lambda^{k} = \One_{n_k}$.
\end{proof}

\section{Main theorem}\label{Appendix-B}

In Appendix \ref{Appendix-B}, we prove the main theorem using the notations and results of Appendix \ref{Appendix-A}, and admitting Lemma \ref{Fundamental lemma}, which is proven in Appendix \ref{proof of the lemma}.

More precisely, we begin by stating the conditions $\Cbf$ and $\Pbf$ in Section \ref{Conditions-sec}, we then state our main result, which is Theorem \ref{main theorem}, in Section \ref{main theorem-appendix sec}, and we give a consequence of this result in terms of risk minimization, which is Corollary \ref{risk minimization formulation}, in Section \ref{Risk minimization-sec}. Finally we prove Theorem \ref{main theorem} and Corollary \ref{risk minimization formulation} in Sections \ref{Proof of the main theorem-apdx} and \ref{Proof of the corollary-sec} respectively.

\subsection{Conditions}\label{Conditions-sec}

Assume $g : \mathbb{R}^m \rightarrow \mathbb{R}^n$ is a continuous piecewise linear function, $\Pi$ is a set of closed polyhedra admissible with respect to $g$, and let $\Omega \subset \mathbb{R}^l$, $M \in \mathbb{R}^{m \times l}$ and $b \in \mathbb{R}^{m}$.

We define
\[\fonction{h}{\mathbb{R}^l}{\mathbb{R}^m}{x}{\sigma(Mx + b)}\]
and
\[\fonction{h^{lin}}{\mathbb{R}^l}{\mathbb{R}^m}{x}{Mx + b.}\]

\begin{df}
For all $i \in \llbracket 1 , m \rrbracket$, we denote $E_i = \{x \in \mathbb{R}^m, \ x_i = 0 \}$.
\end{df}

\begin{df}\label{def v_i} Let $D \in \Pi$.  The function $g$ coincides with a linear function on $D$. Since the interior of $D$ is nonempty, we define $V(D) \in \mathbb{R}^{n \times m}$ and $c(D) \in \mathbb{R}^n$ as the unique couple satisfying, for all $x \in D$:
\[g(x) = V(D)x + c(D).\]
\end{df}

\begin{df}
We say that $(g,M,b,\Omega,\Pi)$ satisfies the conditions $\mathbf{C}$ iif:
\begin{itemize}
\item[$\mathbf{C}.a)$] $M$ is full row rank;
\item[$\mathbf{C}.b)$] for all $i \in \llbracket 1 , m \rrbracket$, there exists $x \in \ring{\Omega}$ such that
\[M_{i,.}x + b_i = 0,\]
or equivalently, \[E_i \cap h^{lin}(\ring{\Omega}) \neq \emptyset;\]
\item[$\mathbf{C}.c)$] for all $D \in \Pi$, for all $i \in \llbracket 1 , m \rrbracket$, if $E_i \cap D \cap h(\Omega) \neq \emptyset$ then $V_{.,i}(D) \neq 0$;
\item[$\mathbf{C}.d)$] for any affine hyperplane $H \subset \mathbb{R}^{l}$,
\[H \cap \ring{\Omega} \ \not\subset \bigcup_{D \in \Pi} \partial h^{-1}(D).\]
\end{itemize}
\end{df}

\begin{df}
For all $k \in \llbracket 1, K-1 \rrbracket $, for all $i \in \llbracket 1 , n_k \rrbracket$, we denote $E_i^k = \{x \in \mathbb{R}^{n_k}, x_i = 0 \}$.
\end{df}

We now state the conditions $\Pbf$ (already stated in the main text in Definition \ref{Conditions P-main}).

\begin{df}\label{Conditions P}
We say that $(\mathbf{M},\mathbf{b},\Omega, \boldsymbol{\Pi})$ satisfies the conditions $\mathbf{P}$ iif for all $k \in \llbracket 1 , K-1 \rrbracket$, $ (g_k, M^{k}, b^{k}, \Omega_{k+1}, \Pi_k)$ satisfies the conditions $\mathbf{C}$. 

Explicitly, for all $k \in \llbracket 1, K-1 \rrbracket$, the conditions are the following:
\begin{itemize}
\item[$\mathbf{P}.a)$] $M^k$ is full row rank;
\item[$\mathbf{P}.b)$] for all $i \in \llbracket 1 , n_k \rrbracket$, there exists $x \in \ring{\Omega}_{k+1}$ such that
\[M^k_{i,.}x + b^k_i = 0,\]
or equivalently
\[E_i^k \cap h_k^{lin}(\ring{\Omega}_{k+1}) \neq \emptyset;\]
\item[$\mathbf{P}.c)$] for all $D \in \Pi_k$, for all $i \in \llbracket 1 , n_k \rrbracket$, if $E^k_i \cap D \cap \Omega_{k} \neq \emptyset$ then $V^k_{.,i}(D) \neq 0$;
\item[$\mathbf{P}.d)$] for any affine hyperplane $H \subset \mathbb{R}^{n_{k+1}}$,
\[H \cap \ring{\Omega}_{k+1} \ \not\subset \bigcup_{D \in \Pi_k} \partial h_k^{-1}(D).\]
\end{itemize}
\end{df}

\begin{remnum}\label{omega has nonempty interior}
The condition $\Pbf.b)$ implies that for all $k \in \llbracket 1, K-1 \rrbracket$, $\ring{\Omega}_{k+1} \neq \emptyset$, and in particular for $k = K-1$, the set $\Omega = \Omega_K$ has nonempty interior.
\end{remnum}

The following proposition shows that the conditions $\mathbf{P}$ are stable modulo permutation and positive rescaling, as defined in Definition \ref{equivalence def}.

\begin{prop}\label{conditions P are stable by equivalence}
Suppose $(\mathbf{M}, \mathbf{b})$ and $(\mathbf{\tilde{M}}, \mathbf{\tilde{b}})$ are two equivalent network parameterizations, and suppose $(\mathbf{M}, \mathbf{b}, \Omega, \boldsymbol{\Pi})$ satisfies the conditions $\mathbf{P}$. Then, if we define $\boldsymbol{\tilde{\Pi}}$ as in Item \ref{relations between polyhedra} of Proposition \ref{object relations for equivalent networks}, $(\mathbf{\tilde{M}}, \mathbf{\tilde{b}}, \Omega, \boldsymbol{\tilde{\Pi}})$ satisfies the conditions $\mathbf{P}$.
\end{prop}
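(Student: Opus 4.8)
The plan is to fix, using $(\Mbf,\bbf)\sim(\Mbft,\bbft)$, families $\boldsymbol{\varphi}=(\varphi_0,\dots,\varphi_K)$ and $\boldsymbol{\lambda}=(\lambda^0,\dots,\lambda^K)$ as in Definition~\ref{equivalence def}, to take $\boldsymbol{\tilde\Pi}$ to be the admissible list attached to them by Item~\ref{relations between polyhedra} of Proposition~\ref{object relations for equivalent networks} (so that in particular each $\tilde V^k(\tilde D)$ appearing in condition $\mathbf{P}.c)$ is well defined, since $\tilde\Pi_k$ is admissible with respect to $\tilde g_k$), and then to verify $\mathbf{P}.a)$--$\mathbf{P}.d)$ for $(\Mbft,\bbft,\Omega,\boldsymbol{\tilde\Pi})$ level by level for $k\in\llbracket 1,K-1\rrbracket$. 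The only ingredient beyond Proposition~\ref{object relations for equivalent networks} is the observation that the linear maps occurring there, $L_k:=P_{\varphi_k}\Diag(\lambda^k)$ and $A_k:=\Diag(\lambda^{k+1})^{-1}P_{\varphi_{k+1}}^{-1}$, are invertible, hence homeomorphisms: they send interiors onto interiors, topological boundaries onto topological boundaries, affine hyperplanes onto affine hyperplanes, and preserve inclusions, intersections and (non)emptiness of sets. Each condition for the tilde objects at level $k$ is then transported, via these maps, to the corresponding condition for $(g_k,M^k,b^k,\Omega_{k+1},\Pi_k)$, which holds by assumption.

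Concretely, for $\mathbf{P}.a)$ I would use $\tilde M^k=L_k M^k A_k$: as $M^k$ is multiplied on both sides by square invertible matrices, $\tilde M^k$ has the same rank as $M^k$, namely $n_k$. For $\mathbf{P}.b)$ I would note that $\{x\in\ring{\tilde\Omega}_{k+1}:\tilde M^k_{i,.}x+\tilde b^k_i=0\}=\tilde H^{k+1}_i\cap\ring{\tilde\Omega}_{k+1}$, which by Proposition~\ref{object relations for equivalent networks} equals $A_k^{-1}\bigl(H^{k+1}_{\varphi_k^{-1}(i)}\cap\ring{\Omega}_{k+1}\bigr)$, and this is nonempty because $\mathbf{P}.b)$ for $(\Mbf,\bbf)$ holds at index $\varphi_k^{-1}(i)$ (and $\varphi_k^{-1}(i)$ runs over all of $\llbracket 1,n_k\rrbracket$).

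For $\mathbf{P}.c)$ I would first compute, from $\tilde g_k=g_k\circ L_k^{-1}$, that $\tilde V^k(\tilde D)=V^k(D)L_k^{-1}$ whenever $\tilde D=L_k D$ with $D\in\Pi_k$, so that $\tilde V^k_{.,i}(\tilde D)=\tfrac{1}{\lambda^k_{\varphi_k^{-1}(i)}}\,V^k_{.,\varphi_k^{-1}(i)}(D)$, which vanishes iff $V^k_{.,\varphi_k^{-1}(i)}(D)$ does, since $\lambda^k_{\varphi_k^{-1}(i)}>0$; and, using $E^k_i=L_k E^k_{\varphi_k^{-1}(i)}$ and $\tilde\Omega_k=L_k\Omega_k$, that $E^k_i\cap\tilde D\cap\tilde\Omega_k=L_k\bigl(E^k_{\varphi_k^{-1}(i)}\cap D\cap\Omega_k\bigr)$; combining these, the hypothesis of $\mathbf{P}.c)$ for $(\tilde D,i)$ is equivalent to the hypothesis of $\mathbf{P}.c)$ for $(\Mbf,\bbf)$ at $(D,\varphi_k^{-1}(i))$, whence the conclusion. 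For $\mathbf{P}.d)$ I would use $\tilde h_k=L_k\circ h_k\circ A_k$ to get $\tilde h_k^{-1}(\tilde D)=A_k^{-1}\bigl(h_k^{-1}(D)\bigr)$, hence $\partial\tilde h_k^{-1}(\tilde D)=A_k^{-1}\bigl(\partial h_k^{-1}(D)\bigr)$ and $\bigcup_{\tilde D\in\tilde\Pi_k}\partial\tilde h_k^{-1}(\tilde D)=A_k^{-1}\bigl(\bigcup_{D\in\Pi_k}\partial h_k^{-1}(D)\bigr)$; together with $\ring{\tilde\Omega}_{k+1}=A_k^{-1}(\ring{\Omega}_{k+1})$ and the bijectivity of $A_k$, an inclusion $H\cap\ring{\tilde\Omega}_{k+1}\subset\bigcup_{\tilde D\in\tilde\Pi_k}\partial\tilde h_k^{-1}(\tilde D)$ would become, after applying $A_k$, $A_k(H)\cap\ring{\Omega}_{k+1}\subset\bigcup_{D\in\Pi_k}\partial h_k^{-1}(D)$ with $A_k(H)$ again an affine hyperplane, contradicting $\mathbf{P}.d)$ for $(\Mbf,\bbf)$.

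I expect the main obstacle to be purely clerical bookkeeping: carrying the index permutation $\varphi_k$ correctly through $\mathbf{P}.b)$ and $\mathbf{P}.c)$ (the $i$-th coordinate, or column, of the rescaled-and-permuted network corresponds to the $\varphi_k^{-1}(i)$-th one of the original), and checking that the positive factors $\lambda^k_j$ never interfere with the ``nonzero'' and ``nonempty'' statements. No genuinely new idea is required beyond the relations already recorded in Proposition~\ref{object relations for equivalent networks}.
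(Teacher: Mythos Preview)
Your proposal is correct and follows essentially the same approach as the paper's own proof: fix $\boldsymbol{\varphi}$ and $\boldsymbol{\lambda}$, use the relations from Proposition~\ref{object relations for equivalent networks}, and transport each of $\mathbf{P}.a)$--$\mathbf{P}.d)$ at level $k$ through the invertible linear maps $L_k=P_{\varphi_k}\Diag(\lambda^k)$ and $A_k=\Diag(\lambda^{k+1})^{-1}P_{\varphi_{k+1}}^{-1}$. Your systematic use of the shorthand $L_k,A_k$ and the set-level arguments (e.g.\ $E^k_i\cap\tilde D\cap\tilde\Omega_k=L_k\bigl(E^k_{\varphi_k^{-1}(i)}\cap D\cap\Omega_k\bigr)$ for $\mathbf{P}.c)$) is slightly cleaner than the paper's elementwise computations, but the content is identical.
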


\begin{proof}
Since $(\mathbf{M}, \mathbf{b})$ and $(\mathbf{\tilde{M}}, \mathbf{\tilde{b}})$ are equivalent, by Definition \ref{equivalence def} there exist 

\begin{itemize}
\item a family of permutations $(\varphi_0, \dots , \varphi_{K}) \in \mathfrak{S}_{n_0} \times \dots \times \mathfrak{S}_{n_{K}}$, with $\varphi_0 = id_{\llbracket 1 , n_0 \rrbracket}$ and $\varphi_{K} = id_{\llbracket 1 , n_K \rrbracket}$,
\item a family $(\lambda^{0}, \lambda^{1}, \dots, \lambda^{K}) \in (\mathbb{R_+^*})^{n_0} \times \dots \times( \mathbb{R_+^*})^{n_{K}}$, with $\lambda^{0} = \One_{n_0}$ and $\lambda^{K} = \One_{n_K}$,
\end{itemize}
such that
\begin{equation}\label{eq proof 26}
\begin{cases}\tilde{M}^{k}= P_{\varphi_{k}} \Diag(\lambda^{k} )M^{k}\Diag(\lambda^{k+1})^{-1}P_{\varphi_{k+1}}^{-1} \\
\tilde{b}^{k} = P_{\varphi_k}\Diag(\lambda^{k} ) b^{k}.\end{cases} \end{equation}

Let $k \in \llbracket 1, K-1 \rrbracket$. We know the conditions $\mathbf{P}.a) - \mathbf{P}.d)$ are satisfied by  $(g_k, M^{k}, b^{k}, \Omega_{k+1}, \Pi_k)$, let us show they are satisfied by $(\tilde{g}_k, \tilde{M}^{k}, \tilde{b}^{k}, \tilde{\Omega}_{k+1}, \tilde{\Pi}_k)$.

\begin{itemize}
\item[$\mathbf{P}.a)$] Since $M^{k}$ satisfies $\Pbf.a)$, it is full row rank, and using \eqref{eq proof 26} and the fact that the matrices $P_{\varphi_{k}}, \Diag(\lambda^{k} ), \Diag(\lambda^{k+1})^{-1}$ and $P_{\varphi_{k+1}}^{-1}$ are invertible, we see that $\tilde{M}^k$ is full row rank.

\item[$\mathbf{P}.b)$] Let $i \in \llbracket 1, n_k \rrbracket$.
Since $ (g_k, M^{k}, b^{k}, \Omega_{k+1}, \Pi_k)$ satisfies the condition $\mathbf{P}.b)$, we can choose $x \in \ring{\Omega}_{k+1}$ such that 
\begin{equation}\label{eq proof 26.2}
M^k_{\varphi_k^{-1}(i),.}x + b^k_{\varphi_k^{-1}(i)} = 0.
\end{equation}
 Recall from Proposition \ref{object relations for equivalent networks} that 
\[\tilde{\Omega}_{k+1} =  P_{\varphi_{k+1}} \Diag(\lambda^{k+1}) \Omega_{k+1}.\]
 Since $P_{\varphi_{k+1}} \Diag(\lambda^{k+1})$ is an invertible matrix, it induces an homeomorphism on $\mathbb{R}^{n_{k+1}}$, and thus this identity also holds for the interiors:
\[\ring{\tilde{\Omega}}_{k+1} = P_{\varphi_{k+1}} \Diag(\lambda^{k+1}) \ring{\Omega}_{k+1}.\]
 Given that $x \in \ring{\Omega}_{k+1}$, defining $y = P_{\varphi_{k+1}} \Diag(\lambda^{k+1})x$, we have $y \in \ring{\tilde{\Omega}}_{k+1}$.
 
Using \eqref{eq proof 26}, \eqref{permutation matrix} and \eqref{eq proof 26.2}, we have
\begin{equation*}
\begin{aligned}
\tilde{M}^k_{i,.}y + \tilde{b}^k_i & = [P_{\varphi_{k}} \Diag(\lambda^{k} ) M^k\Diag(\lambda^{k+1})^{-1}P_{\varphi_{k+1}}^{-1}]_{i,.}  y + [P_{\varphi_{k}} \Diag(\lambda^{k} )b^k]_i \\
& = [\Diag(\lambda^{k} ) M^k\Diag(\lambda^{k+1})^{-1}P_{\varphi_{k+1}}^{-1}]_{\varphi_k^{-1}(i),.}  y + [ \Diag(\lambda^{k} )b^k]_{\varphi_k^{-1}(i)} \\
& = \lambda^k_{\varphi_k^{-1}(i)}M^k_{\varphi_k^{-1}(i),.}\Diag(\lambda^{k+1})^{-1}P_{\varphi_{k+1}}^{-1}  y +  \lambda^{k}_{\varphi_{k}^{-1}(i)}b^k_{\varphi_{k}^{-1}(i)} \\
& = \lambda^k_{\varphi_k^{-1}(i)}M^k_{\varphi_k^{-1}(i),.}x +  \lambda^{k}_{\varphi_{k}^{-1}(i)}b^k_{\varphi_{k}^{-1}(i)} \\
& = 0.
\end{aligned}
\end{equation*}

We showed that there exists $y \in \ring{\tilde{\Omega}}_{k+1}$ such that
\[\tilde{M}^k_{i,.}y + \tilde{b}^k_i = 0,\]
which concludes the proof of $\Pbf.b)$.

\item[$\mathbf{P}.c)$]Let $\tilde{D} \in \tilde{\Pi}_{k}$ and $i \in \llbracket 1 , n_k \rrbracket$. Suppose $E_i^k \cap \tilde{D} \cap \tilde{h}_k(\tilde{\Omega}_{k+1})\neq \emptyset$, and let us show $\tilde{V}^k_{i,.} (\tilde{D}) \neq 0$.

Let $x \in \tilde{\Omega}_{k+1}$ such that $\tilde{h}_k(x) \in E_i^k \cap \tilde{D}$. Inverting the equalities of Proposition \ref{object relations for equivalent networks} we get
\begin{itemize}
\item[•] $h_{k} =  \Diag(\lambda^{k})^{-1}P_{\varphi_{k}}^{-1} \tilde{h}_{k} \circ P_{\varphi_{k+1}} \Diag(\lambda^{k+1})$,
\item[•] $H^{k+1}_{\varphi_{k}^{-1}(i)} = \Diag(\lambda^{k+1})^{-1} P_{\varphi_{k+1}}^{-1} \tilde{H}_{i}^{k+1}$, 
\item[•] $\Omega_{k+1} =  \Diag(\lambda^{k+1})^{-1} P_{\varphi_{k+1}}^{-1} \tilde{\Omega}_{k+1}$.
\end{itemize}

Denote $D = \Diag(\lambda^{k})^{-1} P_{\varphi_{k}}^{-1} \tilde{D}$. Since $\tilde{\Pi}_k$ has been defined as in Item \ref{relations between polyhedra} of Proposition \ref{object relations for equivalent networks}, we know that $D \in \Pi_k$. Let $y = \Diag(\lambda^{k+1})^{-1} P_{\varphi_{k+1}}^{-1}x$. Let us prove that $h_k(y) \in E_{\varphi_k(i)^{-1}}^{k} \cap D \cap h_k(\Omega_{k+1})$. 

Since $x \in \tilde{\Omega}_{k+1}$, we see that $y \in \Omega_{k+1}$, so $h_k(y) \in h_k(\Omega_{k+1})$.

We also have
\begin{equation*}
\begin{aligned}
h_k(y) & = \Diag(\lambda^{k})^{-1}P_{\varphi_{k}}^{-1} \tilde{h}_{k} \circ P_{\varphi_{k+1}} \Diag(\lambda^{k+1})\left( \Diag(\lambda^{k+1})^{-1} P_{\varphi_{k+1}}^{-1} x \right) \\
& = \Diag(\lambda^{k})^{-1}P_{\varphi_{k}}^{-1} \tilde{h}_{k}\left( x\right),
\end{aligned}
\end{equation*}
which shows, since $\tilde{h}_k(x) \in \tilde{D}$, that $h_k(y) \in D$.

Since, by hypothesis, $\tilde{h}_k(x) \in E_i^k$, using \eqref{permutation matrix} and \eqref{Inverse of a permutation matrix}, we have
\begin{equation*}
\begin{aligned}
\left[h_k(y)\right]_{\varphi_k^{-1}(i)} & = \left[\Diag(\lambda^{k})^{-1}P_{\varphi_{k}}^{-1} \tilde{h}_{k}\left( x\right) \right]_{\varphi_k^{-1}(i)} \\
& = \frac{1}{\lambda^{k}_{\varphi_k^{-1}(i)}} \left[P_{\varphi_{k}}^{-1} \tilde{h}_{k}(x)\right]_{\varphi_k^{-1}(i)} \\
&  = \frac{1}{\lambda^{k}_{\varphi_k^{-1}(i)}} (\tilde{h}_k(x))_i \\
& = 0.
\end{aligned}
\end{equation*}
This proves that $h_k(y) \in E_{\varphi_k^{-1}(i)}^k$.

We proved that 
\[h_k(y) \in E_{\varphi_k^{-1}(i)}^k \cap D \cap h_k(\Omega_{k+1}),\] which shows this intersection is not empty. Since $(g_k, M^{k}, b^{k}, \Omega_{k+1}, \Pi_k)$ satisfies $\mathbf{P}.c)$, we have $V_{.,\varphi_k^{-1}(i)}^k(D) \neq 0$.

Since, according to proposition \ref{object relations for equivalent networks},
\[\tilde{g}_k = g_k \circ \Diag(\lambda^{k})^{-1} P_{\varphi_{k}}^{-1},\]
we deduce:
\begin{equation}\label{eq proof 26.3}
\tilde{V}^k(\tilde{D}) = V^k(D) \Diag(\lambda^k)^{-1} P_{\varphi_k}^{-1}.
\end{equation}
For a matrix $A$ and a permutation $\varphi$, we have $[P_{\varphi} A ]_{i,.} = A_{\varphi^{-1}(i),.}$, so by taking the transpose, we see that $[A^T P_{\varphi}^{-1}]_{.,i} = (A^T)_{.,\varphi^{-1}(i)}$.

Taking the $i^{\text{th}}$ column of \eqref{eq proof 26.3}, we thus obtain
\[\tilde{V}_{.,i}^k (\tilde{D}) = \left[V^k(D) \Diag(\lambda^k)^{-1} P_{\varphi_k}^{-1}\right]_{.,i} = \frac{1}{\lambda^{k}_{\varphi_k^{-1}(i)}}V_{.,\varphi_k^{-1}(i)}^k(D),\]
which shows that $\tilde{V}_{.,i}^k(\tilde{D}) \neq 0$.

\item[$\mathbf{P}.d)$] Let $\tilde{H} \subset \mathbb{R}^{n_{k+1}}$ be an affine hyperplane. Denote $H =  \Diag(\lambda^{k+1})^{-1} P_{\varphi_{k+1}}^{-1}  \tilde{H}$. Since $\mathbf{P}.d)$ holds for $(g_k, M^{k}, b^{k}, \Omega_{k+1}, \Pi_k)$, using Item \ref{relation between f g omega and tilde(f) tilde(g) tilde(omega)} of Proposition \ref{object relations for equivalent networks}, we have
\begin{eqnarray}\label{tilde(H) cap tilde(Omega) not included in Bigcup PDh^(-1)(D)}
\tilde{H} \cap \ring{\tilde{\Omega}}_{k+1} & = & P_{\varphi_{k+1}} \Diag(\lambda^{k+1})  \left(H \cap \ring{\Omega}_{k+1}\right) \nonumber \\& \not\subset & P_{\varphi_{k+1}} \Diag(\lambda^{k+1})  \bigcup_{D \in \Pi_k} \partial h_{k}^{-1}(D) \nonumber \\
& = &  \bigcup_{D \in \Pi_k} P_{\varphi_{k+1}} \Diag(\lambda^{k+1})  \partial h_{k}^{-1}(D).
\end{eqnarray}
For all $k$, $P_{\varphi_{k+1}} \Diag(\lambda^{k+1})$ is an invertible matrix, so it induces an homeomorphism of $\mathbb{R}^{n_{k+1}}$. We thus have
\begin{equation}\label{boundary preserved by homeomorphism}
 P_{\varphi_{k+1}} \Diag(\lambda^{k+1})  \partial h_{k}^{-1}(D) = \partial \left( P_{\varphi_{k+1}} \Diag(\lambda^{k+1})  h_{k}^{-1}(D)\right).
 \end{equation}
Furthermore, by Item \ref{relation between h and tilde(h)} of Proposition \ref{object relations for equivalent networks}, we have  $\tilde{h}_k = P_{\varphi_k} \Diag(\lambda^k) h_k \circ \Diag(\lambda^{k+1})^{-1} P_{\varphi_{k+1}}^{-1}$, so 
\[\tilde{h}_k^{-1} =   P_{\varphi_{k+1}} \Diag(\lambda^{k+1}) h_k^{-1} \circ  \Diag(\lambda^k)^{-1} P_{\varphi_k}^{-1}, \]
and since $\tilde{D} = P_{\varphi_k} \Diag(\lambda^k) D$,
\begin{equation}\label{equality h_k^-1(D)}
\tilde{h}_k^{-1} (\tilde{D}) = P_{\varphi_{k+1}} \Diag(\lambda^{k+1}) h_k^{-1}(D). 
\end{equation}
Combining \eqref{boundary preserved by homeomorphism} and \eqref{equality h_k^-1(D)}, we obtain
\[P_{\varphi_{k+1}} \Diag(\lambda^{k+1})  \partial h_{k}^{-1}(D) = \partial \tilde{h}_k^{-1}(\tilde{D}),\]
and we can thus reformulate \eqref{tilde(H) cap tilde(Omega) not included in Bigcup PDh^(-1)(D)} as
\begin{eqnarray} \tilde{H} \cap \ring{\tilde{\Omega}}_{k+1}
& \not\subset & \bigcup_{\tilde{D} \in \tilde{\Pi}_k} \partial \tilde{h}_{k}^{-1}(\tilde{D}). \nonumber 
 \end{eqnarray}

\end{itemize}
\end{proof}

\subsection{Identifiability statement}\label{main theorem-appendix sec}

We restate here the main theorem, already stated as Theorem \ref{main theorem-main} in the main part of the article.

\begin{thm}\label{main theorem}
Let $K \in \mathbb{N}$, $K \geq 2$. Suppose we are given two networks with $K$ layers, identical number of neurons per layer, and with respective parameters $(\mathbf{M}, \mathbf{b})$ and $(\mathbf{\tilde{M}}, \mathbf{\tilde{b}})$. Assume $\boldsymbol{\Pi}$ and $\boldsymbol{\tilde{\Pi}}$ are two lists of sets of closed polyhedra that are admissible with respect to $(\mathbf{M}, \mathbf{b})$ and $(\mathbf{\tilde{M}}, \mathbf{\tilde{b}})$ respectively. Denote by $n_K$ the number of neurons of the input layer, and suppose we are given a set $\Omega \subset \mathbb{R}^{n_K}$ such that $(\mathbf{M}, \mathbf{b},\Omega,\boldsymbol{\Pi})$ and $(\mathbf{\tilde{M}}, \mathbf{\tilde{b}},\Omega, \boldsymbol{\tilde{\Pi}})$ satisfy the conditions $\mathbf{\mathbf{P}}$, and such that, for all $x \in \Omega$:
 \[f_{\mathbf{M},\mathbf{b}}(x) = f_{\tilde{\mathbf{M}}, \tilde{\mathbf{b}}}(x).\]
 
Then:
\[(\mathbf{M},\mathbf{b}) \sim (\mathbf{\tilde{M}}, \mathbf{\tilde{b}}).\]
\end{thm}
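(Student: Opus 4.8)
\emph{Proof plan.} The plan is to reduce to \emph{normalised} parameterisations, and then argue by induction on $K$, peeling off the first layer with the help of Lemma \ref{Fundamental lemma-main}. By $\Pbf.a)$ no row of $M^k$ or $\tilde M^k$ vanishes for $k\in\lb 1,K-1\rb$, so Proposition \ref{equivalent normalized network} provides equivalent normalised parameterisations; by Proposition \ref{conditions P are stable by equivalence-main} these still satisfy $\Pbf$ (for suitable admissible lists) and by Proposition \ref{equivalent networks have same function-main} they still coincide with each other on $\Omega$. Since $\sim$ is an equivalence relation, it suffices to treat the normalised case, so assume henceforth $\|M^k_{i,.}\|=\|\tilde M^k_{i,.}\|=1$ for all $k\in\lb 1,K-1\rb$ and $i\in\lb 1,n_k\rb$. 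The basic step, valid for any $K\ge 2$, is: write $f_{\Mbf,\bbf}=g_{K-1}\circ h_{K-1}$ and $f_{\Mbft,\bbft}=\tilde g_{K-1}\circ\tilde h_{K-1}$; the case $k=K-1$ of $\Pbf$ says exactly that $(g_{K-1},M^{K-1},b^{K-1},\Omega,\Pi_{K-1})$ and $(\tilde g_{K-1},\tilde M^{K-1},\tilde b^{K-1},\Omega,\tilde\Pi_{K-1})$ satisfy the conditions $\Cbf$, and the relevant rows are unit, so Lemma \ref{Fundamental lemma-main} yields $\varphi_{K-1}\in\mathfrak S_{n_{K-1}}$ with
\begin{equation}\label{plan-peel}
\tilde M^{K-1}=P_{\varphi_{K-1}}M^{K-1},\qquad \tilde b^{K-1}=P_{\varphi_{K-1}}b^{K-1},
\end{equation}
and such that $g_{K-1}$ and $y\mapsto\tilde g_{K-1}(P_{\varphi_{K-1}}y)$ agree on $h_{K-1}(\Omega)=\Omega_{K-1}$; from \eqref{plan-peel} one also gets $\tilde h_{K-1}=P_{\varphi_{K-1}}\circ h_{K-1}$, hence $\tilde\Omega_{K-1}=P_{\varphi_{K-1}}\Omega_{K-1}$.

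\textbf{Base case $K=2$.} Here $g_1,\tilde g_1$ are affine and the peeling step gives $M^0y+b^0=\tilde M^0P_{\varphi_1}y+\tilde b^0$ for all $y\in\Omega_1$. I claim $\Omega_1$ lies in no affine hyperplane of $\RR^{n_1}$. Indeed, if $a^Ty=c$ on $\Omega_1$ with $a\ne 0$, then the continuous piecewise linear map $\psi:=a^Th_1(\cdot)$ equals the constant $c$ on $\Omega_2\supseteq\ring\Omega_2$; fixing $i$, by $\Pbf.b)$ the hyperplane $\{M^1_{i,.}x+b^1_i=0\}$ meets $\ring\Omega_2$, and since by $\Pbf.a)$ the rows of $M^1$ are independent it is distinct from the other such hyperplanes, so at a point $p$ of this intersection lying on no other one, a small ball inside $\ring\Omega_2$ is split into two halves on which the affine expressions of $\psi$ differ by $a_iM^1_{i,.}$; both being equal to $c$ forces $a_iM^1_{i,.}=0$, hence $a_i=0$. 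As $i$ is arbitrary $a=0$, a contradiction. Thus the two affine maps agree on an affinely spanning set, so $\tilde M^0=M^0P_{\varphi_1}^{-1}$ and $\tilde b^0=b^0$; together with \eqref{plan-peel}, and taking $\boldsymbol\varphi=(\mathrm{id},\varphi_1,\mathrm{id})$ and all $\boldsymbol\lambda$ equal to $\One$, this is $(\Mbf,\bbf)\sim(\Mbft,\bbft)$.

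\textbf{Inductive step $K\ge 3$.} Assume the statement for $K-1$ layers and perform the peeling step. Form two $(K-1)$-layer networks with architecture $(n_0,\dots,n_{K-1})$: the network $(\Mbf^{\le K-2},\bbf^{\le K-2})$, which implements $g_{K-1}$; and the network $(\Mbf',\bbf')$ with $M'^k=\tilde M^k,\ b'^k=\tilde b^k$ for $k\le K-3$, and $M'^{K-2}=\tilde M^{K-2}P_{\varphi_{K-1}}$, $b'^{K-2}=\tilde b^{K-2}$, which implements $y\mapsto\tilde g_{K-1}(P_{\varphi_{K-1}}y)$. By the peeling step these two functions coincide on $\Omega_{K-1}$. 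Both networks satisfy $\Pbf$: for the first, the relevant functions at levels $k\le K-2$ are again $g_k$ and the images of $\Omega_{K-1}$ are the original sets $\Omega_k$, so $\Pbf$ at those levels is precisely what was assumed for $(\Mbf,\bbf,\Omega,\Pibf)$; for the second, one restricts $(\Mbft,\bbft,\Omega,\Pibft)$ to levels $\le K-2$ (still satisfying $\Pbf$, with domain $\tilde\Omega_{K-1}$), then conjugates the input by $P_{\varphi_{K-1}}$, which sends $\tilde\Omega_{K-1}$ to $\Omega_{K-1}$ and preserves $\Pbf$ with the admissible list transported accordingly (a direct check in the spirit of Proposition \ref{conditions P are stable by equivalence-main}, simpler here since only a permutation at the input layer is involved). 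The induction hypothesis thus gives $(\Mbf^{\le K-2},\bbf^{\le K-2})\sim(\Mbf',\bbf')$ via data $\boldsymbol\varphi'=(\varphi'_0,\dots,\varphi'_{K-1})$ with $\varphi'_0=\varphi'_{K-1}=\mathrm{id}$ and $\boldsymbol\lambda'=(\lambda'^0,\dots,\lambda'^{K-1})$ with $\lambda'^0=\lambda'^{K-1}=\One$. Now splice: set $\varphi_k=\varphi'_k$ for $k\le K-2$, keep $\varphi_{K-1}$ from \eqref{plan-peel}, $\varphi_K=\mathrm{id}$, and $\lambda^k=\lambda'^k$ for $k\le K-1$, $\lambda^K=\One$. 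For $k\le K-3$ the sub-network identities ($\tilde M^k=M'^k$, $\tilde b^k=b'^k$) are exactly the required ones; at $k=K-2$, using $\varphi'_{K-1}=\mathrm{id}$, $\lambda'^{K-1}=\One$ and $M'^{K-2}=\tilde M^{K-2}P_{\varphi_{K-1}}$, one gets $\tilde M^{K-2}=P_{\varphi_{K-2}}\Diag(\lambda^{K-2})M^{K-2}\Diag(\lambda^{K-1})^{-1}P_{\varphi_{K-1}}^{-1}$ and $\tilde b^{K-2}=P_{\varphi_{K-2}}\Diag(\lambda^{K-2})b^{K-2}$; and at $k=K-1$, \eqref{plan-peel} is the required identity since $\lambda^{K-1}=\lambda^K=\One$. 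Hence all conditions of Definition \ref{equivalence def-main} hold and $(\Mbf,\bbf)\sim(\Mbft,\bbft)$, completing the induction.

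\textbf{Main difficulty.} The technical heart is Lemma \ref{Fundamental lemma-main} itself, which is proven separately and extracts the first-layer weights up to a permutation from the piecewise-linear geometry of the composite function — identifying the ``visible'' first-order hyperplanes among the non-differentiabilities. Within the present argument the two delicate points are the geometric claim in the base case (that $\Omega_1$ affinely spans $\RR^{n_1}$, which is where $\Pbf.a)$ and $\Pbf.b)$ at level $1$ are used) and the bookkeeping ensuring that $\Pbf$ descends to the peeled sub-networks and that the equivalence data recombine correctly at the seam $k=K-2,K-1$; the rescaling vectors $\boldsymbol\lambda$ never appear at the peeled layer precisely because of the normalisation and because Lemma \ref{Fundamental lemma-main} returns a bare permutation.
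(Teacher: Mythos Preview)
Your proposal is correct and follows essentially the same approach as the paper: normalise, induct on $K$, and peel off the first layer via Lemma~\ref{Fundamental lemma-main}. The only cosmetic differences are that in the base case you argue $\Omega_1$ affinely spans $\RR^{n_1}$ whereas the paper explicitly constructs pairs in $\Omega_1$ differing by a multiple of each canonical basis vector (these are equivalent ideas), and in the inductive step you permute the input of the $\tilde g_{K-1}$ sub-network while the paper symmetrically permutes the input of the $g_{K-1}$ sub-network.
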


\subsection{An application to risk minimization}\label{Risk minimization-sec}

We restate here the consequence of the main result in terms of minimization of the population risk, already stated as Corollary \ref{risk minimization formulation-main} in the main part.

Assume we are given a couple of input-output variables $(X,Y)$ generated by a ground truth network with parameters $(\mathbf{M}, \mathbf{b})$:
\[Y = f_{\mathbf{M,b}}(X).\]

We can use Theorem \ref{main theorem} to show that the only way to bring the population risk to $0$ is to find the ground truth parameters -modulo permutation and positive rescaling. 

Indeed, let $\Omega \subset \mathbb{R}^{n_K}$ be a domain that is contained in the support of $X$, and suppose $L : \mathbb{R}^{n_0} \times \mathbb{R}^{n_0} \rightarrow \mathbb{R}_+$ is a loss function such that $L(y,y') = 0 \Rightarrow y = y'$.
Consider the population risk:
\[R(\mathbf{\tilde{M}}, \mathbf{\tilde{b}}) = \mathbb{E}[L(f_{\mathbf{\tilde{M}},\mathbf{\tilde{b}}}(X), Y)].\]
We have the following result.

\begin{coro}\label{risk minimization formulation}
Suppose there exists a list of sets of closed polyhedra $\boldsymbol{\Pi}$ admissible with respect to $(\mathbf{M}, \mathbf{b})$ such that $(\mathbf{M}, \mathbf{b},\Omega, \boldsymbol{\Pi})$ satisfies the conditions $\mathbf{P}$. 

If $(\mathbf{\tilde{M}}, \mathbf{\tilde{b}}) $ is also such that there exists a list of sets of closed polyhedra $\boldsymbol{\tilde{\Pi}}$ admissible with respect to $(\mathbf{\tilde{M}}, \mathbf{\tilde{b}})$ such that $(\mathbf{\tilde{M}}, \mathbf{\tilde{b}},\Omega, \boldsymbol{\tilde{\Pi}})$ satisfies the conditions $\mathbf{P}$, and if $(\mathbf{M}, \mathbf{b}) \not\sim (\mathbf{\tilde{M}}, \mathbf{\tilde{b}})$, then:
\[R(\mathbf{\tilde{M}}, \mathbf{\tilde{b}}) > 0.\]
\end{coro}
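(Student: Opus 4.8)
The plan is to prove the contrapositive: assuming $R(\tilde{\mathbf{M}},\tilde{\mathbf{b}})=0$, I will show $(\mathbf{M},\mathbf{b})\sim(\tilde{\mathbf{M}},\tilde{\mathbf{b}})$, which contradicts the standing hypothesis. First, since $L\ge 0$ and $R(\tilde{\mathbf{M}},\tilde{\mathbf{b}})=\mathbb{E}[L(f_{\tilde{\mathbf{M}},\tilde{\mathbf{b}}}(X),Y)]=0$, the nonnegative random variable $L(f_{\tilde{\mathbf{M}},\tilde{\mathbf{b}}}(X),Y)$ has zero expectation and therefore vanishes almost surely; the hypothesis $L(y,y')=0\Rightarrow y=y'$ then yields $f_{\tilde{\mathbf{M}},\tilde{\mathbf{b}}}(X)=Y=f_{\mathbf{M},\mathbf{b}}(X)$ almost surely. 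Writing $N=\{x\in\mathbb{R}^{n_K}:f_{\mathbf{M},\mathbf{b}}(x)=f_{\tilde{\mathbf{M}},\tilde{\mathbf{b}}}(x)\}$, this says $\mathbb{P}(X\in N)=1$.

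The next step is to upgrade this almost-sure coincidence to coincidence on all of $\Omega$. The functions $f_{\mathbf{M},\mathbf{b}}$ and $f_{\tilde{\mathbf{M}},\tilde{\mathbf{b}}}$ are continuous piecewise linear by Proposition \ref{fk and gk are CPL} (applied with $k=0$), hence continuous by Proposition \ref{continuity of CPL functions}; consequently $N$ is closed and $U:=\mathbb{R}^{n_K}\setminus N$ is open. If there existed a point $x_0\in U\cap\Omega$, there would be $r>0$ with $B(x_0,r)\subset U$, and since $x_0\in\Omega$ lies in the support of $X$ we would have $\mathbb{P}(X\in B(x_0,r))>0$, hence $\mathbb{P}(X\in U)>0$, contradicting $\mathbb{P}(X\in N)=1$. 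Therefore $\Omega\subset N$, i.e. $f_{\mathbf{M},\mathbf{b}}(x)=f_{\tilde{\mathbf{M}},\tilde{\mathbf{b}}}(x)$ for all $x\in\Omega$. Since $(\mathbf{M},\mathbf{b},\Omega,\boldsymbol{\Pi})$ and $(\tilde{\mathbf{M}},\tilde{\mathbf{b}},\Omega,\boldsymbol{\tilde{\Pi}})$ both satisfy the conditions $\mathbf{P}$, Theorem \ref{main theorem} applies and gives $(\mathbf{M},\mathbf{b})\sim(\tilde{\mathbf{M}},\tilde{\mathbf{b}})$, contradicting the assumption $(\mathbf{M},\mathbf{b})\not\sim(\tilde{\mathbf{M}},\tilde{\mathbf{b}})$. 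Hence $R(\tilde{\mathbf{M}},\tilde{\mathbf{b}})>0$.

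The only point requiring care --- the main obstacle, such as it is --- is the passage from almost-sure equality of the two realizations to their pointwise equality on $\Omega$; it relies on the continuity of the functions implemented by ReLU networks together with the characterization of the support of $X$ as the set of points every neighborhood of which carries positive mass, and this is precisely where the hypothesis that $\Omega$ is contained in the support of $X$ is used. Everything else is a direct unwinding of the definition of the population risk followed by an invocation of Theorem \ref{main theorem}.
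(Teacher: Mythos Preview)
Your proof is correct and follows essentially the same approach as the paper's: both arguments hinge on the continuity of the ReLU network functions, the defining property of the support of $X$ (every neighborhood of a point in $\Omega$ carries positive mass), and an invocation of Theorem~\ref{main theorem}. The only cosmetic difference is the direction of the argument---the paper proceeds directly from $(\mathbf{M},\mathbf{b})\not\sim(\tilde{\mathbf{M}},\tilde{\mathbf{b}})$ to $R(\tilde{\mathbf{M}},\tilde{\mathbf{b}})>0$ via the contrapositive of Theorem~\ref{main theorem}, whereas you argue the contrapositive of the corollary itself---but the substantive steps are identical.
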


\subsection{Proof of Theorem \ref{main theorem}}\label{Proof of the main theorem-apdx}

To prove Theorem \ref{main theorem}, we can assume the parameterizations $(\mathbf{M}, \mathbf{b})$ and $(\mathbf{\tilde{M}}, \mathbf{\tilde{b}})$ are normalized. Indeed, if they are not, by Proposition \ref{equivalent normalized network} there exist a normalized parameterization $(\mathbf{M'}, \mathbf{b'})$ equivalent to $(\mathbf{M}, \mathbf{b})$ and a normalized parameterization $(\mathbf{\tilde{M}'}, \mathbf{\tilde{b}'})$ equivalent to $(\mathbf{\tilde{M}}, \mathbf{\tilde{b}})$. Note that we can apply Proposition \ref{equivalent normalized network} because $M^{k}$ and $\tilde{M}^{k}$ are full row rank (condition $\mathbf{P}.a)$) for all $k \in \llbracket 1 , K-1 \rrbracket$ so their lines are always nonzero.
We derive $\boldsymbol{\Pi}'$ from $\boldsymbol{\Pi}$ and $\boldsymbol{\tilde{\Pi}}'$ from $\boldsymbol{\tilde{\Pi}}$ as in Item \ref{relations between polyhedra} of Proposition \ref{object relations for equivalent networks}. By Proposition \ref{conditions P are stable by equivalence}, $(\mathbf{M'}, \mathbf{b'},\Omega, \boldsymbol{\Pi}')$ and $(\mathbf{\tilde{M}'}, \mathbf{\tilde{b}'}, \Omega, \boldsymbol{\tilde{\Pi}}')$ also satisfy the conditions $\mathbf{P}$. By Corollary \ref{equivalent networks have same function}, $f_{\mathbf{M'}, \mathbf{b'}} = f_{\mathbf{M}, \mathbf{b}}$ and $f_{\mathbf{\tilde{M}'}, \mathbf{\tilde{b}'}} = f_{\mathbf{\tilde{M}}, \mathbf{\tilde{b}}}$, so we have, for all $x \in \Omega$:
 \[f_{\mathbf{M'}, \mathbf{b'}}(x) = f_{\mathbf{\tilde{M}'}, \mathbf{\tilde{b}'}}(x).\]
 
$(\mathbf{M'}, \mathbf{b'}, \Omega, \boldsymbol{\Pi'})$ and $(\mathbf{\tilde{M}'}, \mathbf{\tilde{b}'}, \Omega, \boldsymbol{\tilde{\Pi}'})$ satisfy the hypotheses of Theorem \ref{main theorem}. If we are able to show that $(\mathbf{M'}, \mathbf{b'}) \sim (\mathbf{\tilde{M}'}, \mathbf{\tilde{b}'})$, then $(\mathbf{M}, \mathbf{b}) \sim (\mathbf{\tilde{M}}, \mathbf{\tilde{b}})$ follows immediately from the transitivity of the equivalence relation, proven in Proposition \ref{the relation is an equivalence relation}.

Thus in the proof $(\mathbf{M}, \mathbf{b})$ and $(\mathbf{\tilde{M}}, \mathbf{\tilde{b}})$ will be assumed to be normalized.

\vspace{15pt}

To prove the theorem, we need the following fundamental lemma (already stated as Lemma \ref{Fundamental lemma-main} in the main text), that is proven in Appendix \ref{proof of the lemma}.

\begin{lem}\label{Fundamental lemma} Let $l,m,n \in \mathbb{N}^*$. Suppose $g,\tilde{g} : \mathbb{R}^m \rightarrow \mathbb{R}^n$ are continuous piecewise linear functions, $\Omega \subset \mathbb{R}^l$ is a subset and let $M, \tilde{M} \in \mathbb{R}^{m \times l}$, $b, \tilde{b} \in \mathbb{R}^{m}$. Denote $h: x \mapsto \sigma(Mx + b)$ and $\tilde{h}: x \mapsto \sigma(\tilde{M}x + \tilde{b})$. Assume $\Pi$ and $\tilde{\Pi}$ are two sets of polyhedra admissible with respect to $g$ and $\tilde{g}$ respectively as in Definition \ref{def admissible sets of polyhedra}.

Suppose  $(g,M,b,\Omega, \Pi)$ and $(\tilde{g}, \tilde{M}, \tilde{b}, \Omega, \tilde{\Pi})$ satisfy the conditions $\mathbf{C}$, and for all $i \in \llbracket 1 , m \rrbracket$, $\|M_{i,.} \| = \| \tilde{M}_{i,.} \|= 1$.

Suppose for all $x \in \Omega$:
\[g \circ h (x) = \tilde{g} \circ \tilde{h}(x).\]

Then, there exists a permutation $\varphi \in \mathfrak{S}_m$, such that:
\begin{itemize}
\item $\tilde{M} = P_{\varphi}M $;
\item $\tilde{b} = P_{\varphi}b$;
\item $g$ and $\tilde{g} \circ P_{\varphi}$ coincide on ${h}(\Omega)$.
\end{itemize}
\end{lem}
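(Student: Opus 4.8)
The plan is to prove Lemma~\ref{Fundamental lemma} by first recovering the geometry of the first layer (the matrix $M$ and vector $b$, up to a permutation), and then pushing the functional identity forward to the hidden space. The map $g\circ h$ is continuous piecewise linear, and its non-differentiability set consists of pieces of hyperplanes. The candidates for these hyperplanes coming from the first layer are the $H_i = \{x : M_{i,.}x + b_i = 0\}$, for $i\in\llbracket 1,m\rrbracket$. Condition $\mathbf{C}.b)$ ensures each $H_i$ meets $\ring\Omega$, condition $\mathbf{C}.c)$ ensures that on at least one polyhedron touching $E_i$, the linear coefficient $V_{.,i}(D)$ is nonzero so that the kink of $\sigma$ at the $i$-th coordinate is actually \emph{visible} in $g\circ h$, and condition $\mathbf{C}.d)$ ensures that this visible kink along $H_i$ is not entirely hidden inside the pre-images $\partial h^{-1}(D)$ — i.e. that the boundary of a polyhedron of $g$ cannot mask a full hyperplane's worth of first-layer kink. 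The first main step is therefore: \emph{show that, within $\ring\Omega$, the set of points where $g\circ h$ fails to be differentiable contains, for each $i$, a relatively open dense subset of $H_i$, and contains no other full hyperplane.} Doing the same for $\tilde g\circ\tilde h$ with the hyperplanes $\tilde H_j$, and using $g\circ h = \tilde g\circ\tilde h$ on $\Omega$, one concludes that $\{H_i\}_i$ and $\{\tilde H_j\}_j$ coincide as sets of hyperplanes (restricted to the region $\ring\Omega$, which has nonempty interior by $\mathbf{C}.b)$, so a hyperplane is determined by its trace). This gives a bijection $\varphi\in\mathfrak S_m$ with $\tilde H_{\varphi(i)} = H_i$, hence $\tilde M_{\varphi(i),.}$ and $M_{i,.}$ are proportional, and $(\tilde b_{\varphi(i)})$ matched accordingly; the normalization $\|M_{i,.}\| = \|\tilde M_{j,.}\| = 1$ forces the proportionality constant to be $\pm 1$.

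The second main step is to \emph{rule out the sign $-1$} and conclude $\tilde M = P_\varphi M$, $\tilde b = P_\varphi b$ exactly. For this I would argue locally: pick a point $x_0 \in H_i\cap\ring\Omega$ that lies in the interior of a polyhedron $D\in\Pi$ with $V_{.,i}(D)\neq 0$ (possible by $\mathbf{C}.b)$, $\mathbf{C}.c)$ and $\mathbf{C}.d)$, which together guarantee such a regular point exists on $H_i$ away from all the masking boundaries and away from the other hyperplanes — this is really where $\mathbf{C}.d)$ does its work). Near $x_0$, $g\circ h$ equals $x\mapsto V(D)\sigma(Mx+b) + c(D)$ with only the $i$-th coordinate of $Mx+b$ changing sign, so locally $g\circ h(x) = A x + a + V_{.,i}(D)\,\sigma(M_{i,.}x+b_i)$ for an affine part $Ax+a$ and a nonzero vector $V_{.,i}(D)$; the one-sided directional derivatives across $H_i$ jump by $\pm V_{.,i}(D)(M_{i,.}\cdot)$. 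Computing the analogous local expansion for $\tilde g\circ\tilde h$ at the same point, using $\tilde M_{\varphi(i),.} = \pm M_{i,.}$, and comparing the jump of the gradient, the sign of the jump determines the sign; since the function is the same, the signs must agree, which forces the $+$ choice (the $-$ choice would make the kink point the ``wrong way'', i.e. would require $\tilde V_{.,\varphi(i)}(\tilde D)$ to be the negative, contradicting that the \emph{function} — not just its singular set — is unchanged; made precise, one reads off $M_{i,.} = \tilde M_{\varphi(i),.}$ and $b_i = \tilde b_{\varphi(i)}$).

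The third step is then routine: once $\tilde M = P_\varphi M$ and $\tilde b = P_\varphi b$, we get $\tilde h = \tilde\sigma\circ(\tilde M x+\tilde b) = P_\varphi\,\sigma(Mx+b) = P_\varphi\, h$ on all of $\mathbb R^l$ (using that $\sigma$ commutes with coordinate permutations), so $\tilde h(\Omega) = P_\varphi\, h(\Omega)$, and the hypothesis $g\circ h = \tilde g\circ\tilde h$ on $\Omega$ rewrites as $g(y) = \tilde g(P_\varphi y)$ for all $y\in h(\Omega)$, i.e. $g$ and $\tilde g\circ P_\varphi$ coincide on $h(\Omega)$, which is the last conclusion. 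The \textbf{main obstacle} is the first step: making rigorous the claim that, modulo a set that is ``small'' in the right sense (a finite union of hyperplanes, intersected with the masking boundaries), the non-differentiability locus of $g\circ h$ recovers exactly the first-layer hyperplanes and nothing else. This requires carefully distinguishing first-layer kinks from kinks created deeper inside $g$ (which appear bent, following $\partial h^{-1}(D)$, rather than as full hyperplanes), and exploiting condition $\mathbf{C}.d)$ precisely to guarantee that the first-layer kink survives on a dense relatively-open portion of $H_i\cap\ring\Omega$ — the remaining conditions $\mathbf{C}.a)$ (full row rank, so the $H_i$ are genuine distinct hyperplanes and $M$ is left-invertible) and $\mathbf{C}.c)$ (visibility of each coordinate's kink) are then used to pin down the correspondence and the signs.
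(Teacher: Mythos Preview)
Your plan follows essentially the same route as the paper: characterize the first-layer hyperplanes $\{H_i\}$ intrinsically via the non-differentiability locus of $f=g\circ h$ on $\ring\Omega$, match them with $\{\tilde H_j\}$ to obtain $\varphi$ and signs $\epsilon_i\in\{\pm1\}$, fix the signs by a local gradient-jump comparison, and then push the functional identity to $h(\Omega)$. Steps~1 and~3 are correct and mirror the paper's argument; the paper's clean formulation of Step~1 is $\mathcal H := \{H:\ H\cap\ring\Omega\neq\emptyset\text{ and }H\cap\ring\Omega\subset\overline X\}=\{H_i:i\in\llbracket 1,m\rrbracket\}$, where $X$ is the set of non-differentiable points of $f$ in $\ring\Omega$.

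There is, however, a real technical gap in Step~2. You ask for $x_0\in H_i\cap\ring\Omega$ lying in the interior of $h^{-1}(D)$ for some $D\in\Pi$, and then want to compute ``the analogous local expansion for $\tilde g\circ\tilde h$ at the same point'', which tacitly requires $x_0$ to \emph{also} lie in the interior of some $\tilde h^{-1}(\tilde D)$. Condition $\mathbf{C}.d)$ applied to each network separately guarantees a point of $H_i\cap\ring\Omega$ avoiding one family of boundaries, but not a \emph{common} point avoiding both; and since $H_i=\tilde H_{\varphi(i)}$ is itself typically among the facet hyperplanes of the sets $\tilde h^{-1}(\tilde D)$, a full-dimensional piece of $H_i$ may well sit inside $\bigcup_{\tilde D}\partial\tilde h^{-1}(\tilde D)$. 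The paper sidesteps this by not insisting on a single polyhedron: its Lemma~\ref{D_- and D_+} shows that at any point of $H_i\cap\ring\Omega$ avoiding only the \emph{transverse} bounding hyperplanes (those $A_k\neq H_i$ and $B_k\neq H_i$, which is achievable by a pure dimension count, independently of $\mathbf{C}.d)$), there exist $D_\pm\in\Pi$ and $\tilde D_\pm\in\tilde\Pi$ such that the two open half-balls on either side of $H_i$ lie in $h^{-1}(D_\pm)$ and $\tilde h^{-1}(\tilde D_\pm)$ respectively. Comparing the two affine expressions of $f$ on the common half-ball $B_+$ and using the linear independence of the rows $M_{j,.}$ (condition $\mathbf{C}.a)$) then forces the activations to agree, $\delta_i=\tilde\delta_{\varphi(i)}$, hence $\epsilon_i=+1$. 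Your sign idea is correct in spirit but needs this two-sided device to go through; note also that $\mathbf{C}.d)$ does its decisive work in Step~1 (ruling out spurious full hyperplanes in $\overline X$), whereas Step~2 rests on the dimension argument together with $\mathbf{C}.a)$ and $\mathbf{C}.c)$.
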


\begin{proof}[Proof of Theorem \ref{main theorem}]

We prove the theorem by induction on $K$. 

{\bf Initialization.} Assume here $K=2$. We are going to apply Lemma \ref{Fundamental lemma}. Since $(\mathbf{M}, \mathbf{b},\Omega,\boldsymbol{\Pi})$ and $(\mathbf{\tilde{M}}, \mathbf{\tilde{b}},\Omega, \boldsymbol{\tilde{\Pi}})$ satisfy the conditions $\mathbf{\mathbf{P}}$, by definition, $(g_1, M^{1}, b^{1}, \Omega_{2}, \Pi_1)$ and $(\tilde{g}_1, \tilde{M}^{1}, \tilde{b}^{1}, \Omega_{2}, \tilde{\Pi}_1)$ satisfy the conditions $\mathbf{C}$ (note that $\tilde{\Omega}_2 = \Omega_2 = \Omega$). The network is normalized, so we have, for all $i \in \llbracket 1, n_1 \rrbracket$, 
\[\|M_{i,.}^{1}\| = \| \tilde{M}_{i,.}^{1} \| = 1.\]
By the assumptions of Theorem \ref{main theorem}, for all $x \in \Omega$,
\[g_1 \circ h_1 (x) = f_{\mathbf{M,b}} (x) = f_{\mathbf{\tilde{M}, \tilde{b}}}(x) = \tilde{g}_1 \circ \tilde{h}_1(x).\]
We can thus apply Lemma \ref{Fundamental lemma}, which shows that there exists a permutation $\varphi \in \mathfrak{S}_{n_1}$ such that 
\begin{itemize}
\item $\tilde{M}^1 = P_{\varphi}M^1 $;
\item $\tilde{b}^1 = P_{\varphi}b^1$;
\item $g_1$ and $\tilde{g}_1 \circ P_{\varphi}$ coincide on ${h_1}(\Omega)$.
\end{itemize}

Recall from Definition \ref{def hyperplanes} that for all $i \in \llbracket 1, n_1 \rrbracket$, we denote \mbox{$H^2_i = \{ x \in \mathbb{R}^{n_2} , \ M^1_{i,.} x + b^1_i = 0\}.$} Let $(v_1, \dots, v_{n_1})$ be the canonical basis of $\mathbb{R}^{n_1}$. Let us show that for all $i \in \llbracket 1, n_1 \rrbracket$,
\[M^0  v_i  = \tilde{M}^0 P_{\varphi} v_i.\]
Let $i \in \llbracket 1, n_1 \rrbracket$. By $\Pbf.b)$, $H^2_i \cap \ring{\Omega} \neq \emptyset$. Since $M^1$ is full row rank by $\Pbf.a)$, none of the hyperplanes $H^2_j$, with $j \neq i$, is parallel to $H^2_i$. As a consequence, the intersections $H^2_i \cap H^2_j$ have Hausdorff dimension smaller than $n_2 - 2$, so there exists $x \in \ring{\Omega} \cap H^2_i \backslash \left(\bigcup_{j\neq i} H^2_j \right)$, and $\epsilon > 0$ such that $B(x,\epsilon) \cap H^2_j = \emptyset$ for all $j \neq i$. Let $u$ be a unit vector such that $M^1_{j,.}u = 0$ for all $j \neq i$ and $M^1_{i,.}u = \alpha > 0$ (this is possible again since $M^1$ is full row rank).

For all $j \in \llbracket 1, n_1 \rrbracket \backslash \{ i \}$, we have
\[\sigma(M^1_{j,.} (x + \epsilon u) + b^1_j) - \sigma(M^1_{j,.} x  + b^1_j) = \sigma(M^1_{j,.}x + b^1_j) - \sigma(M^1_{j,.} x  + b^1_j) = 0.\]
At the same time, we have
\begin{equation*}
\begin{aligned}
\sigma(M^1_{i,.} (x + \epsilon u) + b^1_i) - \sigma(M^1_{i,.} x  + b^1_i) & = M^1_{i,.} (x + \epsilon u) + b^1_i - M^1_{i,.} x  + b^1_i \\
& = \epsilon M^1_{i,.} u  \\
& = \epsilon \alpha.
\end{aligned}
\end{equation*}
Summarizing, 
\begin{equation*}
\begin{aligned}
h_1(x + \epsilon u)-h_1(x) &= \sigma(M^1(x + \epsilon u) + b^1) - \sigma(M^1x + b^1) \\
& = \epsilon \alpha v_i.
\end{aligned}
\end{equation*}

Let us denote $y_2 = h_1 ( x + \epsilon u) \in h_1 (\Omega)$ and $y_1 = h_1 (x) \in h_1 ( \Omega)$. We have shown $y_2 - y_1 = \epsilon \alpha v_i$, and since $g_1$ and $\tilde{g}_1 \circ P_{\varphi}$ coincide on $h_1 ( \Omega)$, we have
\begin{equation*}
\begin{aligned}
g_1 ( y_2) - g_1 (y_1) & = \tilde{g}_1 \circ P_{\varphi} ( y_2) - \tilde{g}_1 \circ P_{\varphi}(y_1) \\
\Longleftrightarrow \ M^0(y_2 - y_1)& =  \tilde{M}^0 P_{\varphi} (y_2 - y_1) \\
 \Longleftrightarrow \ \epsilon \alpha M^0 v_i  & = \epsilon \alpha \tilde{M}^0 P_{\varphi} v_i \\
\Longleftrightarrow \ M^0 v_i & = \tilde{M}^0 P_{\varphi} v_i.
\end{aligned}
\end{equation*}

Since this last equality holds for any $i \in \llbracket 1, n_1 \rrbracket$, we conclude that 
\[ M^0  = \tilde{M}^0 P_{\varphi}, \]
and using one last time that $g_1$ and $\tilde{g}_1 \circ P_{\varphi}$ coincide on $h_1(\Omega)$, we obtain 
\[b^0 = \tilde{b}^0,\]
i.e. we have shown
\[\begin{cases} \tilde{M}^0  = M^0 P_{\varphi}^{-1}\\
\tilde{b}^0 = b^0.
\end{cases}\]

Defining $P_{\varphi_1} = P_{\varphi}$, $P_{\varphi_0} = \Id_{n_0}$ and $P_{\varphi_2} = \Id_{n_2}$, we can use Proposition \ref{permuted networks} to conclude that 
\[\mathbf{(M,b)} \sim \mathbf{(\tilde{M}, \tilde{b})}.\]

{\bf Induction step.}
Let $K \geq 3
$ be an integer. Suppose Theorem \ref{main theorem} is true for all networks with $K-1$ layers.

Consider two networks with parameters $(\mathbf{M}, \mathbf{b})$ and $(\mathbf{\tilde{M}}, \mathbf{\tilde{b}})$, with $K$ layers and, for all $k \in \llbracket 0, K \rrbracket$, same number $n_k$ of neurons per layer. Let $\boldsymbol{\Pi}$ and $\boldsymbol{\tilde{\Pi}}$ be two list of sets of closed polyhedra that are admissible with respect to 
$\mathbf{(M,b)}$ and $\mathbf{(\tilde{M}, \tilde{b})}$ respectively (Definition \ref{def of an admissible list Pi}), and let $\Omega \subset \mathbb{R}^{n_K}$ such that $(\mathbf{M}, \mathbf{b}, \Omega, \boldsymbol{\Pi})$ and $(\mathbf{\tilde{M}}, \mathbf{\tilde{b}}, \Omega, \boldsymbol{\tilde{\Pi}})$ satisfy the conditions $\mathbf{P}$ and $f_{\mathbf{M}, \mathbf{b}}$ and $f_{\mathbf{\tilde{M}}, \mathbf{\tilde{b}}}$ coincide on $\Omega$.

\vspace{10pt}

Recall the functions $h_k$ and $g_k$ associated to $(\mathbf{M}, \mathbf{b})$, defined in Definition \ref{functions h} and Definition \ref{functions g} respectively, and the corresponding functions $\tilde{h}_k$ and $\tilde{g}_k$ associated to $(\mathbf{\tilde{M}}, \mathbf{\tilde{b}})$.

We have two matrices $M^{K-1}$ and $\tilde{M}^{K-1} \in \mathbb{R}^{n_{K-1} \times n_K}$, two vectors $b^{K-1}$ and $\tilde{b}^{K-1} \in \mathbb{R}^{n_{K-1}}$, two functions $g_{K-1}$ and $\tilde{g}_{K-1}:\mathbb{R}^{n_{K-1}} \rightarrow \mathbb{R}^{n_0}$, two sets $\Pi_{K-1}$ and $\tilde{\Pi}_{K-1}$ such that:
\begin{itemize}
\item $\forall x \in \Omega$, \quad $g_{K-1}\circ h_{K-1}(x) = g_K(x) = f_{\Mbf,\bbf} (x) = f_{\tilde{\Mbf}, \tilde{\bbf}} (x) = \tilde{g}_K(x) = \tilde{g}_{K-1} \circ \tilde{h}_{K-1}(x)$,
\item $g_{K-1}$ and $\tilde{g}_{K-1}$ are continuous piecewise linear, and $\Pi_{K-1}$ and $\tilde{\Pi}_{K-1}$ are admissible with respect to $g_{K-1}$ and $\tilde{g}_{K-1}$ respectively,
\item $(g_{K-1},M^{K-1}, b^{K-1}, \Omega, \Pi_{K-1})$ and $(\tilde{g}_{K-1}, \tilde{M}^{K-1}, \tilde{b}^{K-1}, \Omega, \tilde{\Pi}_{K-1})$ satisfy the conditions $\mathbf{C}$,
\item $\forall i \in \llbracket 1, n_{K-1} \rrbracket$, \quad $\| M^{K-1}_{i,.} \| = \| \tilde{M}^{K-1}_{i,.} \| = 1$.
\end{itemize}

The third point comes from the fact that the conditions $\mathbf{P}$ hold for $(\mathbf{M}, \mathbf{b},\Omega,\boldsymbol{\Pi})$ and $(\mathbf{\tilde{M}}, \mathbf{\tilde{b}},\Omega, {\boldsymbol{\tilde{\Pi}}})$, and the fourth point comes from the fact that $(\Mbf,\bbf)$ and $(\tilde{\Mbf}, \tilde{\bbf})$ are normalized.

Thus, the objects $g_{K-1}, \tilde{g}_{K-1}, M^{K-1}, b^{K-1}, \tilde{M}^{K-1}, \tilde{b}^{K-1}, \Pi_{K-1}$ and $\tilde{\Pi}_{K-1}$ satisfy the hypotheses of Lemma \ref{Fundamental lemma} and hence there exists $\varphi \in \mathfrak{S}_{n_{K-1}}$ such that 
\begin{equation}\label{tilde(M)^k-1 = P M^k-1}
\begin{cases}
\tilde{M}^{K-1} = P_{\varphi} M^{K-1}, \\
\tilde{b}^{K-1} = P_{\varphi} b^{K-1},
\end{cases}
\end{equation} and $g_{K-1}$ and $ \tilde{g}_{K-1} \circ P_{\varphi}$ coincide on $\Omega_{K-1}$.

\vspace{10pt}

Let us denote $\mathbf{M^*} = (M^{0}, \dots, M^{K-3}, M^{K-2}P_{\varphi}^{-1})$.
The functions $g_{K-1} \circ P_{\varphi}^{-1}$ and $\tilde{g}_{K-1}$ are implemented by two networks with $K-1$ layers, indexed from $K-1$ up to $0$, with parameters $(\mathbf{M^*},\mathbf{b}^{\leq K-2})$ and $(\mathbf{\tilde{M}}^{\leq K-2},\mathbf{\tilde{b}}^{\leq K-2})$ respectively. The previous paragraph shows these functions coincide on $P_{\varphi} \Omega_{K-1}$.
Recalling the definition of $\tilde{\Omega}_{K-1}$ and since, by \eqref{tilde(M)^k-1 = P M^k-1}, $\tilde{f}_{K-1} = \tilde{h}_{K-1} = P_{\varphi} h_{K-1}$, we have 
\[ \tilde{\Omega}_{K-1} = \tilde{f}_{K-1}(\Omega) = P_{\varphi} h_{K-1}(\Omega)  = P_{\varphi} \Omega_{K-1},\] 
i.e. the functions $g_{K-1} \circ P_{\varphi}^{-1} = f_{\mathbf{M^*,b^{\leq K-2}}} $ and $\tilde{g}_{K-1}= f_{\mathbf{\tilde{M}^{\leq K-2}, \tilde{b}^{\leq K-2}}}$ coincide on $\tilde{\Omega}_{K-1}$.

Since $(\mathbf{M},\mathbf{b},\Omega, \boldsymbol{\Pi})$ and $(\mathbf{\tilde{M}},\mathbf{\tilde{b}}, \Omega, \boldsymbol{\tilde{\Pi}})$ satisfy the conditions $\mathbf{P}$, $(g_k, M^{k}, b^{k}, \Omega_{k+1}, \Pi_k)$ and $(\tilde{g}_k, \tilde{M}^{k}, \tilde{b}^{k}, \tilde{\Omega}_{k+1}, \tilde{\Pi}_k)$ satisfy the conditions $\mathbf{C}$ for all $k \in \llbracket 1,  K-1 \rrbracket$ so in particular these conditions are satisfied for $k \in \llbracket 1,  K-2 \rrbracket$, so $(\mathbf{M}^{\leq K-2},\mathbf{b}^{\leq K-2},\Omega_{K-1}, \boldsymbol{\Pi}^{\leq K-2})$ and $(\mathbf{\tilde{M}}^{\leq K-2},\mathbf{\tilde{b}}^{\leq K-2}, \tilde{\Omega}_{K-1}, \boldsymbol{\tilde{\Pi}}^{\leq K-2 )})$ satisfy the conditions $\mathbf{P}$. 

Let us verify that $(\mathbf{M^*},\mathbf{b}^{\leq K-2}, \tilde{\Omega}_{K-1}, \boldsymbol{\Pi}^{\leq K-2})$ also satisfies the conditions $\mathbf{P}$.  Indeed, the only thing that differs from $(\mathbf{M}^{\leq K-2},\mathbf{b}^{\leq K-2},\Omega_{K-1}, \boldsymbol{\Pi}^{\leq K-2})$ is $\tilde{\Omega}_{K-1}$ and the weights $ M^{*K-2}$ between the layer $K-1$ and the layer $K-2$. Writing that $M^{*K-2} = M^{K-2} P_{\varphi}^{-1}$, $h^*_{K-2} = h_{K-2} \circ P_{\varphi}^{-1}$, $\tilde{\Omega}_{K-1} = P_{\varphi} \Omega_{K-1}$ and $H^{*K-1}_i = P_{\varphi} H_i^{K-1}$, let us check that the conditions $\mathbf{C}$ also hold for $ (g_{K-2}, M^{*K-2}, b^{K-2}, \tilde{\Omega}_{K-1}, \Pi_{K-2})$. 

Indeed $P_{\varphi}^{-1}$ is invertible, so $M^{*K-2}$ is full row rank and $\mathbf{C}.a)$ holds.

 If $x \in \ring{\Omega}$ satisfies $M^{K-2}_{i,.}x + b^{K-2}_i = 0$, we define $h_{K-2}^{*lin}(x) = M^{*K-2}x + b^{K-2}$, we have $h_{K-2}^{*lin} = h_{K-2}^{lin} \circ P_{\varphi}^{-1}$, so 
\[E_i \cap h_{K-2}^{*lin}(\ring{\tilde{\Omega}}_{K-1}) = E_i \cap h_{K-2}^{lin}(\ring{\Omega}_{K-1}) \neq \emptyset,\]
and $\mathbf{C}.b)$ is satisfied. 

Similarly, the observation $h_{K-2}^*(\tilde{\Omega}_{K-1}) = h_{K-2}(\Omega_{K-1})$ yields $\mathbf{C}.c)$.

Finally, assume $H^* \subset \mathbb{R}^{n_{K-1}}$ is an affine hyperplane. Let $H = P_{\varphi}^{-1} H^*$. We have by hypothesis 
\[H \cap \ring{\Omega}_{K-1} \not\subset \bigcup_{D \in \Pi_{K-2}} \partial h_{K-2}^{-1} (D),\]
thus
\begin{equation*}
\begin{aligned}
H^* \cap \ring{\tilde{\Omega}}_{K-1} & = P_{\varphi} \left( H \cap \ring{\Omega}_{K-1} \right)\\
& \not\subset  P_{\varphi}  \bigcup_{D \in \Pi_{K-2}} \partial h_{K-2}^{-1} (D)\\
& =   \bigcup_{D \in \Pi_{K-2}} \partial (P_{\varphi} h_{K-2}^{-1} (D)).
\end{aligned}
\end{equation*}
For all $D \in \Pi_{K-2}$ we have
\begin{equation*}
\begin{aligned}
P_{\varphi} h_{K-2}^{-1} (D) & =  P_{\varphi}\{ y , \ h_{K-2}(y) \in D \}\\
& = P_{\varphi}\{ P_{\varphi}^{-1} x , \ h_{K-2} \circ P_{\varphi}^{-1}(x) \in D \}\\
& = \{ x, \ h_{K-2}^*(x) \in D \}\\
& = h_{K-2}^{*-1} (D).
\end{aligned}
\end{equation*}
Therefore,
\begin{equation*}
\begin{aligned}
H^* \cap \ring{\tilde{\Omega}}_{K-1} & =   \bigcup_{D \in \Pi_{K-2}} \partial h_{K-2}^{*-1} (D),
\end{aligned}
\end{equation*}
which proves $\mathbf{C}.d)$.

Since the rest stays unchanged, we can conclude.

The induction hypothesis can thus be applied to $(\mathbf{M^*},\mathbf{b}^{\leq K-2}, \tilde{\Omega}_{K-1}, \boldsymbol{\Pi}^{\leq K-2})$ and $(\mathbf{\tilde{M}}^{\leq K-2},\mathbf{\tilde{b}}^{\leq K-2}, \tilde{\Omega}_{K-1} , \boldsymbol{\tilde{\Pi}}^{\leq K-2})$, to obtain:
\[(\mathbf{M^*},\mathbf{b}^{\leq K-2}) \sim (\mathbf{\tilde{M}}^{\leq K-2},\mathbf{\tilde{b}}^{\leq K-2}).\] 

Since we also have
\[\forall k \in \llbracket 1, K-3 \rrbracket, \ \forall i \in \llbracket 1 , n_k \rrbracket, \qquad \|M^{*k}_{i,.}\|=\|M_{i,.}^{k}\| = 1 \quad \text{and} \quad \|\tilde{M}_{i,.}^{k}\|=1,\] 
\[ \forall i \in \llbracket 1 , n_{K-2} \rrbracket, \quad \|M^{*K-2}_{i,.}\| = \|M_{i,.}^{K-2}P_{\varphi}^{-1}\|=\|M_{i,.}^{K-2}\| = 1 \quad \text{and} \quad \|\tilde{M}_{i,.}^{K-2}\| =1,\]
Proposition \ref{permuted networks} shows that there exists a family of permutations $(\varphi_0, \dots , \varphi_{K-1}) \in \mathfrak{S}_{n_0} \times \dots \times \mathfrak{S}_{n_{K-1}}$, with $\varphi_0 = id_{\llbracket 1 , n_0 \rrbracket}$ and $\varphi_{K-1} = id_{\llbracket 1 , n_{K-1} \rrbracket}$, such that:

\begin{equation}\label{eq heredite-1} \forall k \in \llbracket 0 , K-3 \rrbracket, \quad \begin{cases}\tilde{M}^{k}=  P_{\varphi_{k}} M^{*k}P_{\varphi_{k+1}}^{-1}  = P_{\varphi_{k}} M^{k}P_{\varphi_{k+1}}^{-1} \\
\tilde{b}^{k} = P_{\varphi_k} b^{k},\end{cases} 
\end{equation}
and:
\begin{equation}\label{eq heredite-2}\begin{cases}\tilde{M}^{K-2}=  P_{\varphi_{K-2}} M^{*K-2}P_{\varphi_{K-1}}^{-1} = P_{\varphi_{K-2}} (M^{K-2} P_{\varphi}^{-1})P_{\varphi_{K-1}}^{-1} = P_{\varphi_{K-2}} M^{K-2} P_{\varphi}^{-1}\\
\tilde{b}^{K-2} = P_{\varphi_{K-2}} b^{K-2}.\end{cases}
\end{equation}

We can define $(\psi_0, \dots, \psi_K) \in \mathfrak{S}_{n_0} \times \dots \times \mathfrak{S}_{n_K} $ by:
 \begin{itemize}
 \item $\psi_0 = id_{\llbracket 1, n_0 \rrbracket}$, $\psi_K = id_{\llbracket 1, n_K \rrbracket}$;
 \item $\forall k \in \llbracket 1 , K-2 \rrbracket$, $\psi_k = \varphi_k$;
 \item $\psi_{K-1} = \varphi$;
 \end{itemize}
 
and using \eqref{eq heredite-1}, \eqref{eq heredite-2} and \eqref{tilde(M)^k-1 = P M^k-1} altogether, we then have, for all $k \in \llbracket 0, K-1 \rrbracket$:
 \begin{equation*}\begin{cases}\tilde{M}^{k}= P_{\psi_{k}} M^{k}P_{\psi_{k+1}}^{-1} \\
\tilde{b}^{k} = P_{\psi_k} b^{k}.\end{cases} \end{equation*}

It follows from Proposition \ref{permuted networks} that $(\mathbf{M},\mathbf{b}) \sim (\mathbf{\tilde{M}}, \mathbf{\tilde{b}})$.

\end{proof}

\subsection{Proof of Corollary \ref{risk minimization formulation}}\label{Proof of the corollary-sec}

Theorem \ref{risk minimization formulation} is an immediate consequence of Theorem \ref{main theorem}.

Since $(\mathbf{M},\mathbf{b}, \Omega, \Pi)$ and $(\mathbf{\tilde{M}}, \mathbf{\tilde{b}}, \Omega, \tilde{\Pi})$ satisfy the conditions $\mathbf{P}$ and $(\mathbf{M},\mathbf{b}) \not\sim (\mathbf{\tilde{M}}, \mathbf{\tilde{b}})$, the contrapositive of Theorem \ref{main theorem} shows that there exists $x \in \Omega$ such that $f_{\mathbf{M},\mathbf{b}}(x) \neq f_{\tilde{\mathbf{M}}, \tilde{\mathbf{b}}}(x)$. 
The function $f_{\mathbf{M},\mathbf{b}} - f_{\tilde{\mathbf{M}}, \tilde{\mathbf{b}}}$ is continuous so there exists $r > 0$ such that for all $u \in B(x,r)$, $f_{\mathbf{M},\mathbf{b}}(u) \neq f_{\tilde{\mathbf{M}}, \tilde{\mathbf{b}}}(u)$ so $ L(f_{\mathbf{M},\mathbf{b}}(u),f_{\tilde{\mathbf{M}}, \tilde{\mathbf{b}}}(u)) > 0 $. Since $\Omega$ is included in the support of $X$ and $x \in \Omega$, denoting $\mathbb{P}_X$ the law of $X$ we have $\mathbb{P}_X(B(x,r)) > 0$ and thus
\begin{equation*}
\begin{aligned}
R(\mathbf{\tilde{M}}, \mathbf{\tilde{b}}) & = \mathbb{E}[L(f_{\mathbf{\tilde{M}},\mathbf{\tilde{b}}}(X), f_{\mathbf{M}, \mathbf{b}}(X))] \\
& \geq \int_{B(x,r)} L(f_{\mathbf{M},\mathbf{b}}(u), f_{\tilde{\mathbf{M}}, \tilde{\mathbf{b}}}(u)) \mathrm{d} \mathbb{P}_X(u) \\
& > 0.
\end{aligned}
\end{equation*}

\section{Proof of Lemma \ref{Fundamental lemma}} \label{proof of the lemma}

In this section we prove Lemma \ref{Fundamental lemma}.

Let $(g,M,b,\Omega, \Pi)$ and $(\tilde{g}, \tilde{M}, \tilde{b}, \Omega, \tilde{\Pi})$ be as in the lemma. In particular, we assume they satisfy the conditions $\mathbf{C}$ all along Appendix \ref{proof of the lemma}.

We denote, for all $x \in \mathbb{R}^l$:
\[f(x) = g\left( \sigma ( M x + b)\right) .\]

Recall that, for all $x \in \mathbb{R}^l$, $h(x) = \sigma(Mx + b)$ and $\tilde{h}(x) = \sigma(\tilde{M} x + \tilde{b})$.

Recall that, as in Definition \ref{def hyperplanes}, we define for all $i \in \llbracket 1 , m \rrbracket$ the sets $H_i = \{x \in \mathbb{R}^l \ , \ M_{i,.}x + b_i = 0 \}$ and $\tilde{H}_i = \{ x \in \mathbb{R}^l \ , \ \tilde{M}_{i,.} x + \tilde{b}_i = 0 \}$. By condition $C.a)$, for all $i \in \llbracket 1 , m \rrbracket$, $M_{i,.} \neq 0$ and $\tilde{M}_{i,.} \neq 0$ so $H_i$ and $\tilde{H}_i$ are hyperplanes.

Recall that for all $D \in \Pi$, we define $V(D) \in \mathbb{R}^{n \times m}$ and $c(D) \in \mathbb{R}^n$ as in Definition \ref{def v_i}, and similarly for all $\tilde{D} \in \tilde{\Pi}$, we define $\tilde{V}(\tilde{D}) \in \mathbb{R}^{n \times m}$ and $\tilde{c}(\tilde{D}) \in \mathbb{R}^n$ associated to $\tilde{g}$.

We now define $s : \mathbb{R}^{l} \rightarrow \{0,1 \}^{m} $ as follows:

\begin{equation}\label{definition s} \forall i \in \llbracket 1 , m \rrbracket, \quad s_i(x) := \begin{cases} 1 & \text{if }M_{i, .} x + b_i \geq 0 \\ 0 & \text{otherwise.}\end{cases}\end{equation}

We define similarly $\tilde{s}$ for $(\tilde{M}, \tilde{b})$. We thus have, for all $i \in \llbracket 1, m \rrbracket$, 
\[\sigma(M_{i,.}x + b_i) = s_i(x) (M_{i,.}x + b_i)\]
and 
\[\sigma(\tilde{M}_{i,.}x + \tilde{b}_i) = \tilde{s}_i(x) (\tilde{M}_{i,.}x + \tilde{b}_i).\]

Let $D \in \Pi$. For all $y \in D$, we have, by definition,
\[g(y)  = V(D) y + c(D),\]
thus, for all $x \in h^{-1}(D)$,
\begin{eqnarray}\label{expression of f with V}
f(x) & = & V(D)h(x)+ c(D) \nonumber \\
& = & V(D) \sigma(Mx + b) + c(D) \nonumber \\
& = & \sum_{k=1}^m V_{.,k}(D) s_k(x) (M_{k,.}x + b_k) +c(D).
\end{eqnarray}
Similarly, for all $\tilde{D} \in \tilde{\Pi}$, for all $x \in \tilde{h}^{-1}(\tilde{D})$,
\begin{equation}\label{expression of f with V-tilde}
f(x) = \sum_{k=1}^m \tilde{V}_{.,k}(\tilde{D}) \tilde{s}_k(x) (\tilde{M}_{k,.}x + \tilde{b}_k) +\tilde{c}(\tilde{D}).
\end{equation}

\begin{prop}\label{non differentiable points}
Let $D \in \Pi$. For all $i \in \llbracket 1, m \rrbracket$, for all $x \in H_i  \cap \overset{\circ}{\overbrace{h^{-1}(D)}} \cap \ring{\Omega} \backslash \left( \bigcup_{k \neq i} H_k \right) $, $f$ is not differentiable at the point $x$.
\end{prop}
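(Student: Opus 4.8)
The plan is to compare the two local affine expressions for $f$ that are valid on the two sides of the hyperplane $H_i$ and show that they cannot agree unless $f$ fails to be differentiable at $x$. Fix $D\in\Pi$ and a point $x\in H_i\cap\overset{\circ}{\overbrace{h^{-1}(D)}}\cap\ring{\Omega}\setminus\bigl(\bigcup_{k\neq i}H_k\bigr)$. First I would choose a radius $\varepsilon>0$ small enough that the ball $B(x,\varepsilon)$ is contained in $\ring{h^{-1}(D)}\cap\ring{\Omega}$ and avoids every hyperplane $H_k$ with $k\neq i$; this is possible because $h^{-1}(D)$ has $x$ in its interior, $\Omega$ has $x$ in its interior, and each $H_k$ ($k\neq i$) is a closed set not containing $x$. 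On this ball the sign functions $s_k$ are constant for all $k\neq i$, so formula \eqref{expression of f with V} shows that on $B(x,\varepsilon)$,
\[
f(y)=V_{.,i}(D)\,s_i(y)\,(M_{i,.}y+b_i)+\sum_{k\neq i}V_{.,k}(D)\,s_k(x)\,(M_{k,.}y+b_k)+c(D),
\]
where only the first term depends on $y$ through the non-affine function $s_i(y)(M_{i,.}y+b_i)=\sigma(M_{i,.}y+b_i)$.

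Next I would split $B(x,\varepsilon)$ into the half-ball $B^+=\{y: M_{i,.}y+b_i\ge 0\}$ and $B^-=\{y: M_{i,.}y+b_i< 0\}$. On $B^+$, $f$ coincides with the affine map $y\mapsto V_{.,i}(D)(M_{i,.}y+b_i)+A(y)$ where $A$ is the affine part collecting the $k\neq i$ terms and the constant; on $B^-$, $f$ coincides with $y\mapsto A(y)$. If $f$ were differentiable at $x$, then since $x\in H_i$ lies on the common boundary and $f$ agrees with a fixed affine map on each side in a neighbourhood, the two affine maps would have to have the same gradient at $x$; subtracting, this forces $V_{.,i}(D)\,M_{i,.}=0$ as a matrix (rank-one) identity on $\RR^l$. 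Since $M_{i,.}\neq 0$ by condition $\mathbf{C}.a)$ (full row rank implies no zero row), this gives $V_{.,i}(D)=0$.

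Finally I would derive the contradiction. The point $x$ satisfies $h(x)\in D$ (indeed $x\in h^{-1}(D)$) and $x\in\Omega$, and $M_{i,.}x+b_i=0$ means the $i$-th coordinate of $Mx+b$ is $0$, hence $h(x)_i=\sigma(M_{i,.}x+b_i)=0$, i.e. $h(x)\in E_i$. Therefore $E_i\cap D\cap h(\Omega)\neq\emptyset$, so condition $\mathbf{C}.c)$ forces $V_{.,i}(D)\neq 0$, contradicting the conclusion of the previous paragraph. Hence $f$ is not differentiable at $x$. The main obstacle is the differentiability argument in the second paragraph: one must argue cleanly that a function which coincides near a boundary point with two different affine functions on the two closed half-neighbourhoods can only be differentiable there if those affine functions share a gradient — this is elementary (compare directional derivatives along a direction $u$ with $M_{i,.}u>0$ and along $-u$) but needs to be stated carefully rather than waved through; everything else is bookkeeping with the formulas \eqref{expression of f with V} and the conditions $\mathbf{C}$.
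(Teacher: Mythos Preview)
Your proposal is correct and follows essentially the same approach as the paper's proof: both show that $f$ coincides with two different affine maps on the two sides of $H_i$ near $x$, whose linear parts differ by $V_{.,i}(D)M_{i,.}$, and then invoke condition $\mathbf{C}.c)$ (via $h(x)\in E_i\cap D\cap h(\Omega)$) to get $V_{.,i}(D)\neq 0$. The paper carries out the comparison concretely by computing the left and right derivatives of $t\mapsto f(x+tM_{i,.}^{\ T})$ at $t=0$ --- exactly the directional-derivative argument you sketch in your final remark --- so the two write-ups differ only in presentation.
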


\begin{proof}

 Let $i \in \llbracket 1, m \rrbracket$ and suppose $x \in  H_i  \cap \overset{\circ}{\overbrace{h^{-1}(D)}} \cap \ring{\Omega} \backslash \left( \bigcup_{k \neq i} H_k \right)$. 
Let us consider the function $t \mapsto f(x + t M_{i,.}^{ \hspace{5pt} T})$. Since $x \in H_i$ and $\|M_{i,.} \| = 1$ by hypothesis, 
\begin{equation}\label{simplification Mi(x + tMi) + bi}
M_{i, .}(x + t M_{i,.}^{ \hspace{5pt} T}) + b_i = t M_{i, .}M_{i,.}^{ \hspace{5pt} T} + M_{i, .}x + b_i = t \| M_{i, .}\|^2 = t.
\end{equation}
Given the definition of $s$ in \eqref{definition s}, we thus have
\[s_i(x + t M^{\hspace{5pt} T}_{i,.}) = \begin{cases} 1 & \text{if } t \geq 0 \\ 0 & \text{if } t < 0.\end{cases}\]

Since $x \in \overset{\circ}{\overbrace{h^{-1}(D)}}$ which is an open set, for $t$ small enough we have $x + t M_{i,.}^{ \hspace{5pt} T} \in \overset{\circ}{\overbrace{h^{-1}(D)}}$ and thus, using \eqref{expression of f with V} and \eqref{simplification Mi(x + tMi) + bi},
 
\begin{equation*}
\begin{aligned}
f(x + t M_{i,.}^{ \hspace{5pt} T}) & = \sum_{k =1}^m V_{.,k}(D) s_k(x + t M_{i,.}^{ \hspace{5pt} T}) \left( M_{k,.}( x + t M_{i,.}^{ \hspace{5pt} T}) + b_k \right) + c(D)  \\
& = \begin{cases} \sum_{k \neq i} V_{.,k}(D) s_k(x + tM_{i,.}^{ \hspace{5pt} T})\left( M_{k,.}(x + t M_{i,.}^{ \hspace{5pt} T})  + b_k \right) \\ +c(D)  + t V_{.,i}(D) & \text{ if } t \geq 0 \\
\sum_{k \neq i} V_{.,k}(D) s_k(x + t M_{i,.}^{ \hspace{5pt} T}) \left( M_{k,.}( x + t M_{i,.}^{ \hspace{5pt} T}) + b_k \right) \\ + c(D) & \text{ if } t < 0 .\end{cases}  
\end{aligned}
\end{equation*}

Since $x$ does not belong to any of the hyperplanes $H_k$ for $k \neq i$, which are closed, there exists $\epsilon > 0$ such that for all $ t \in \ ] - \epsilon, \epsilon [ $ and for all $k \neq i$, $x + tM^{\hspace{5pt} T}_{i,.} \notin H_k$. Therefore, for all $t \in \ ] - \epsilon, \epsilon [$, for all $k \in \llbracket 1 , m \rrbracket \backslash \{i\}$, $s_k(x + t M_{i,.}^{ \hspace{5pt} T}) = s_k(x)$ and 
 
 \[f(x + t M_{i,.}^{ \hspace{5pt} T}) = \begin{cases} \sum_{k \neq i} V_{.,k}(D) s_k(x)(  M_{k,.} (x + t M_{i,.}^{ \hspace{5pt} T}) + b_k) + c(D) \\ +  t V_{.,i}(D)   & \text{ if } t \geq 0 \\
\sum_{k \neq i} V_{.,k}(D) s_k(x) (  M_{k,.} (x + t M_{i,.}^{ \hspace{5pt} T})  + b_k) + c(D) & \text{ if } t < 0. \end{cases}  \]
 
The right derivative of $t \mapsto f(x + t M_{i,.}^{ \hspace{5pt} T})$ at $0$ is:

\[ \sum_{k \neq i} V_{.,k}(D) s_k(x)  M_{k, .}M_{i,.}^{ \hspace{5pt} T} +  V_{.,i}(D)  .\]

 The left derivative of $t \mapsto f(x + t M_{i,.}^{ \hspace{5pt} T})$ at $0$ is:
 
 \[\sum_{k \neq i} V_{.,k}(D) s_k(x) M_{k, .} M_{i,.}^{ \hspace{5pt} T} . \phantom{ +  V_{.,i}(D) }  \]
 
  Since $x \in H_i \cap h^{-1}(D) \cap \Omega$, we have $h(x) \in E_i \cap D \cap h(\Omega)$ so the condition $\mathbf{C}.c)$ implies that $V_{.,i}(D) \neq 0$. We conclude that the left and right derivatives at $x$ do not coincide and thus $f$ is not differentiable at $x$.
  \end{proof}

\begin{lem}\label{differentiable points} Let $D \in \Pi$. For all $x \in \overset{\circ}{\overbrace{h^{-1}(D)}}  \ \backslash \left( \bigcup_{i=1}^m H_i \right) $, there exists $r>0$ such that $f$ is differentiable on $B(x,r)$.
\end{lem}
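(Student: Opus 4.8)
The plan is to show that near such a point $x$ the function $f$ agrees with an affine function, which is differentiable everywhere; the whole difficulty is merely bookkeeping with the two open conditions defining the set in which $x$ lives.

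First I would exploit that $x$ avoids all the hyperplanes $H_1, \dots, H_m$. Each $H_i$ is a hyperplane (condition $\mathbf{C}.a)$ ensures $M_{i,.} \neq 0$), hence closed, so $\bigcup_{i=1}^m H_i$ is closed and, since $x$ does not belong to it, there exists $r_1 > 0$ with $B(x,r_1) \cap \left( \bigcup_{i=1}^m H_i \right) = \emptyset$. Simultaneously $x$ lies in the open set $\overset{\circ}{\overbrace{h^{-1}(D)}}$, so there is $r_2 > 0$ with $B(x,r_2) \subset \overset{\circ}{\overbrace{h^{-1}(D)}} \subset h^{-1}(D)$. Taking $r = \min(r_1, r_2) > 0$ gives a ball $B(x,r)$ contained in $h^{-1}(D)$ and disjoint from every $H_i$.

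Next, for each $k \in \llbracket 1, m \rrbracket$ the ball $B(x,r)$ is connected and disjoint from $H_k$, hence entirely contained in one of the two open half-spaces $\{ y : M_{k,.}y + b_k > 0 \}$ or $\{ y : M_{k,.}y + b_k < 0 \}$; in either case the map $s_k$ from \eqref{definition s} is constant on $B(x,r)$, equal to $s_k(x)$. Since $B(x,r) \subset h^{-1}(D)$, the local expression \eqref{expression of f with V} applies, and for all $y \in B(x,r)$,
\[
f(y) = \sum_{k=1}^m V_{.,k}(D)\, s_k(x)\, (M_{k,.}\, y + b_k) + c(D),
\]
which is an affine function of $y$ and therefore differentiable on all of $B(x,r)$. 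This is exactly the claim. There is no genuine obstacle here: the only step deserving a word of justification is the constancy of the $s_k$ on the ball, which is immediate from connectedness of $B(x,r)$ and its disjointness from the $H_k$.
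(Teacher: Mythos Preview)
Your proof is correct and follows essentially the same approach as the paper: find a ball contained in the interior of $h^{-1}(D)$ and disjoint from all $H_i$, observe that each $s_k$ is constant on it, and conclude via \eqref{expression of f with V} that $f$ is affine there. The only cosmetic difference is that you split the choice of radius into two separate radii $r_1, r_2$ and take their minimum, whereas the paper selects a single $r$ directly.
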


\begin{proof}
Consider $x \in \overset{\circ}{\overbrace{h^{-1}(D)}}  \ \backslash \left( \bigcup_{i=1}^m H_i \right) $. Since the hyperplanes $H_i$ are closed, there exists a ball $B(x, r) \subset \overset{\circ}{\overbrace{h^{-1}(D)}}$ such that for all $i \in \llbracket 1 , m \rrbracket$, $B(x,r) \cap H_i = \emptyset$. As a consequence, for all $y \in B(x,r)$, $s(y) = s(x)$. Using \eqref{expression of f with V} we get, for all $y \in B(x,r)$,
\[f(y) = \sum_{i=1}^m V_{.,i}(D) s_i(x) \left( M_{i, .}  y  + b_i \right) + c(D). \]

The right side of this equality is affine in the variable $y$, so $f$ is differentiable on $B(x,r)$.
\end{proof}

\begin{lem} \label{D_- and D_+}
Let $\gamma : \mathbb{R}^l \rightarrow \mathbb{R}^m$ be a continuous piecewise linear function. Let $\mathcal{P}$ be a finite set of polyhedra of $\mathbb{R}^m$ such that $\bigcup_{D \in \mathcal{P}} D = \mathbb{R}^m$. Let $A_1, \dots A_s$ be a set of hyperplanes such that $\bigcup_{D \in \mathcal{P}} \partial \gamma^{-1} (D) \subset \bigcup_{k=1}^s A_k$ (Proposition \ref{boundary included in a union of hyperplanes} shows the existence of such hyperplanes). Let $H$ be an affine hyperplane and $a \in \mathbb{R}^l, b \in \mathbb{R}$ such that $H = \{ x \in \mathbb{R}^l, a^Tx + b = 0 \}$. Denote \mbox{$I = \{ k \in \llbracket 1 , s \rrbracket, A_k = H \}$}. Let $x \in H$ such that for all $k \in \llbracket 1 , s \rrbracket \backslash I$, $x \notin A_k$. Then there exists $r>0$, $D_-$ and $D_+ \in \mathcal{P}$ (not necessarily distinct) such that 
\begin{eqnarray*}
B(x,r) \cap \{ y \in \mathbb{R}^l, a^T y + b < 0 \} \quad \subset \quad \gamma^{-1}(D_-)\phantom{.} \\
B(x,r) \cap \{ y \in \mathbb{R}^l, a^T y + b > 0 \} \quad \subset \quad \gamma^{-1}(D_+).
\end{eqnarray*}
\end{lem}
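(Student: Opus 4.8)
The plan is to localize the problem around $x$ so that, on a small ball, the only hyperplane among $A_1,\dots,A_s$ that matters is $H$ itself, and then to run a connectedness argument separately on each of the two open half-balls.

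First I would fix the radius. For each $k\in\llbracket 1,s\rrbracket\setminus I$ the hyperplane $A_k$ is closed and does not contain $x$, so there is $r_k>0$ with $B(x,r_k)\cap A_k=\emptyset$; set $r=\min_{k\notin I}r_k$ (and, say, $r=1$ if $I=\llbracket 1,s\rrbracket$). Since $A_k=H$ for every $k\in I$, this choice gives
\[
B(x,r)\cap\bigcup_{D\in\mathcal{P}}\partial\gamma^{-1}(D)\ \subset\ B(x,r)\cap\bigcup_{k=1}^{s}A_k\ \subset\ H .
\]
Introduce the two open sets $U_-=B(x,r)\cap\{y:\ a^Ty+b<0\}$ and $U_+=B(x,r)\cap\{y:\ a^Ty+b>0\}$. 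Each is convex, hence connected, and nonempty because $x\in H$ lies in the closure of both open half-spaces while $B(x,r)$ is a neighbourhood of $x$. By the inclusion above, $U_-$ and $U_+$ are disjoint from $H$, hence from $\partial\gamma^{-1}(D)$ for every $D\in\mathcal{P}$.

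Now I would fix $U\in\{U_-,U_+\}$ and show $U\subset\gamma^{-1}(D)$ for some $D\in\mathcal{P}$. Since $\gamma$ is continuous (Proposition \ref{continuity of CPL functions}) and each $D$ is closed, each $\gamma^{-1}(D)$ is closed, so $\gamma^{-1}(D)=O_D\cup\partial\gamma^{-1}(D)$ where $O_D$ denotes the interior of $\gamma^{-1}(D)$. As $U$ avoids $\partial\gamma^{-1}(D)$, we get $U\cap\gamma^{-1}(D)=U\cap O_D$, which is open; and $U\cap\gamma^{-1}(D)$ is closed in $U$ because $\gamma^{-1}(D)$ is closed. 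Thus $U\cap\gamma^{-1}(D)$ is a clopen subset of the connected set $U$, so it equals $\emptyset$ or $U$. Since $\bigcup_{D\in\mathcal{P}}D=\mathbb{R}^m$ forces $\bigcup_{D\in\mathcal{P}}\gamma^{-1}(D)=\mathbb{R}^l\supset U$ and $U\neq\emptyset$, at least one $D$ meets $U$, and for that $D$ necessarily $U\subset\gamma^{-1}(D)$. Applying this to $U_-$ and then to $U_+$ produces $D_-$ and $D_+$ satisfying the two required inclusions.

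There is no genuine obstacle in this argument: the only points requiring care are choosing $r$ so that $B(x,r)$ misses the $A_k$ with $k\notin I$, noting that each $\gamma^{-1}(D)$ is closed so that its interior--boundary decomposition is available, and observing that an open convex half-ball is connected and nonempty. The clopen/connectedness step is the conceptual heart of the proof, and the rest is bookkeeping.
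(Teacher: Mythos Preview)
Your proof is correct and follows essentially the same approach as the paper's: choose $r$ so that $B(x,r)$ avoids the $A_k$ with $k\notin I$, observe that each open half-ball is convex (hence connected) and disjoint from every $\partial\gamma^{-1}(D)$, and then run a clopen/connectedness argument to conclude that each half-ball lies in a single $\gamma^{-1}(D)$. The paper carries out the clopen step by an explicit sequence argument showing $\partial_{B_-}Y\subset\partial\gamma^{-1}(D_-)\cap B_-$, whereas you use the slightly slicker observation that $\gamma^{-1}(D)$ is closed (by continuity of $\gamma$) so $U\cap\gamma^{-1}(D)=U\cap\mathrm{int}\,\gamma^{-1}(D)$ is both open and closed in $U$; the two arguments are equivalent.
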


\begin{proof}

Let $r > 0$ such that 
\[B(x, r) \cap \left( \bigcup_{k \notin I} A_k \right) = \emptyset .\]
$B(x,r) \backslash H$ has two connected components: $B_- = B(x,r) \cap \{ y \in \mathbb{R}^l , a^Ty + b <0 \}$ and $ B_+ = B(x,r) \cap \{ y \in \mathbb{R}^l , a^Ty + b > 0 \}$. The set $B_-$ (resp. $B_+$) is convex as an intersection of two convex sets.

Since $\bigcup_{D \in \mathcal{P}} D = \mathbb{R}^m$, there exists $D_- \in \mathcal{P}$ such that $\gamma^{-1}(D_-) \cap B_- \neq \emptyset$. Let us show that 
\[B_- \ \subset \ \gamma^{-1}(D_-).\]

Indeed, $B_- \cap \left(\bigcup_{k \notin I} A_k \right) = \emptyset$ and $B_- \cap H = \emptyset$ so $B_- \cap \left(\bigcup_{k \in I} A_k \right) = \emptyset$, therefore we have
\[B_- \cap \left(\bigcup_{D \in \mathcal{P}} \partial \gamma^{-1}(D) \right) \  \subset \ B_- \cap \left( \bigcup_{k=1}^s A_k \right) = \emptyset.\] 
In particular, $B_- \cap \partial \gamma^{-1}(D_-) = \emptyset$. Let $Y = \gamma^{-1}(D_-) \cap B_-$. Let us denote by $\partial_{B_-} Y$ the topological boundary of $Y$ with respect to the topology of $B_-$. Let us show the following inclusion:
\[\partial_{B_-} Y \ \subset \  \partial \gamma^{-1}(D_-) \cap B_-.\]
Indeed, let $y \in \partial_{B_-} Y$. By definition, there exist two sequences $(u_n)$ and $(v_n)$ such that $u_n \in Y$, $v_n \in B_- \backslash Y$, and both $u_n$ and $v_n$ tend to $y$. In particular, $u_n \in  \gamma^{-1} (D_-)$ and $v_n \in \mathbb{R}^l \backslash  \gamma^{-1}(D_-)$, so $y \in  \partial \gamma^{-1}(D_-) $. Since $y \in B_-$, we have $y \in \partial \gamma^{-1}(D_-) \cap B_-$.

This shows $\partial_{B_-} Y = \emptyset $, and as a consequence $Y$ is open and closed in $B_-$. Since $B_-$ is connex and $Y$ is not empty, we conclude that $Y = B_-$, i.e. $B_- \ \subset \ \gamma^{-1}(D_-)$.

We show similarly that there exists $D_+ \in \Pi$ such that $B_+ \subset \gamma^{-1}(D_+)$.
\end{proof}

\begin{prop}There exists a bijection $\varphi \in \mathfrak{S}_m$ such that for all $i \in \llbracket 1, m \rrbracket$, $\tilde{H}_i = H_{\varphi^{-1}(i)}$.
\end{prop}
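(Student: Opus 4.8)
The plan is to prove that the two hyperplane arrangements $\{H_1,\dots,H_m\}$ and $\{\tilde H_1,\dots,\tilde H_m\}$ coincide \emph{as sets}, and then to read off $\varphi$. Two preliminary observations: by $\mathbf{C}.a)$ the rows $M_{1,.},\dots,M_{m,.}$ are linearly independent, hence pairwise non-collinear, so $H_1,\dots,H_m$ are pairwise distinct hyperplanes (likewise $\tilde H_1,\dots,\tilde H_m$); and by Proposition \ref{boundary included in a union of hyperplanes} the sets $S:=\bigcup_{D\in\Pi}\partial h^{-1}(D)$ and $\tilde S:=\bigcup_{\tilde D\in\tilde\Pi}\partial\tilde h^{-1}(\tilde D)$ are closed and each contained in a finite union of hyperplanes; moreover every point of $\ring\Omega\setminus S$ lies in the interior of some $h^{-1}(D)$, and similarly for $\tilde S,\tilde h$.

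\textbf{A thick piece of $H_i$ inside the non-differentiability set.} Fix $i$. Since $S$ is closed and $\ring\Omega$ open, $H_i\cap\ring\Omega\setminus S$ is relatively open in $H_i$, and it is non-empty by $\mathbf{C}.d)$; as the sets $H_i\cap H_k$ ($k\ne i$) have dimension $\le l-2$, the set $W_i:=(H_i\cap\ring\Omega)\setminus\bigl(S\cup\bigcup_{k\ne i}H_k\bigr)$ is a non-empty relatively open subset of $H_i$, of dimension $l-1$. For $x\in W_i$ we have $x\in\ring\Omega$, $x\notin H_k$ for $k\ne i$, and $x$ lies in the interior of some $h^{-1}(D)$; since $x\in H_i$ forces $h(x)\in E_i$, we get $h(x)\in E_i\cap D\cap h(\Omega)\ne\emptyset$, so $\mathbf{C}.c)$ gives $V_{.,i}(D)\ne 0$, and Proposition \ref{non differentiable points} shows $f$ is not differentiable at $x$. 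Thus $W_i$ is an $(l-1)$-dimensional, relatively-open-in-$H_i$ subset of the non-differentiability set of $f$.

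\textbf{Transfer to the second network and conclusion.} Using $f=\tilde g\circ\tilde h$ and Lemma \ref{differentiable points} applied to $(\tilde g,\tilde h,\tilde\Pi)$, $f$ is locally differentiable at every point of $\ring\Omega$ lying off $\bigcup_j\tilde H_j$ and in the interior of some $\tilde h^{-1}(\tilde D)$; since every point of $\ring\Omega\setminus\tilde S$ is of the latter type, we obtain $W_i\subseteq\tilde S\cup\bigcup_{j=1}^m\tilde H_j$. Now $W_i$ is a non-empty open subset of Euclidean space $H_i$ covered by the finitely many relatively closed sets $W_i\cap\tilde S$ and $W_i\cap\tilde H_j$, so (Baire) one of them contains a non-empty open subset $V$ of $H_i$. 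If $V\subseteq\tilde H_j$, then the hyperplane $H_i$ contains an $(l-1)$-dimensional open piece of the hyperplane $\tilde H_j$, forcing $H_i=\tilde H_j$, and we are done. It remains to exclude $V\subseteq\tilde S$: shrinking $V$ so that it also avoids $S$, the $H_k$ ($k\ne i$) and those $\tilde H_j$ distinct from $H_i$, one has $\tilde h$ affine near each $v\in V$ (and, via Lemma \ref{D_- and D_+}, $h$ piecewise affine with a single breakpoint locus $H_i$), while the established non-differentiability of $f$ along $V\subseteq H_i$ then forces $\tilde g$ to be non-differentiable on $\tilde h(V)\subseteq\partial\tilde D$ inside $\tilde h(\Omega)$ for a suitable $\tilde D\in\tilde\Pi$ — contradicting conditions $\mathbf{C}$ for the second network. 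Therefore each $i$ has a unique $\psi(i)$ with $H_i=\tilde H_{\psi(i)}$; the map $\psi$ is injective because the $\tilde H_j$ are pairwise distinct, and exchanging the roles of the two networks shows it is onto. Hence $\psi\in\mathfrak{S}_m$, and $\varphi:=\psi$ satisfies $\tilde H_i=H_{\varphi^{-1}(i)}$ for all $i\in\llbracket 1,m\rrbracket$.

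\textbf{Main obstacle.} The delicate step is exactly the exclusion of $V\subseteq\tilde S$ — i.e. showing that the non-differentiability locus of $f$ inside $\ring\Omega$ carries no $(l-1)$-dimensional flat piece supported on a hyperplane other than one of the $\tilde H_j$ (equivalently, other than one of the $H_i$): a priori $S$ and $\tilde S$ may contain full-dimensional chunks of hyperplanes, and ruling this out requires condition $\mathbf{C}.d)$ for the second network used together with the local affineness of $\tilde h$ off the $\tilde H_j$, the structure of $\tilde g$ on the polyhedra of $\tilde\Pi$, and the normalization $\|\tilde M_{k,.}\|=1$. All the remaining ingredients — distinctness of the hyperplanes, the non-differentiability along $H_i$ from Proposition \ref{non differentiable points}, the differentiability statement of Lemma \ref{differentiable points}, and the Baire/dimension argument producing the bijection — are routine once this point is in hand.
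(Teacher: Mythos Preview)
Your overall architecture is sound up to the point you yourself flag: the exclusion of the case $V\subseteq\tilde S$. As written, that step is a genuine gap. You assert that the non-differentiability of $f$ along $V$ forces $\tilde g$ to be non-differentiable on $\tilde h(V)$ and that this ``contradicts conditions $\mathbf{C}$ for the second network'', but no condition in $\mathbf{C}$ says anything about differentiability of $\tilde g$. It is perfectly consistent with $\mathbf{C}.a)$--$\mathbf{C}.d)$ that $\tilde g$ fails to be differentiable on large pieces of $\partial\tilde D$. Condition $\mathbf{C}.d)$ for the second network only tells you that $H_i\cap\ring\Omega\not\subset\tilde S$; it says nothing about a \emph{piece} $V\subset H_i\cap\ring\Omega$, and your Baire argument only produces such a piece. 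The ingredients you list (local affineness of $\tilde h$, the normalization, the polyhedral structure of $\tilde\Pi$) do not combine into a contradiction without an additional idea.

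The paper sidesteps this difficulty by proving a stronger intermediate statement: it shows that the \emph{entire} set $H_i\cap\ring\Omega$ lies in $\overline X$, not merely the open piece $W_i$ off $S$. For points $y\in H_i\cap\ring\Omega$ that do lie on $S$ (where your Proposition~\ref{non differentiable points} does not apply directly), the paper uses Lemma~\ref{D_- and D_+} to find polyhedra $D_-,D_+\in\Pi$ flanking $H_i$ near $y$, computes the one-sided Jacobians from \eqref{expression of f with V}, and uses $\mathbf{C}.a)$ and $\mathbf{C}.c)$ to see they differ by $V_{.,i}(D_+)M_{i,.}\ne 0$. Once $H_i\cap\ring\Omega\subset\overline X$ is known, the paper defines the intrinsic set $\mathcal H=\{H:\,H\cap\ring\Omega\ne\emptyset,\ H\cap\ring\Omega\subset\overline X\}$, and then $\mathbf{C}.d)$ (for either network) is exactly what rules out any $H\in\mathcal H$ other than the $H_i$: one picks $x\in H\cap\ring\Omega\setminus S$, where Lemma~\ref{differentiable points} localizes $\overline X$ to $\bigcup_k H_k$, and a dimension count forces $H=H_k$ for some $k$. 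Since $\mathcal H$ depends only on $f$ and $\Omega$, the same argument with tildes gives $\mathcal H=\{\tilde H_j\}$, and the bijection follows. The missing step in your outline is precisely the $D_-/D_+$ analysis that upgrades ``$W_i\subset X$'' to ``$H_i\cap\ring\Omega\subset\overline X$'', which is what lets $\mathbf{C}.d)$ do its job globally rather than on a piece.
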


\begin{proof}
We denote by $X$ the set of all points of $\ring{\Omega}$ at which $f$ is not differentiable. 
We denote by $\mathcal{G}$ the set of all hyperplanes of $\mathbb{R}^l$. We denote $\mathcal{H} = \{ H \in \mathcal{G} \ , \ H \cap \ring{\Omega} \neq \emptyset \text{ and } H \cap \ring{\Omega} \subset \overline{X} \}$. We want to show $\mathcal{H} = \{ H_i, i\in \llbracket 1 , m \rrbracket 
\}$.

Indeed, once this established, since $\mathcal{H}$ only depends on $\Omega$ and $f$, we also have $\mathcal{H} = \{ \tilde{H}_i, i \in \llbracket 1, m \rrbracket \}$, and thus $\{ H_i, i \in \llbracket 1, m \rrbracket \} = \{ \tilde{H}_i, i \in \llbracket 1, m \rrbracket \}$. Since, using $C.a)$, for all $i,j $, $i \neq j$, we have $H_i \neq H_j$ and $\tilde{H}_i \neq \tilde{H}_j$, we can conclude that there exists a permutation $\varphi \in \mathfrak{S}_m$ such that, for all $i \in \llbracket 1 , m \rrbracket$, $\tilde{H}_i = H_{\varphi^{-1}(i)}$.

\vspace{5pt}
{\bf $-$ Let us show $\mathcal{H} \subset \{ H_i, i\in \llbracket 1 , m \rrbracket \}$.}

To begin, let us show that $\overline{X} \cap \ring{\Omega} \ \subset \ \bigcup_{D \in \Pi} \partial h^{-1}(D) \cup \bigcup_{i=1}^m H_i$. Let $x \in \overline{X} \cap \ring{\Omega}$. Let $D \in \Pi$ such that $h(x) \in D$. Since $x \in \overline{X}$, there does not exist any $r>0$ such that $f$ is differentiable on $B(x,r)$. The contrapositive of Lemma \ref{differentiable points} shows that $x \notin \overset{\circ}{\overbrace{h^{-1}(D)}}  \backslash \left( \bigcup_{i=1}^m H_i \right)$, so either $x \in  \bigcup_{i=1}^m H_i$ or $x \notin \overset{\circ}{\overbrace{h^{-1}(D)}} $. In the latter case, since $x \in h^{-1}(D)$ by definition of $D$, we have $x \in h^{-1}(D) \backslash  \overset{\circ}{\overbrace{h^{-1}(D)}} \subset \partial h^{-1}(D)$.

This shows:
\begin{equation}\label{overline X}\overline{X} \cap \ring{\Omega} \quad \subset \quad \bigcup_{D \in \Pi} \partial h^{-1}(D) \cup \bigcup_{i=1}^m H_i.\end{equation}

Let $H \in \mathcal{H}$. We are going to show that there exists $i \in \llbracket 1, m \rrbracket$ such that $H = H_i$.

 We know by condition $C.d$ that $H \cap \ring{\Omega} \not\subset \bigcup_{D \in \Pi} \partial h^{-1}(D)$. Let $x \in (H \cap \ring{\Omega} )\backslash \left( \bigcup_{D \in \Pi} \partial h^{-1}(D) \right)$. The set $ \bigcup_{D \in \Pi} \partial h^{-1}(D)$ is closed, so there exists a ball 
\begin{equation}\label{ball inclusion}
B(x,r) \subset \ring{\Omega} \backslash \left( \bigcup_{D \in \Pi} \partial h^{-1}(D) \right). 
\end{equation}
By definition of $\mathcal{H}$, 
\[H \cap \ring{\Omega} \ \subset \ \overline{X} \cap \ring{\Omega},\]
 so using the fact that $ B(x,r) \subset \ring{\Omega}$ we have:

\[B(x,r) \cap H \ = \ B(x,r) \cap H \cap \ring{\Omega} \ \subset \ B(x,r) \cap \overline{X} \cap \ring{\Omega}. \]

Thus, using \eqref{overline X},
\begin{equation*}
\begin{aligned}   B(x,r) \cap H \ &  \subset \ B(x,r) \cap \overline{X} \cap \ring{\Omega}  \\
& \subset \ B(x,r) \cap \left( \bigcup_{D \in \Pi} \partial h^{-1}(D) \cup \bigcup_{i=1}^m H_i \right)  \\
&  = \ \left( B(x,r) \cap \bigcup_{D \in \Pi} \partial h^{-1}(D) \right)  \cup \left( B(x,r) \cap   \bigcup_{i=1}^m H_i   \right),
\end{aligned}
\end{equation*}
and since by \eqref{ball inclusion} the first set of the last equality is empty, we have
\begin{equation*}
\begin{aligned}
 B(x,r) \cap H \ \subset \ B(x,r) \cap   \bigcup_{i=1}^m H_i. 
\end{aligned}
\end{equation*}
Therefore,
\begin{equation*}
\begin{aligned} B(x,r) \cap H & =  \left( B(x,r) \cap H \right) \cap \left( B(x,r)\cap \bigcup_{i = 1}^m H_i\right) \\
& =  B(x,r) \cap H \cap \bigcup_{i = 1}^m H_i \\
& = B(x,r) \cap  \bigcup_{i = 1}^m \left(H \cap H_i \right).
\end{aligned}
\end{equation*}

Assume, by contradiction, that for all $i \in \llbracket 1 , m \rrbracket$ we have $H \neq H_i $. Then $H \cap H_i$ is an affine space of dimension less or equal to $l-2$ so it has Hausdorff dimension smaller or equal to $l-2$. A finite union of sets of Hausdorff dimension smaller or equal to $l-2$ has Hausdorff dimension smaller or equal to $l-2$. Thus, $B(x,r) \cap H = B(x,r) \cap \bigcup_{i = 1}^m \left(H \cap H_i \right)$ has Hausdorff dimension smaller or equal to $l-2$, which is absurd since $x \in H$ so $B(x,r) \cap H$ has Hausdorff dimension $l-1$. Hence there exists $i \in \llbracket 1 , m \rrbracket$ such that $H = H_i$.

We have shown 
\begin{equation}\label{mathcal H subset Hi}
\mathcal{H} \subset \{ H_i, i\in \llbracket 1 , m \rrbracket \}.
\end{equation}

\vspace{5pt}

{\bf $-$ Let us show $\{ H_i, i\in \llbracket 1 , m \rrbracket \} \subset \mathcal{H}$.}

Let $i \in \llbracket 1 , m \rrbracket$. Let us prove $H_i \in \mathcal{H}$.

First, by condition $C.b)$ we know that $ E_i \cap h^{lin}(\ring{\Omega}) \neq \emptyset$, so there exists $x \in \ring{\Omega}$ such that $h^{lin}(x) \in E_i$. Since $h^{lin}(x) = Mx +b$ and $E_i$ is the space of vectors whose $i^{\text{th}}$ coordinate is $0$, this is equivalent to 
\[M_{i,.}x + b_i = 0,\]
or said otherwise $x \in H_i$. This proves that $H_i \cap \ring{\Omega} \neq \emptyset$. We still need to prove $H_i \cap \ring{\Omega} \subset \overline{X}$.

Let $x \in H_i \cap \ring{\Omega} $. Let us prove $x \in \overline{X}$.

Since $M$ is full row rank, the line vectors $M_{1,.}, \dots , M_{m,.}$ are linearly independent, and thus for all $k \in \llbracket 1 , m \rrbracket\backslash \{ i \}$, $H_k \cap H_i$ has Hausdorff dimension smaller or equal to $l-2$.

Proposition \ref{boundary included in a union of hyperplanes} shows that $\bigcup_{D \in \Pi} \partial h^{-1}(D)$ is contained in a finite union of hyperplanes $\bigcup_{k=1}^s A_k$. Let \mbox{$I = \{ k \in \llbracket 1 , s \rrbracket \ , \ A_k = H_i \}.$} For all $k \in \llbracket 1 , s \rrbracket \backslash I$, $A_k \cap H_i$ is either empty, or an intersection of two non parallel hyperplanes, in both cases it is an affine space of dimension smaller than $l-2$.

Thus, 
\[H_i \cap \left( (\bigcup_{k \neq i} H_k ) \cup (\bigcup_{k \notin I} A_k) \right) \]
has Hausdorff dimension strictly smaller than $l-1$, so for any $r>0$ there exists 
\begin{equation}\label{y dans une intersection d'ensembles}
y \in B(x,r) \cap H_i \cap \ring{\Omega} \backslash \left( (\bigcup_{k \neq i} H_k ) \cup (\bigcup_{k \notin I} A_k) \right).
\end{equation}

In the rest of the proof, we show that such a $y$ is an element of $X$. Once this is established, since it is true for all $r>0$, we conclude that $x \in \overline{X}$ and therefore $H_i \in \mathcal{H}$.

If there exists $D \in \Pi$ such that $y \in \overset{\circ}{\overbrace{h^{-1}(D)}}$, then  
\[ y \in H_i  \cap \overset{\circ}{\overbrace{h^{-1}(D)}} \cap \ring{\Omega} \backslash \left( \bigcup_{k \neq i} H_k \right)\]
therefore we can use Proposition \ref{non differentiable points} to conclude that $f$ is not differentiable at $y$.
 
Otherwise we can use Lemma \ref{D_- and D_+} to find $R_1>0$, $D_-$ and $D_+ \in \Pi$ such that
\begin{eqnarray*}
B(y,R_1) \cap  \{ z \in \mathbb{R}^l, M_{i,.} z + b_i < 0 \} \quad \subset \quad h^{-1}(D_-)\phantom{.} \\
B(y,R_1) \cap \{ z \in \mathbb{R}^l, M_{i,.} z + b_i > 0  \} \quad \subset \quad h^{-1}(D_+).
\end{eqnarray*}

Since for all $j \neq i$, $y \notin H_j$ and since these hyperplanes are closed, there exists $R_2 > 0$ such that for all $j \neq i$, $B(y,R_2) \cap H_j = \emptyset$. Let $R = \min(R_1,R_2)$ and denote $B_- = B(y,R) \cap  \{ z \in \mathbb{R}^l, M_{i,.} z + b_i < 0 \} $ and $B_+ = B(y,R) \cap \{ z \in \mathbb{R}^l, M_{i,.} z + b_i > 0  \}$.

For all $z \in B_-$, using \eqref{expression of f with V} with the fact that $s_i(z) = 0$ and $s_k(z) = s_k(y)$ for all $k \neq i$, we have
\begin{equation}{\label{expression of f on B-}}f(z) = \sum_{k \neq i} V_{.,k}(D_-) s_k(y) (M_{k,.}z + b_k) +c(D_-).  \end{equation}
For all $z \in B_+$, using this time that $s_i(z) = 1$, we have
\begin{equation}\label{expression of f on B+}
f(z) = \sum_{k \neq i} V_{.,k}(D_+) s_k(y) (M_{k,.}z + b_k) +c(D_+) + V_{.,i}(D_+) (M_{i,.}z + b_i) .
\end{equation}

If $f$ was differentiable at $y$, we would derive from \eqref{expression of f on B-} the expression of the Jacobian matrix
\begin{equation}\label{expression of df on B-}
J_f(y) = \sum_{k \neq i} V_{.,k}(D_-) s_k(y) M_{k,.},\end{equation}
but we would also derive from \eqref{expression of f on B+} the expression
\begin{equation}\label{expression of df on B+}
J_f(y) = \sum_{k \neq i} V_{.,k}(D_+) s_k(y) M_{k,.} + V_{.,i}(D_+) M_{i,.},
\end{equation}
hence subtracting \eqref{expression of df on B-} to \eqref{expression of df on B+} we would find
\[\sum_{k \neq i} (V_{.,k}(D_+) -  V_{.,k}(D_-)) s_k(y) M_{k,.} + V_{.,i}(D_+) M_{i,.} = 0.\]
Since $M$ is full row rank, this would imply that $V_{.,i}(D_+) = 0$. 

However since $h^{-1}(D_+)$ is closed and contains $B_+$, we have $y \in \overline{B_+} \subset h^{-1}(D_+)$. Recalling \eqref{y dans une intersection d'ensembles} we thus have
\[y \in H_i \cap h^{-1}(D_+) \cap \ring{\Omega},\]
thus
\[h(y) \in E_i \cap D_+ \cap h(\ring{\Omega}),\]
 which shows the latter intersection is not empty. By assumption $C.c)$ this implies that $V_{.,i}(D_+) \neq 0$, which is a contradiction. Therefore $f$ is not differentiable at $y$.

As a conclusion, we have showed that for all $r>0$, there exists $y \in B(x,r)$ such that $f$ is not differentiable at $y$ and $y \in \ring{\Omega}$. In other words, $x \in \overline{X}$.

Since $x$ is arbitrary in $H_i \cap \ring{\Omega}$, we have shown that for all $i \in \llbracket 1, m \rrbracket$,
\[H_i \cap \ring{\Omega} \ \subset \ \overline{X},\]
i.e., since we have already shown that $H_i \cap \ring{\Omega} \neq \emptyset$,
\[H_i \in \mathcal{H}.\]

Finally $\{ H_i, i\in \llbracket 1 , m \rrbracket \} \subset \mathcal{H}$, and, using \eqref{mathcal H subset Hi},
\[\mathcal{H} = \{ H_i, i\in \llbracket 1 , m \rrbracket \}.\]
\end{proof}

\begin{prop}\label{relations}
For all $i \in \llbracket 1 , m \rrbracket$, there exists $\epsilon_{\varphi^{-1}(i)} \in \{ -1 , 1 \}$ such that 
\[\tilde{M}_{i,.} = \epsilon_{\varphi^{-1}(i)} M_{{\varphi^{-1}(i)},.} \qquad \text{and} \qquad \tilde{b}_i = \epsilon_{\varphi^{-1}(i)} b_{\varphi^{-1}(i)}.\]
\end{prop}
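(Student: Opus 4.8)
The plan is to deduce the proposition directly from the hyperplane identification $\tilde{H}_i = H_{\varphi^{-1}(i)}$ established in the previous proposition, together with the standing normalization hypothesis $\|M_{i,.}\| = \|\tilde{M}_{i,.}\| = 1$ from Lemma \ref{Fundamental lemma}. The only fact used beyond these is the elementary observation that an affine hyperplane of $\mathbb{R}^l$ determines its defining affine equation up to multiplication by a nonzero scalar.

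First I would fix $i \in \llbracket 1, m \rrbracket$ and write $j = \varphi^{-1}(i)$. By hypothesis $\|M_{j,.}\| = \|\tilde{M}_{i,.}\| = 1$, so $M_{j,.} \neq 0$ and $\tilde{M}_{i,.} \neq 0$; hence $H_j = \{x \in \mathbb{R}^l : M_{j,.}x + b_j = 0\}$ and $\tilde{H}_i = \{x \in \mathbb{R}^l : \tilde{M}_{i,.}x + \tilde{b}_i = 0\}$ are genuine hyperplanes with these (nontrivial) defining equations. Since $\tilde{H}_i = H_j$, these two hyperplanes have the same linear direction, which forces the row vectors $\tilde{M}_{i,.}$ and $M_{j,.}$ to be proportional, say $\tilde{M}_{i,.} = \mu_i M_{j,.}$ with $\mu_i \in \mathbb{R} \backslash \{0\}$; evaluating both defining equations at any common point $x_0 \in H_j = \tilde{H}_i$ then gives $\tilde{b}_i = -\tilde{M}_{i,.}x_0 = -\mu_i M_{j,.}x_0 = \mu_i b_j$.

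It remains to identify $\mu_i$. Taking Euclidean norms in $\tilde{M}_{i,.} = \mu_i M_{j,.}$ yields $1 = \|\tilde{M}_{i,.}\| = |\mu_i|\,\|M_{j,.}\| = |\mu_i|$, so $\mu_i \in \{-1, 1\}$. Setting $\epsilon_{\varphi^{-1}(i)} := \mu_i$ we obtain $\tilde{M}_{i,.} = \epsilon_{\varphi^{-1}(i)} M_{\varphi^{-1}(i),.}$ and $\tilde{b}_i = \epsilon_{\varphi^{-1}(i)} b_{\varphi^{-1}(i)}$, which is the claim. Since $\varphi$ is a bijection, letting $i$ run over $\llbracket 1, m \rrbracket$ makes $j = \varphi^{-1}(i)$ run over all of $\llbracket 1, m \rrbracket$, so the signs $\epsilon_j$ are consistently defined for every $j \in \llbracket 1, m \rrbracket$. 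There is essentially no obstacle in this argument; the only points meriting a line of care are that the defining equations are nontrivial (immediate from the unit-norm hypothesis) and that the sign is naturally indexed by $\varphi^{-1}(i)$ rather than by $i$, which is why the statement is phrased that way.
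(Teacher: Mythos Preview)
Your proof is correct and follows essentially the same approach as the paper: both use the equality $\tilde{H}_i = H_{\varphi^{-1}(i)}$ from the preceding proposition to conclude that the defining affine equations are proportional, and then invoke the unit-norm hypothesis to pin the scalar down to $\pm 1$. Your version is slightly more explicit in justifying why the bias terms share the same proportionality constant, but the argument is the same.
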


\begin{proof}
Let $i \in \llbracket 1 , m \rrbracket$. We know that $\tilde{H}_i = H_{\varphi^{-1}(i)}$, so the equations $ \tilde{M}_{i,.} x + \tilde{b}_i = 0 $ and $ M_{{\varphi^{-1}(i)},.} x   + b_{\varphi^{-1}(i)} = 0$ define the same hyperplanes. This is only possible if the parameters of the equation are proportional (but nonzero): there exists $\epsilon_{\varphi^{-1}(i)} \in \mathbb{R}^*$ such that $\tilde{M}_{i,.} = \epsilon_{\varphi^{-1}(i)} M_{\varphi^{-1}(i),.}$ and $\tilde{b}_i = \epsilon_{\varphi^{-1}(i)} b_{\varphi^{-1}(i)}$. But since $\|\tilde{M}_{i,.}\| = \|M_{{\varphi^{-1}(i)},.}\|=1$ by hypothesis, we necessarily have $\epsilon_{\varphi^{-1}(i)} \in \{-1,1\}$.
\end{proof}

\begin{prop}
 For all $i \in \llbracket 1 , m \rrbracket$,
 \begin{itemize}
 \item $\tilde{M}_{i,.} = M_{{\varphi^{-1}(i)},.} $;
 \item $\tilde{b}_i = b_{\varphi^{-1}(i)} $.
 \end{itemize}
\end{prop}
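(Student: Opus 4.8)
The plan is to upgrade the previous proposition, which already gives $\tilde M_{i,.}=\epsilon_{\varphi^{-1}(i)}M_{\varphi^{-1}(i),.}$ and $\tilde b_i=\epsilon_{\varphi^{-1}(i)}b_{\varphi^{-1}(i)}$ with $\epsilon_{\varphi^{-1}(i)}\in\{-1,1\}$, by ruling out the value $-1$. Fix $i$, write $j=\varphi^{-1}(i)$, suppose for contradiction that $\epsilon_j=-1$, and set $t(z)=M_{j,.}z+b_j$, so that the defining function of $\tilde H_i$ is $\tilde t(z)=\tilde M_{i,.}z+\tilde b_i=-t(z)$ (we use here that $\tilde H_i=H_j$, already established). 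The idea is to inspect $f$ on one side of this common hyperplane and to exhibit a polyhedron $D\in\Pi$ with $V_{.,j}(D)=0$ whose existence is forbidden by condition $\mathbf C.c)$.

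First I would choose a generic point $y\in H_j\cap\ring\Omega$. Using Proposition \ref{boundary included in a union of hyperplanes} to enclose $\bigcup_{D\in\Pi}\partial h^{-1}(D)$ and $\bigcup_{\tilde D\in\tilde\Pi}\partial\tilde h^{-1}(\tilde D)$ in finite unions of hyperplanes, and using $\mathbf C.a)$ (so that each $H_k$, $k\neq j$, is non-parallel to $H_j$), the subset of $H_j$ of points that either lie on some $H_k$ with $k\neq j$, or fail the hypothesis of Lemma \ref{D_- and D_+} for $(h,\Pi)$, or fail it for $(\tilde h,\tilde\Pi)$, is a finite union of affine subspaces of dimension $\le l-2$. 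Since $H_j\cap\ring\Omega$ is a nonempty relatively open subset of $H_j$ (nonempty by $\mathbf C.b)$), such a generic $y$ exists, together with a ball $B=B(y,r)\subset\ring\Omega$ and polyhedra $D_+\in\Pi$, $\tilde D_-\in\tilde\Pi$ with $B^+:=B\cap\{t>0\}\subset h^{-1}(D_+)$ and $B^+\subset\tilde h^{-1}(\tilde D_-)$; the second inclusion uses precisely that $\{t>0\}=\{\tilde t<0\}$ because $\epsilon_j=-1$. On $B$ the signs of $M_{k,.}z+b_k$ ($k\neq j$) and of $\tilde M_{k,.}z+\tilde b_k$ ($k\neq i$) are constant, equal to $s_k(y)$ and $\tilde s_k(y)$.

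The heart of the argument is to compute $f$ restricted to $B^+$ in two ways. From $h(z)\in D_+$ and $t(z)>0$ one gets $f(z)=\sum_{k\neq j}s_k(y)(M_{k,.}z+b_k)V_{.,k}(D_+)+t(z)V_{.,j}(D_+)+c(D_+)$, whose Jacobian is $\sum_{k\neq j}s_k(y)V_{.,k}(D_+)M_{k,.}+V_{.,j}(D_+)M_{j,.}$. From $\tilde h(z)\in\tilde D_-$ and the fact that $\tilde t(z)<0$ on $B^+$ — so the $i$-th neuron of the tilde network is \emph{inactive} there — one gets $f(z)=\sum_{k\neq i}\tilde s_k(y)(\tilde M_{k,.}z+\tilde b_k)\tilde V_{.,k}(\tilde D_-)+\tilde c(\tilde D_-)$, with Jacobian $\sum_{k\neq i}\tilde s_k(y)\tilde V_{.,k}(\tilde D_-)\tilde M_{k,.}$ and, crucially, \emph{no} term along $\tilde M_{i,.}=\pm M_{j,.}$. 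Equating the two (constant) Jacobians on the open set $B^+$, reindexing the tilde sum by $k=\varphi(l)$ with $l$ ranging over $\llbracket 1,m\rrbracket\setminus\{j\}$, and using Proposition \ref{relations} in the form $\tilde M_{\varphi(l),.}=\epsilon_l M_{l,.}$, the identity becomes $\sum_{l\neq j}\big(s_l(y)V_{.,l}(D_+)-\epsilon_l\tilde s_{\varphi(l)}(y)\tilde V_{.,\varphi(l)}(\tilde D_-)\big)M_{l,.}+V_{.,j}(D_+)M_{j,.}=0$; since $M$ is full row rank ($\mathbf C.a)$), the rows $M_{1,.},\dots,M_{m,.}$ are linearly independent, forcing in particular $V_{.,j}(D_+)=0$.

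Finally, $h(y)$ is a limit of points of $h(B^+)\subset D_+$ and $D_+$ is closed, so $h(y)\in D_+$; moreover $h(y)_j=\sigma(t(y))=\sigma(0)=0$, so $h(y)\in E_j$; and $h(y)\in h(\ring\Omega)\subset h(\Omega)$. Hence $E_j\cap D_+\cap h(\Omega)\neq\emptyset$, and $\mathbf C.c)$ gives $V_{.,j}(D_+)\neq 0$, contradicting $V_{.,j}(D_+)=0$. Therefore $\epsilon_{\varphi^{-1}(i)}=1$ for every $i$, which is exactly $\tilde M_{i,.}=M_{\varphi^{-1}(i),.}$ and $\tilde b_i=b_{\varphi^{-1}(i)}$. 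I expect the only delicate point to be the bookkeeping of which polyhedron and which activation pattern govern $f$ on $B^+$ for each of the two networks — specifically, noticing that it is the sign flip $\epsilon_j=-1$ that makes the $i$-th tilde neuron silent on the side $\{t>0\}$, so that its column $\tilde V_{.,i}(\tilde D_-)$ disappears and the full-row-rank argument can isolate $V_{.,j}(D_+)$ (whereas for $\epsilon_j=+1$ this term survives and the argument yields only the harmless relation $V_{.,j}(D_+)=\tilde V_{.,i}(\tilde D_+)$).
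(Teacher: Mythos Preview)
Your proof is correct and follows essentially the same route as the paper: pick a generic point on $H_j\cap\ring\Omega$, invoke Lemma \ref{D_- and D_+} for both $(h,\Pi)$ and $(\tilde h,\tilde\Pi)$, expand $f$ on the half-ball $B^+$ via both networks, and use the linear independence of the rows of $M$ together with condition $\mathbf C.c)$. The only difference is packaging: you argue by contradiction (assume $\epsilon_j=-1$, observe the $i$-th tilde neuron is then silent on $B^+$, isolate $V_{.,j}(D_+)=0$, contradict $\mathbf C.c)$), whereas the paper argues directly, obtaining $V_{.,i}(D_+)\delta_i=\tilde V_{.,\varphi(i)}(\tilde D_+)\tilde\delta_{\varphi(i)}\epsilon_i$ and then reading off the sign. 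Your contradiction version is marginally more economical in that it only appeals to $\mathbf C.c)$ for the untilded network, while the paper invokes it for both; otherwise the two arguments are the same computation viewed from slightly different angles.
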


\begin{proof}
By Proposition \ref{relations}, we know that there exists $(\epsilon_i)_{1 \leq i \leq m} \in \{ -1 , 1 \}^{m}$  such that for all $i \in \llbracket 1 , m \rrbracket$,
\begin{equation}\label{M_i = ei Mtilde_i}
\tilde{M}_{i,.} = \epsilon_{\varphi^{-1}(i)} M_{{\varphi^{-1}(i)},.} \qquad \text{ and } \qquad  \tilde{b}_i = \epsilon_{\varphi^{-1}(i)} b_{\varphi^{-1}(i)}.
\end{equation}
We need to prove that for all $i \in \llbracket 1 , m \rrbracket$,  $\epsilon_{\varphi^{-1}(i)} = 1$. 

Let $i \in \llbracket 1 , m \rrbracket$. 

Applying Proposition \ref{boundary included in a union of hyperplanes} to $h$ and $\Pi$, we see that $\bigcup_{D \in \Pi} \partial h^{-1}(D)$ is contained in a finite union of hyperplanes $\bigcup_{k=1}^s A_k$. Applying it to $\tilde{h}$ and $\tilde{\Pi}$, we see similarly that $\bigcup_{\tilde{D} \in \tilde{\Pi}} \partial \tilde{h}^{-1}(\tilde{D})$ is contained in a finite union of hyperplanes $\bigcup_{k=1}^r B_k$.

Let \mbox{$I = \{ k \in \llbracket 1 , s \rrbracket \ , \ A_k = H_i \}$} and \mbox{$J = \{ k \in \llbracket 1 , r \rrbracket \ , \ B_k = H_i \}.$} For all $k \in \llbracket 1 , s \rrbracket \backslash I$, since $A_k \neq H_i$, $A_k \cap H_i$ is either empty, or an intersection of two non parallel hyperplanes, in both cases it is an affine space of dimension smaller than $l-2$. The same applies for all $k \in \llbracket 1 , r \rrbracket \backslash J$ to $B_k \cap H_i$. For all $j \neq i$, $H_j \neq H_i$ so $H_j \cap H_i$ is also an affine space of dimension smaller than $l-2$.  Since $H_i \cap \ring{\Omega}$ is nonempty by $\mathbf{C}.b)$, we can thus find a vector
\[x \quad \in \quad \ring{\Omega} \ \cap \ H_i \ \backslash \ \left( (\bigcup_{k \notin I} A_k ) \cup (\bigcup_{k \notin J } B_k ) \cup ( \bigcup_{j \neq i} H_j ) \right) .\]

Applying Lemma \ref{D_- and D_+} with $\Pi$, $h$, $H_i$ and $(M_{i,.}, b_i)$, we find $r_1 > 0$, $D_-$ and $D_+ \in \Pi$ such that 
\begin{eqnarray}\label{application lemma 39-1}
B(x,r_1) \cap \{ y \in \mathbb{R}^l, M_{i,.} y + b_i < 0 \} \quad \subset \quad h^{-1}(D_-)\phantom{.} \nonumber \\
B(x,r_1) \cap \{ y \in \mathbb{R}^l, M_{i,.} y + b_i > 0 \} \quad \subset \quad h^{-1}(D_+).
\end{eqnarray}
Applying the same lemma with $\tilde{\Pi}$, $\tilde{h}$, $H_i$ and $(M_{i,.}, b_i)$ we find $r_2 > 0$, $\tilde{D}_-$ and $\tilde{D}_+ \in \tilde{\Pi}$ such that 
\begin{eqnarray}\label{application lemma 39-2}
B(x,r_2) \cap \{ y \in \mathbb{R}^l, M_{i,.} y + b_i < 0 \} \quad \subset \quad \tilde{h}^{-1}(\tilde{D}_-)\phantom{.} \nonumber \\
B(x,r_2) \cap \{ y \in \mathbb{R}^l, M_{i,.} y + b_i > 0 \} \quad \subset \quad \tilde{h}^{-1}(\tilde{D}_+).
\end{eqnarray}

Since the hyperplanes $H_j$ are closed, we can also find $r_3 > 0$ such that for all $j \neq i$,  $B(x,r_3) \cap H_j  = \emptyset$. Taking $r = \min(r_1,r_2,r_3)$ and denoting \mbox{$B_+ = B(x,r) \cap \{ y \in \mathbb{R}^l, M_{i,.} y + b_i > 0 \}$}, we derive from \eqref{application lemma 39-1} and \eqref{application lemma 39-2} that 
\[B_+ \quad \subset \quad h^{-1} (D_+) \cap \tilde{h}^{-1} ( \tilde{D}_+).\]

Since $r \leq r_3$, we have $B_+ \cap \left( \bigcup_{j \neq i} H_j \right) = \emptyset$, and by definition $B_+ \cap \{y \in \mathbb{R}^l, M_{i,.}y + b_i = 0 \} = \emptyset$, so $B_+ \cap H_i = \emptyset$. We have $B_+ \cap \left( \bigcup_{j =1 }^m H_j \right) = \emptyset$, so for all $j \in \llbracket 1 , m \rrbracket $, there exist $\delta_j \in \{0,1\}$ such that for all $y \in B_+$, $s_j(y) = \delta_j$. We have $\bigcup_{j=1}^m \tilde{H}_j = \bigcup_{j=1}^m H_j$ so similarly, $B_+ \cap \bigcup_{j=1}^m \tilde{H}_j = \emptyset$ and there exists $\tilde{\delta}_j \in \{0,1\}$ such that for all $j \in \llbracket 1 , m \rrbracket$, for all $y \in B_+$, $\tilde{s}_j(y) = \tilde{\delta}_j$. 

For all $y \in B_+$, we thus have, using \eqref{expression of f with V},
\[\sum_{j=1}^m V_{.,j}(D_+) \delta_j \left( M_{j,.}y + b_j \right) + c(D_+) = \sum_{j=1}^m \tilde{V}_{.,j}(\tilde{D}_+) \tilde{\delta}_j \left( \tilde{M}_{j,.}y + \tilde{b}_j \right) + \tilde{c}(\tilde{D}_+).\]
 $B_+$ is a nonempty open set so we have the equality
 \begin{eqnarray}\label{equality 2}
 \sum_{j=1}^m V_{.,j}(D_+) \delta_j  M_{j,.} & = & \sum_{j=1}^m \tilde{V}_{.,j}(\tilde{D}_+) \tilde{\delta}_j  \tilde{M}_{j,.} \nonumber \\
 & = & \sum_{j=1}^m \tilde{V}_{.,j}(\tilde{D}_+) \tilde{\delta}_j \epsilon_{\varphi^{-1}(j)} M_{{\varphi^{-1}(j)},.} \nonumber \\
 & = & \sum_{j=1}^m \tilde{V}_{.,\varphi(j)}(\tilde{D}_+) \tilde{\delta}_{\varphi(j)} \epsilon_{j} M_{j,.}.
 \end{eqnarray}
The condition $\mathbf{C}.a)$ states that $M$ is full row rank, so the vectors $M_{j,.}$ are linearly independent. Applied to \eqref{equality 2}, this information yields, for all $j \in \llbracket 1 , m \rrbracket$,
\[V_{.,j}(D_+) \delta_j =  \tilde{V}_{.,\varphi(j)}(\tilde{D}_+) \tilde{\delta}_{\varphi(j)} \epsilon_{j}, \]
and in particular,
\begin{equation}\label{equality 3}
V_{.,i}(D_+) \delta_i =  \tilde{V}_{.,\varphi(i)}(\tilde{D}_+) \tilde{\delta}_{\varphi(i)} \epsilon_{i}. \end{equation}
Since $h^{-1}(D_+)$ and $\tilde{h}^{-1}(\tilde{D}_+)$ are closed, we have
\[\overline{B_+} \ \subset \ h^{-1}(D_+) \cap \tilde{h}^{-1}(\tilde{D}_+),\]
and since $x \in \overline{B_+}$, we have $h^{-1}(D_+) \cap H_i \neq \emptyset$ and $\tilde{h}^{-1}(\tilde{D}_+) \cap H_i \neq \emptyset$. The condition $C.c)$ implies that $V_{.,i}(D_+) \neq 0$ and $\tilde{V}_{.,{\varphi(i)}}(\tilde{D}_+) \neq 0$ (recall that $H_i = \tilde{H}_{\varphi(i)}$). We also have $\epsilon_i \neq 0$, so from \eqref{equality 3} we obtain
\[\delta_i = 0 \Leftrightarrow \tilde{\delta}_{\varphi(i)} = 0.\]

By definition, the coefficient $\delta_i$ depends on the sign of $M_{i,.}y + b_i$: if $M_{i,.}y + b_i$ is positive, $\delta_i =1$ and if $M_{i,.}y + b_i$ is negative then $\delta_i = 0$ ($M_{i,.}y + b_i$ cannot be equal to zero since $y \notin H_i$). The coefficient $\tilde{\delta}_{\varphi(i)}$ depends similarly on the sign of $\tilde{M}_{{\varphi(i)},.}y + \tilde{b}_{\varphi(i)}$. Thus, $M_{i,.}y + b_i$ and $\tilde{M}_{{\varphi(i)},.}y + \tilde{b}_{\varphi(i)}$ have same sign.

Since $\epsilon_i \in \{-1,1 \}$ and
\[\tilde{M}_{{\varphi(i)},.}y + \tilde{b}_{\varphi(i)} = \epsilon_i {M}_{i,.}y + \epsilon_i {b}_i = \epsilon_i \left( {M}_{i,.}y +  {b}_i \right),\]
we conclude that $\epsilon_i=1$.

\end{proof}

We can now finish the proof of Lemma \ref{Fundamental lemma}. It results from the above that:
\[\tilde{M} = P_{\varphi}M \]
\[\tilde{b} = P_{\varphi} b.\]

We have by hypothesis, for all $x \in \Omega$,
\[\tilde{g}(\sigma(\tilde{M}x + \tilde{b})) = g(\sigma(Mx + b)) ,\]

but since $\tilde{M} = P_{\varphi}M $ and $\tilde{b} = P_{\varphi} b$ we also have:
\[\tilde{g}(\sigma(\tilde{M}x + \tilde{b})) = \tilde{g}(\sigma(P_{\varphi}Mx + P_{\varphi} b)) =  \tilde{g}(P_{\varphi}\sigma(Mx + b)).\]

Combining these, we have for all $x \in \Omega$,
\[\tilde{g} \circ P_{\varphi} (h(x))  = g(h(x)) ,\]

i.e. $\tilde{g} \circ P_{\varphi} $ and $g$ coincide on $h(\Omega)$.

\end{appendix}


\bibliographystyle{plain}
\bibliography{biblio}

\end{document}